\numberwithin{equation}{section}
\newcommand*{\LatexDef}{.}
\setlist[enumerate]{leftmargin=*,font=\upshape,align=parleft,label=(\alph*)}
\setlist[itemize]{leftmargin=*,labelwidth=*}
\DeclareSymbolFont{defaultmathcal}{OMS}{zplm}{m}{n}
\DeclareSymbolFontAlphabet{\mathcal}{defaultmathcal}
\DeclareSymbolFont{handwritten}{OMS}{rsfs}{m}{n}
\DeclareSymbolFontAlphabet{\handcal}{handwritten}
\definecolor{darkgreen}{RGB}{54,124,50}
\NewDocumentCommand{\RN}{m}
{
	\textup{ \int_to_Roman:n { #1 } }
}
\NewDocumentCommand{\rn}{m}
{
	\textup{ \int_to_roman:n { #1 } }
}
\newcommand{\N}{\mathbb{N}}
\newcommand{\0}{\mathbb{\emptyset}}
\newcommand{\w}{\infty}
\newcommand\al{\alpha}
\newcommand{\ga}{\gamma}
\newcommand\de{\delta}
\newcommand\la{\lambda}
\newcommand{\Ga}{\Gamma}
\newcommand{\De}{\Delta}
\newcommand\La{\Lambda}
\newcommand{\Si}{\Sigma}
\renewcommand{\phi}{\varphi}
\DeclareRobustCommand{\rchi}{{\mathpalette\irchi\relax}}
\newcommand{\irchi}[2]{\raisebox{\depth}{$#1\cchi$}}
\let\cchi\chi
\let\chi\rchi
\newcommand{\AC}{\mathcal{A}} 
\newcommand{\BC}{\mathcal{B}}
\newcommand{\CC}{\mathcal{C}}
\newcommand{\NC}{\mathcal{N}}
\newcommand{\SC}{\mathcal{S}}
\newcommand{\TC}{\mathcal{T}}
\newcommand{\YC}{\mathcal{Y}}
\newcommand{\ZC}{\mathcal{Z}}
\newcommand{\BF}{\mathfrak{B}}
\newcommand{\CF}{\mathfrak{C}}
\newcommand{\Aut}{\operatorname{Aut}}
\newcommand{\Pow}{\handcal{P}}
\newcommand{\Stab}{\operatorname{Stab}}
\newcommand{\del}{\partial}
\def\moverlay{\mathpalette\mov@rlay}
\def\mov@rlay#1#2{\leavevmode\vtop{%
		\baselineskip\z@skip \lineskiplimit-\maxdimen
		\ialign{\hfil$\m@th#1##$\hfil\cr#2\crcr}}}
\newcommand{\charfusion}[3][\mathord]{
	#1{\ifx#1\mathop\vphantom{#2}\fi
		\mathpalette\mov@rlay{#2\cr#3}
	}
	\ifx#1\mathop\expandafter\displaylimits\fi}
\renewcommand{\le}{\leqslant}
\renewcommand{\ge}{\geqslant}
\newcommand{\actson}{\curvearrowright}
\newcommand{\conc}{{^\smallfrown}}
\newcommand{\onto}{\twoheadrightarrow}
\newcommand{\shortiff}{\Leftrightarrow}
\newcommand{\nperp}{\charfusion[\mathrel]{\diagup}{\perp}}
\newcommand*{\defeq}{\mathrel{\vcenter{\baselineskip0.5ex \lineskiplimit0pt \hbox{\scriptsize.}\hbox{\scriptsize.}}}=}
\newcommand*{\defequiv}{\mathrel{\vcenter{\baselineskip0.5ex \lineskiplimit0pt
			\hbox{\scriptsize.}\hbox{\scriptsize.}}}\shortiff}
\DeclareMathSymbol{\lqm}{\mathord}{operators}{``}
\DeclareMathSymbol{\rqm}{\mathord}{operators}{`'}
\newcommand{\set}[1]{\left\{ #1 \right\}}
\newcommand{\rest}[1]{\mathord{|_{#1}}}
\theoremstyle{plain}
\newtheorem{theorem}[equation]{Theorem}
\newtheorem*{theorem*}{Theorem}
\def\@empty{}
\def\ifemptycredit#1{%
	\def\tmp{#1}%
	\ifx\tmp\@empty%
	\else%
	{~(#1)}%
	\fi%
}
\newenvironment{namedthm*}[2][]{
	\par\medskip\noindent \textbf{#2}\ifemptycredit{#1}\textbf{.}\itshape\xspace
}{}
\crefname{prop}{Proposition}{Propositions}
\newtheorem{prop}[equation]{Proposition}
\newtheorem*{propo*}{Proposition}
\crefname{property}{Property}{Properties}
\newtheorem*{property*}{Property}
\newtheorem{lemma}[equation]{Lemma}
\newtheorem*{lemma*}{Lemma}
\crefname{claimlemma}{Claim}{Claims}
\crefname{cor}{Corollary}{Corollaries}
\newtheorem{cor}[equation]{Corollary}
\newtheorem*{cor*}{Corollary}
\crefname{obs}{Observation}{Observations}
\newtheorem{obs}[equation]{Observation}
\newtheorem*{obs*}{Observation}
\crefname{obss}{Observations}{Observations}
\newtheorem{obss*}{Observations}
\crefname{fact}{Fact}{Facts}
\newtheorem*{fact*}{Fact}
\theoremstyle{definition}
\crefname{defn}{Definition}{Definitions}
\newtheorem{defn}[equation]{Definition}
\newtheorem*{defn*}{Definition}
\newenvironment{defn**}[1][]{\par\medskip\noindent \textbf{Definition\xspace#1.}\xspace}{}
\crefname{question}{Question}{Questions}
\newtheorem*{question*}{Question}
\crefname{conj}{Conjecture}{Conjectures}
\newtheorem*{conj*}{Conjecture}
\crefname{example}{Example}{Examples}
\newtheorem{example}[equation]{Example}
\newtheorem*{example*}{Example}
\crefname{examples.plain}{Examples}{Examples}
\newtheorem{examples.plain}[equation]{Examples}
\newtheorem*{examples.plain*}{Examples}
\theoremstyle{remark}
\crefname{remark}{Remark}{Remarks}
\newtheorem{remark}[equation]{Remark}
\newtheorem*{remark*}{Remark}
\newenvironment{remarklike*}[2][]{\par\medskip\noindent \textit{#2}#1\textbf{.}\rmfamily\xspace}{\smallskip}
\crefname{claim+}{Claim}{Claims}
\newtheorem{claim+}[equation]{Claim}
\crefname{claim}{Claim}{Claims}
\newtheorem*{claim*}{Claim}
\crefname{subclaim}{Subclaim}{Subclaims}
\newtheorem*{subclaim*}{Subclaim}
\newenvironment{case*}[1]{\smallskip\par\noindent \textit{Case}:~#1.\rmfamily}{}
\crefname{notation}{Notation}{Notations}
\newtheorem{notation}[equation]{Notation}
\newtheorem*{notation*}{Notation}
\newtheorem{terminology}[equation]{Terminology}
\newtheorem*{terminology*}{Terminology}
\crefname{convention}{Convention}{Conventions}
\newtheorem*{convention*}{Convention}
\newtheorem*{conventions*}{Conventions}
\crefname{spec}{Speculation}{Speculations}
\newtheorem*{spec*}{Speculation}
\crefname{caution}{Caution}{Cautions}
\newtheorem*{caution*}{Caution}
\crefname{hypothesis}{Hypothesis}{Hypotheses}
\newtheorem*{hypothesis*}{Hypothesis}
\crefname{assumption}{Assumption}{Assumptions}
\newtheorem*{assumption*}{Assumption}
\newcommand{\fntsz}[1][11]{\fontsize{#1}{#1}\selectfont}
\newenvironment{acknowledgements}[1][11]{\medskip \fntsz[#1]\begin{trivlist}
		\item[\hskip \labelsep {\textit{Acknowledgements}.}]}{\end{trivlist}\smallskip}
\crefname{examples}{Examples}{Examples}
\newenvironment{examples*}[1][\alph*]
{
	\refstepcounter{equation}
	\medskip
	\noindent\textbf{Examples.}
	\medskip
	\begin{enumerate}[\bfseries(\theequation.#1),ref=(\theequation.#1),itemsep=5pt]
}
{
	\end{enumerate}
	\smallskip
}
\theoremstyle{remark}
\declaretheoremstyle[
spaceabove=\topsep, 
spacebelow=6pt,
headfont=\normalfont\itshape,
notefont=\normalfont, notebraces={(}{)},
bodyfont=\normalfont,
postheadspace=4pt,
qed=\mbox{\smaller[4]$\boxtimes$}
]{claimproofstyle}
\declaretheorem[name={Proof of Claim}, style=claimproofstyle, unnumbered]{pf}
\crefname{subsection}{Subsection}{Subsections}
\theoremstyle{plain}
\newmdenv[
leftmargin = 1cm,
rightmargin = 0pt,
skipabove = 8pt,
skipbelow = 3pt,
innerleftmargin = 8pt,
innertopmargin = 0pt,
innerbottommargin = 0pt,
innerrightmargin = 0pt,
linewidth = 3pt,
topline = false,
rightline = false,
bottomline = false
]{leftbar}
\definecolor{gris}{RGB}{90,90,90}
\definecolor{vert}{RGB}{7,126,26}
\definecolor{purple}{RGB}{116,0,159}
\def\@settitle{\begin{center}%
		\baselineskip14\p@\relax
		\bfseries
		\uppercasenonmath\@title
		\@title
		\ifx\@subtitle\@empty\else
		\\[1ex]\uppercasenonmath\@subtitle
		\footnotesize\mdseries\@subtitle
		\fi
	\end{center}%
}
\def\subtitle#1{\gdef\@subtitle{#1}}
\def\@subtitle{}
\def\l@section{\@tocline{1}{5pt}{0pc}{}{}}
\renewcommand{\tocsection}[3]{%
	\indentlabel{\@ifnotempty{#2}{\makebox[20pt][l]{%
				\ignorespaces#1 #2.\hfill}}}\sc #3\dotfill}
\newdimen{\tocsubsecmarg}
\def\l@subsection{\@tocline{2}{3pt}{0pc}{\tocsubsecmarg}{}}
\renewcommand{\tocsubsection}[3]{%
	\indentlabel{\@ifnotempty{#2}{\makebox[30pt][l]{%
				\ignorespaces#1 #2.\hfill}}}#3\dotfill}
\let\oldtocsubsection=\tocsubsection
\renewcommand{\tocsubsection}[2]{\hspace{3em} \oldtocsubsection{#1}{#2}}
\newcommand{\stoptocwriting}{%
	\addtocontents{toc}{\protect\setcounter{tocdepth}{-5}}}
\newcommand{\resumetocwriting}{%
	\addtocontents{toc}{\protect\setcounter{tocdepth}{\arabic{tocdepth}}}}
\title{A descriptive construction of trees and Stallings' theorem}
\author{Anush Tserunyan}
\address[Anush Tserunyan]{Department of Mathematics, University of Illinois at Urbana-Champaign, IL, 61801, USA}
\email{anush@illinois.edu}
\thanks{The author's research was partially supported by NSF Grant DMS-1501036.}
\newcommand{\edgec}[2][\SC]{ \big( [#2]_{#1}, [#2^c]_{#1} \big) }
\newcommand{\edgeC}[2][\SC]{ \big( [#2]_{#1}, [(#2)^c]_{#1} \big) }
\newcommand{\bd}{\del}
\newcommand{\cobd}{\de}
\newcommand{\Cthin}{\hat{\CC}}
\newcommand{\NNG}{\NC}
\newcommand{\NNGk}[1][k]{\NNG_k}
\newcommand{\nd}[1][k]{{d_{#1}}}
\begin{document}

\begin{abstract}
	We give a descriptive construction of trees for multi-ended graphs, which yields yet another proof of Stallings' theorem on ends of groups. Even though our proof is, in principle, not very different from already existing proofs and it draws ideas from \cite{Kron:Stallings}, it is written in a way that easily adapts to the setting of countable Borel equivalence relations, leading to a free decomposition result and a sufficient condition for treeability.
\end{abstract}

\maketitle

\tableofcontents

\section{Introduction}

Stallings' theorem on ends of groups equates (as most impressive results in geometric group theory do) a geometric property of the Cayley graph of a finitely generated (f.g.) group with a structural/algebraic characterization of the group. Call a f.g. group $\Ga$ \emph{multi-ended} if for some (equivalently, any) finite generating subset $F \subseteq \Ga$, the Cayley graph $G$ induced by $F$ has more than one \emph{end}, that is: there is a finite set of edges of $G$, removing which results in at least two infinite connected components. We say that a group $\Ga$ \emph{splits} over its subgroup $\De < \Ga$ if either $\Ga = \ast_\De K$ (HNN-extension) or $\Ga = K \ast_\De \La$ (amalgamated product). The following theorem was proven in \cite{Stallings:ends:torsion-free} for torsion-free groups and in \cite{Stallings:ends:general} for general f.g. groups.

\begin{theorem}[Stallings 1968--71]
	A f.g. group $\Ga$ is multi-ended if and only if it splits over a finite subgroup $\De < \Ga$.
\end{theorem}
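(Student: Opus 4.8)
The plan is to prove the two implications separately; only the converse is substantial.

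\emph{Splitting over a finite subgroup $\Rightarrow$ multi-ended.} A splitting of $\Gamma$ over a finite subgroup $\Delta$ is exactly an action of $\Gamma$ on a tree $T$ without inversions, with no global fixed vertex, with $T/\Gamma$ finite, and with an edge stabilizer $\Delta$. Fixing an edge $e$ of $T$ and a base vertex $v_{0}$, the set $A$ of those $g \in \Gamma$ for which $g v_{0}$ lies in a fixed one of the two half-trees of $T \setminus e$ has finite edge-boundary in the Cayley graph of $\Gamma$ — an edge $\{g, gf\}$ crosses $A$ only if $e$ lies on the $T$-geodesic from $g v_{0}$ to $g f v_{0}$, and there are boundedly many such $g$ modulo $\Delta$ — while $A$ and its complement are both infinite because each half-tree of $T \setminus e$ contains a ray. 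Hence the Cayley graph has more than one end. This direction is routine and I will say no more about it.

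\emph{Multi-ended $\Rightarrow$ splitting over a finite subgroup.} Following the ideas of \cite{Kron:Stallings}, fix a finite symmetric generating set and let $G$ be the associated Cayley graph, on which $\Gamma$ acts freely and vertex-transitively with finitely many edge-orbits. Call a set of vertices $A$ a \emph{cut} if its edge-boundary $\partial A$ is finite while both $A$ and $A^{c}$ are infinite; that $\Gamma$ is multi-ended says precisely that cuts exist. The heart of the proof is to produce a nonempty, $\Gamma$-invariant family $\mathcal{T}$ of cuts that is \emph{nested} — for all $A, B \in \mathcal{T}$ one of the corners $A \cap B$, $A \setminus B$, $B \setminus A$, $A^{c} \cap B^{c}$ contains no member of $\mathcal{T}$, equivalently the cuts of $\mathcal{T}$ can be oriented simultaneously and consistently — and which in addition has only finitely many $\Gamma$-orbits and satisfies the \emph{finite interval condition} that between any two nested members of $\mathcal{T}$ there lie only finitely many. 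The starting observation is the submodular inequality $\lvert \partial(A \cap B)\rvert + \lvert \partial(A \cup B)\rvert \le \lvert \partial A\rvert + \lvert \partial B\rvert$, which shows that the minimum value $k$ of $\lvert \partial A\rvert$ over cuts is attained and that a pair of crossing minimal cuts can be replaced by a nested pair; Krön's notion of a \emph{tight} (optimally nested) cut refines this into genuine nestedness of a $\Gamma$-canonical subfamily, and tightness also forces the boundaries of these cuts to have uniformly bounded diameter. The last point gives the finiteness of $\Gamma$-orbits, since a cut is determined by its edge-boundary up to complementation ($G$ being connected, a vertex set with empty edge-boundary is $\emptyset$ or everything).

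Given such a $\mathcal{T}$, Dunwoody's structure-tree construction yields a tree $T$ whose edges are the unoriented cuts of $\mathcal{T}$ and whose vertices are the consistent orientations of $\mathcal{T}$, two of them adjacent when they differ on exactly one cut; nestedness together with the finite interval condition is exactly what makes $T$ a tree, and the $\Gamma$-action on $\mathcal{T}$ induces one on $T$ with finitely many edge-orbits and at least one edge. The stabilizer of the edge of $T$ given by $\{A, A^{c}\}$ preserves the finite set $\partial A$, hence a nonempty finite set of vertices of $G$, and is therefore finite because $\Gamma$ acts freely on the vertices. Moreover $\Gamma$ fixes no vertex of $T$: otherwise every $g \in \Gamma$ would send a given $A \in \mathcal{T}$ to $A$ or to $A^{c}$, making $\partial A$ a nonempty finite $\Gamma$-invariant set of edges — impossible since $\Gamma$ is infinite and acts freely, so that every edge-orbit is infinite; hence some $g$ sends $A$ to a member of $\mathcal{T}$ distinct from both $A$ and $A^{c}$, which by nestedness is strictly contained in $A$ (after possibly replacing $A$ by $A^{c}$ and $g$ by $g^{-1}$), yielding the infinite strictly decreasing chain $\dots \subsetneq g^{2}A \subsetneq gA \subsetneq A$ and hence a hyperbolic element of the $T$-action, which is incompatible with a fixed vertex. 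Passing to the barycentric subdivision of $T$ to make the action inversion-free — each edge stabilizer of the subdivision is contained in one of $T$, hence still finite — Bass--Serre theory now expresses $\Gamma$ as the fundamental group of a finite graph of groups with at least one edge and with finite edge groups; since $\Gamma$ has no fixed vertex, this decomposition is nontrivial, so $\Gamma$ splits over a finite subgroup.

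\emph{Main obstacle.} The construction of $\mathcal{T}$ carries all the classical content of Stallings' theorem. Submodularity by itself only lets one uncross a crossing pair of minimal cuts; isolating a genuinely nested $\Gamma$-invariant subfamily with the right finiteness and locality properties — and doing so canonically enough that the construction will survive verbatim in the Borel-equivalence-relation setting the paper is aiming at — is the delicate step, and is exactly where the ideas borrowed from \cite{Kron:Stallings} do their work.
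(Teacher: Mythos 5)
Your overall architecture (Kr\"on-style nested invariant family of cuts, a structure tree on it, then Bass--Serre) is the same as the paper's, and the forward direction is fine, but two of the three hard steps are not actually carried out and one contains a concrete error. First, the construction of the nested $\Gamma$-invariant family is the substance of the theorem, and you only assert it, attributing it to Kr\"on's ``tight'' cuts together with unproved side claims (uniformly bounded diameter of boundaries, the finite interval condition); you say yourself this is the main obstacle. The paper does this step in full: among thin cuts it takes those of minimal non-nestedness degree and proves, via the corner/submodularity computation, that this subfamily is nested, invariant, and chain-vanishing. (Your finite interval condition is in fact provable from the lemma that an edge lies in only finitely many minimal edge-cuts of a given size, but it has to be said.) Second, your tree is not well defined as stated: taking \emph{all} consistent orientations as vertices, with adjacency ``differ on exactly one cut,'' yields a disconnected graph in general, since an orientation pointing towards an end differs from every other orientation on infinitely many cuts and is an isolated vertex. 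Dunwoody's construction restricts to orientations satisfying an extra finiteness condition exactly to rule this out; the paper instead takes as vertices the classes $[A]$ of maximally orthogonal cuts and proves connectedness using chain-vanishing. This is precisely the kind of connectivity defect the paper points out in Kr\"on's own tree construction, so it cannot be waved through.

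The concrete error is in your fixed-point-freeness argument. You claim that if $gA \notin \{A, A^{c}\}$ then, after possibly replacing $A$ by $A^{c}$ and $g$ by $g^{-1}$, nestedness forces $gA \subsetneq A$, producing a hyperbolic element. This trichotomy is false: the empty corner may be $A \cap gA$ (or $A^{c} \cap (gA)^{c}$), i.e.\ $gA$ is disjoint from $A$ without being $A^{c}$, and these two cases are stable under both replacements (complementing $A$ swaps them, inverting $g$ preserves them), so no strictly monotone orbit $g^{n}A$ is obtained. Such a $g$ can perfectly well be elliptic (think of a reflection of a line sending a half-tree to a disjoint half-tree pointing the other way), so no contradiction with a global fixed vertex follows from it. The paper's argument avoids this by working per vertex of its tree and using transitivity of $\Gamma$ on the vertices of $G$: choose $x \in \partial A$ and $y \in A \setminus \partial A$ and $\gamma$ with $\gamma x = y$; then $\gamma A \neq A$ and $\gamma A \cap A \neq \emptyset$, which is incompatible with $\gamma$ fixing the class $[A]$, since every member of $[A]$ other than $A$ is disjoint from $A$. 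You would need either this argument or a correct treatment of the disjoint/co-disjoint cases before the Bass--Serre step can be invoked.
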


There are many proofs of this theorem and in this paper, we give yet another one, which, however, is adaptable to the Borel context when working with countable\footnote{An equivalence relation is said to be \emph{countable} if each of its classes is countable.} Borel equivalence relations on standard Borel spaces.

The main statement we prove is the following structural result, which implies Stallings' theorem via Bass--Serre theory:

\begin{theorem}\label{intro:action_on_multiended_graph==>action_on_tree}
	If a group $\Ga$ admits a transitive action on a connected multi-ended graph $G$ with all vertex-stabilizers being finite, then it also admits an action on a tree with finite edge-stabilizers and without fixed points.
\end{theorem}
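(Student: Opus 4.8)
The plan is to build the required tree directly from the \emph{cuts} of the graph $G$. Call a set of vertices $A \subseteq V(G)$ a \emph{cut} if its edge-boundary $\partial A$ --- the set of edges of $G$ with exactly one endpoint in $A$ --- is finite while both $A$ and its complement $A^c$ are infinite; since $G$ is connected, $\partial A = \partial(A^c)$ is then automatically nonempty. The hypothesis that $G$ is multi-ended says exactly that at least one cut exists, and because $\Gamma$ acts by automorphisms it permutes the cuts and preserves $\lvert \partial A \rvert$; hence the set $\mathcal{C}$ of all cuts, and each of the sets $\mathcal{C}_n = \{ A \in \mathcal{C} : \lvert \partial A \rvert = n \}$, is $\Gamma$-invariant. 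Once I have a nonempty \emph{nested} $\Gamma$-invariant subfamily $\mathcal{T} \subseteq \mathcal{C}$, closed under complementation --- nested meaning that for any two members $A, B$ at least one of the four corners $A \cap B$, $A \cap B^c$, $A^c \cap B$, $A^c \cap B^c$ is finite --- the tree writes itself in the standard Dicks--Dunwoody fashion: its edges are the unordered pairs $\{A, A^c\}$ with $A \in \mathcal{T}$, its vertices are suitable equivalence classes of consistent orientations of $\mathcal{T}$ (equivalently, of the ``sides'' of the cuts), an edge joins the two classes determined by $A$ and by $A^c$, and nestedness is precisely what makes the resulting graph $T$ connected and acyclic. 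The $\Gamma$-action on $\mathcal{T}$ then descends to an action on $T$ by automorphisms.

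The real work --- and the step I expect to be the main obstacle --- is extracting that nested $\Gamma$-invariant family out of $\mathcal{C}$, and here I would follow the Dunwoody--Krön strategy. Let $d$ be the least value of $\lvert \partial A \rvert$ over all cuts $A$ (finite, since some cut exists), and call a cut \emph{thin} if it realizes this minimum; the set of thin cuts is $\Gamma$-invariant and nonempty but in general not nested. The engine driving everything is submodularity of the boundary: if two thin cuts $A, B$ \emph{cross}, i.e.\ all four corners are infinite, then from $\lvert \partial (A \cap B) \rvert + \lvert \partial (A \cup B) \rvert \le \lvert \partial A \rvert + \lvert \partial B \rvert = 2d$ one concludes that both $A \cap B$ and $A^c \cap B^c$ are again thin cuts, and symmetrically $A \cap B^c$ and $A^c \cap B$ are thin along the other diagonal. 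Combined with a bound forcing any thin cut that crosses a fixed thin cut $A$ to keep its boundary ``near'' the finite set $\partial A$, this rigidity makes each thin cut cross only finitely many others, and this finiteness is exactly what lets a Dunwoody-type uncrossing --- repeatedly replacing a crossing pair by a pair of its corners until no crossings remain, while checking the procedure is canonical enough to stay $\Gamma$-invariant, and if necessary iterating the construction on the finitely many ``blocks'' produced at each stage --- terminate and output the sought $\mathcal{T}$. I expect this finiteness-of-crossings estimate, and the bookkeeping that keeps the uncrossing $\Gamma$-equivariant, to be the technical crux.

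With such a $\mathcal{T}$ and its tree $T$ in hand, the two asserted properties of the $\Gamma$-action should follow by short arguments. For finiteness of edge-stabilizers: the stabilizer of an edge $\{A, A^c\}$ of $T$ stabilizes setwise the nonempty finite edge-set $\partial A$ of $G$, so the kernel of its action on $\partial A$ fixes some edge of $G$, hence fixes or transposes that edge's two endpoints and thus sits with index at most two inside the intersection of two vertex-stabilizers of $G$, which is finite; an extension of a finite group by a finite group being finite, the edge-stabilizer is finite. For the absence of fixed points: since $G$ is multi-ended, $\mathcal{T} \ne \emptyset$, so $T$ has an edge and at least two vertices; and a $\Gamma$-fixed vertex $\xi$ of $T$ would, via the canonical $\Gamma$-equivariant map sending $v \in V(G)$ to the orientation of $\mathcal{T}$ ``pointing at $v$'' together with transitivity of $\Gamma$ on $V(G)$, force every vertex of $G$ into a single ``branch at $\xi$'' and hence some $\Gamma$-translate of a thin cut to have all of $V(G)$ on one of its sides --- contradicting that both sides of a cut are infinite. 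Assembling these steps yields the theorem; the whole argument is meant to be organized so that each move --- selecting thin cuts, uncrossing, reading off $T$ --- is explicit enough to be carried out in the Borel category, which is the paper's real target.
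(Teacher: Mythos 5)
Your outline matches the paper's strategy in spirit (thin cuts, submodularity of the coboundary, local finiteness of crossings, an invariant nested family, a structure tree, finite edge-stabilizers), but the step you yourself flag as the crux --- producing a nonempty nested $\Gamma$-invariant subfamily of the thin cuts --- is exactly where the proposal has a genuine gap. The ``Dunwoody-type uncrossing'' you describe (repeatedly replace a crossing pair by a pair of its corners, plus unspecified bookkeeping on ``blocks'') involves choices: which crossing pair, and which diagonal of corners. Such choices are not canonical, so equivariance does not come for free, and for an infinite collection of thin cuts it is not even clear the process terminates or stabilizes; this is precisely the kind of construction that the paper points out goes wrong in the literature. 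The paper (following Kr\"on) avoids iteration altogether with a one-step canonical selection: the crossing (non-nestedness) graph on thin cuts is locally finite (\cref{non-nestedness_is_locally_finite}), one takes the set $\Cthin'$ of thin cuts of \emph{minimum} crossing degree --- automatically $\mathrm{Aut}(G)$-invariant --- and then the submodularity estimates (\cref{opposite_corners:thin_cuts,opposite_corners:lower_degree}: opposite corners of crossing thin cuts are thin, and passing to them strictly lowers the sum of degrees) show that two minimum-degree thin cuts cannot cross (\cref{C''_is_nested}). Without this (or some equally canonical substitute) your argument does not yet produce the family $\mathcal{T}$, so the tree never gets off the ground.

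Two secondary points. First, ``nestedness is precisely what makes the resulting graph connected and acyclic'' is an oversimplification: one also needs a discreteness condition on the cut system (the paper's chain-vanishing, \cref{C_k_has_chain-vanishing}, used in \cref{T_is_a_tree}); it does hold for cuts of bounded coboundary, but it must be stated and used. Second, your no-fixed-point argument is shaky: if $\Gamma$ fixes a vertex $\xi$ of $T$, it only \emph{permutes} the branches at $\xi$, so transitivity on $V(G)$ does not force all of $V(G)$ into a single branch, and no cut acquires all vertices on one side. The paper's argument is more direct and worth adopting: given a vertex $[A]$, use transitivity to find $\gamma$ sending a vertex of $\partial A$ into $A \setminus \partial A$; then $\gamma A \neq A$ while $\gamma A \cap A \neq \emptyset$, so $\gamma A$ is neither equal nor orthogonal to $A$, hence $\gamma [A] \neq [A]$ (the ``moreover'' clause of \cref{action_on_C_gives_action_on_T}). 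Your edge-stabilizer argument, by contrast, is essentially the paper's and is fine.
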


The latter implies a slight strengthening of Stallings' theorem:

\begin{theorem}\label{intro:general_Stallings}
	For a f.g. group $\Ga$, the following are equivalent:
	
	\begin{enumerate}[(1)]
		\item $\Ga$ is multi-ended.
		
		\item $\Ga$ admits a transitive action on a multi-ended graph $G$ with all vertex-stabilizers being finite.
		
		\item $\Ga$ splits over a finite subgroup $\De$.
	\end{enumerate}
\end{theorem}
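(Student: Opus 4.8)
The plan is to prove the cycle of implications $(1) \Rightarrow (2) \Rightarrow (3) \Rightarrow (1)$, where the substantive content — the passage from a multi-ended action to a splitting — is routed through Theorem~\ref{intro:action_on_multiended_graph==>action_on_tree} together with standard Bass--Serre theory. The implication $(1) \Rightarrow (2)$ is essentially a tautology once we fix a finite generating set: if $\Ga$ is multi-ended, pick a finite symmetric generating set $F \subseteq \Ga$ and let $G$ be the associated Cayley graph. The left translation action of $\Ga$ on $G$ is transitive on vertices, and every vertex-stabilizer is trivial (hence finite), so $(2)$ holds with this $G$, which is multi-ended and connected by hypothesis. (Strictly, the definition of multi-ended in the introduction already packages connectedness of the Cayley graph, so no extra work is needed here.)

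For $(2) \Rightarrow (3)$, I would first observe that a transitive action on a multi-ended graph $G$ with finite vertex-stabilizers forces $G$ to be connected, or rather that we may restrict attention to a connected component: since the action is transitive on vertices it is transitive on components, so all components are isomorphic, and if $G$ had more than one component then $\Ga$ would act on the discrete component set, already yielding a splitting (or a finite-index issue) — but more cleanly, a standard reduction lets us assume $G$ is connected, and then a graph with more than one end is automatically infinite and connected, putting us exactly in the hypothesis of Theorem~\ref{intro:action_on_multiended_graph==>action_on_tree}. Applying that theorem, $\Ga$ acts on a tree $T$ with finite edge-stabilizers and no global fixed point. Now invoke Bass--Serre theory: an action of a group on a tree without a fixed point (equivalently, without a fixed end in the relevant formulation, but here we have no fixed vertex, and one checks there is also no fixed edge or inversion after passing to the barycentric subdivision) decomposes $\Ga$ as the fundamental group of a graph of groups with edge groups conjugate to the edge-stabilizers, which are finite. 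A group that is a nontrivial fundamental group of a graph of groups with finite edge groups splits over one of those edge groups — either as an amalgamated product $K \ast_\De \La$ or an HNN-extension $\ast_\De K$ — giving $(3)$ with $\De$ finite. The one point requiring a little care is ensuring the splitting is \emph{nontrivial}, i.e.\ that $\De \ne \Ga$ and, in the amalgam case, that neither factor equals $\Ga$; this follows from the no-fixed-point hypothesis, since a trivial splitting corresponds precisely to $\Ga$ fixing a vertex of $T$.

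For $(3) \Rightarrow (1)$, I would use the classical fact that a f.g. group splitting over a finite subgroup is multi-ended. One way: if $\Ga = K \ast_\De \La$ with $\De$ finite and the splitting nontrivial, then $\Ga$ acts on the associated Bass--Serre tree $T$ with finite edge-stabilizers, cocompactly, and $T$ is quasi-isometric to $\Ga$ (by the Milnor--\v{S}varc lemma, using that $K, \La$ are f.g. — which holds since $\Ga$ is f.g. and $\De$ is finite, or one argues directly); since $T$ is an infinite tree that is not a line and not a point, it has infinitely many (at least two) ends, and the number of ends is a quasi-isometry invariant, so $\Ga$ is multi-ended. The HNN case is analogous. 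Alternatively, one can cite the end-counting directly from Bass--Serre theory. The main obstacle in writing all of this is not any single hard step but rather the bookkeeping in invoking Bass--Serre theory cleanly: making precise that "no fixed point" yields a nontrivial graph-of-groups decomposition (dealing with inversions via barycentric subdivision, and with the possibility that $T$ is a single edge), and citing the Milnor--\v{S}varc / quasi-isometry-invariance-of-ends package at the right level of generality. Since the paper's stated goal is to reduce everything to Theorem~\ref{intro:action_on_multiended_graph==>action_on_tree}, I expect the proof of Theorem~\ref{intro:general_Stallings} itself to be short, essentially a paragraph citing Bass--Serre theory in both directions, with Theorem~\ref{intro:action_on_multiended_graph==>action_on_tree} doing all the real work.
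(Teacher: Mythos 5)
Your proposal is correct and takes essentially the same route as the paper: the paper only writes out the nontrivial implication (2)$\Rightarrow$(3), as the corollary closing \cref{sec:application_to_ends_of_graphs}, by applying \cref{intro:action_on_multiended_graph==>action_on_tree}, adding midpoints to remove inversions, and removing an edge of the resulting graph-of-groups decomposition to split $\Ga$ over a finite edge stabilizer, while (1)$\Rightarrow$(2) via the Cayley graph and the classical (3)$\Rightarrow$(1) are treated as standard, exactly as in your sketch. (Your parenthetical that a trivial splitting corresponds \emph{precisely} to $\Ga$ fixing a vertex of $T$ is slightly loose---for an arbitrarily chosen edge of the quotient graph the splitting can be trivial even without a fixed vertex, and one should instead invoke the standard fact that a f.g.\ group acting on a tree without inversions and without a fixed vertex splits nontrivially over some edge stabilizer---but this is a routine point, and the paper's own proof does not address nontriviality at all.)
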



Our proof of \cref{intro:action_on_multiended_graph==>action_on_tree} is based on Kr\"{o}n's slick construction of a nested family of cuts in $G$ that is invariant under the action of $\Ga$ \cite{Kron:Stallings}*{Theorem 3.3}, however our construction of the tree on this family is different. It is this construction that adapts to countable Borel equivalence relations, yielding \cref{intro:Borel_Stallings}, before stating which, we roughly define and explain the involved objects.

For a graph $G$ on a set $X$, a \emph{cut} is an infinite set $C \subseteq X$ contained in a single connected component $Y$ of $G$ such that $Y \setminus C$ is also infinite but there are only finitely many edges of $G$ between $C$ and $Y \setminus C$. If $G$ is a locally countable Borel graph (i.e. $G \subseteq X^2$ is a Borel set) on a standard Borel space $X$, then the set $\CC_G$ of all cuts is also naturally a standard Borel space. Call a set $\CC \subseteq \CC_G$ \emph{$G$-complete} if it contains at least one cut from every connected component of $G$.

The collection $\CC_G$ admits $G$-complete Borel subsets $\CC \subseteq \CC_G$ with certain desired properties (being self dual, nested, chain-vanishing, and meeting every $G$-connected component) and we temporarily call such $\CC$ \emph{good}. Good collections are readily available, e.g. $\Cthin'_G$, as defined below right before \cref{C''_is_nested}, or any Borel maximal non-nested subset of the set of thin cuts, see \cref{remark:maximal_nonnested_collection}. On a collection $\CC$, we define a Borel binary relation $\sim_\CC$ (\cref{defn:sim}), which turns out to be an equivalence relation if $\CC$ is good.

\begin{theorem}[Stallings for equivalence relations]\label{intro:Borel_Stallings}
	Let $E$ be a countable Borel equivalence relation on a standard Borel space $X$ and let $G$ be a multi-ended Borel graphing of $E$. For any $G$-complete good Borel collection $\CC \subseteq \CC_G$, there is a treeable\footnote{That is: admits an acyclic Borel graphing.} equivalence relation $E_T$ and a Borel equivalence relation $E_\CC \le_B \; \sim_\CC$ such that $E = E_T \ast E_\CC$.
\end{theorem}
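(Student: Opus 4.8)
The plan is to mimic, in the Borel category, the classical construction of the Bass--Serre tree attached to an invariant nested family of cuts, but to keep careful track of how much of the original equivalence relation $E$ is "used up" by the tree. So let $G$ graph $E$, and let $\CC \subseteq \CC_G$ be a $G$-complete good Borel collection. The first step is to build, from $\CC$, a Borel graph $T$ on a standard Borel space $Y$ whose connected components are trees; intuitively, the vertices are the $\sim_\CC$-classes (these are the "nested blocks" playing the role of vertices of the structure tree) together with the oriented-edge data $\edgec[\CC]{C}$ coming from the cuts $C \in \CC$, and the edges of $T$ record incidence between a cut and the two blocks it separates. Goodness (self-duality, nestedness, chain-vanishing) is exactly what guarantees that $\sim_\CC$ is an equivalence relation and that the resulting component graphs are connected and acyclic — this is where the earlier lemmas (around \cref{C''_is_nested} and the analysis of $\sim_\CC$) are invoked wholesale.

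The second step is to put an equivalence relation on $Y$. There is a natural Borel surjection (or at least a Borel correspondence) from $X$ to $Y$: a point $x \in X$ determines, for each cut $C \in \CC$ in its $G$-component, which side of $C$ it lies on, hence determines a "direction" and ultimately a vertex of the tree in its component — or, when the point is not pinned down to a single vertex, a canonical edge or end; the chain-vanishing hypothesis is what makes this assignment land in an actual vertex rather than an end. Transporting $E$ along this map gives $E_T$, the equivalence relation generated by $T$, which is treeable by construction since $T$ is an acyclic Borel graphing. The content of the free-product decomposition $E = E_T \ast E_\CC$ is then that the "fibers" of the map $X \to Y$ — the points of $X$ that map to the same vertex of the tree — carry the residual relation $E_\CC$, and that $E_\CC$ Borel-reduces into $\sim_\CC$ because two $E$-equivalent points lying over the same tree-vertex are related in a way entirely controlled by the block structure, i.e. by $\sim_\CC$. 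Concretely one exhibits Borel graphings $G_T$ of $E_T$ and $G_\CC$ of $E_\CC$ whose union graphs $E$ and which share no common "cycle", so that every $E$-path factors uniquely (up to the obvious reductions) through alternating $G_T$- and $G_\CC$-segments — this is the combinatorial heart of the free-product statement.

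I expect the main obstacle to be precisely the \emph{freeness} of the decomposition, i.e. proving that $E_T$ and $E_\CC$ interact freely and not merely that $E$ is generated by them. In the group setting this is handled by Bass--Serre normal-form / ping-pong arguments on the tree; in the Borel setting one must instead produce an explicit Borel acyclic "orchestration" of the two graphings, showing that no nontrivial reduced word in the $E_T$- and $E_\CC$-moves can fix a point — equivalently, that the graph obtained by gluing $G_T$ and $G_\CC$ along the common vertex set $Y$ (after pulling $E_\CC$ back appropriately) is a Borel forest. The subtlety is uniformity: the classical argument runs component-by-component, but here it must be carried out by a single Borel construction, so one has to check that each choice made (the base vertex in a component, the orientation conventions for edges $\edgec[\CC]{C}$, the reduction of $E_\CC$ into $\sim_\CC$) can be made in a Borel way using standard selection theorems for countable Borel equivalence relations. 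A secondary, more technical point will be handling components of $G$ with "too few" cuts or with thin cuts that degenerate — but $G$-completeness and the multi-endedness of $G$ should rule out the pathological cases, leaving the freeness verification as the real work.
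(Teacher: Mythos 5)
There is a genuine gap, and it sits at the very first step of your plan: the canonical assignment $X \to Y$ sending a point $x$ to ``its'' vertex of the structure tree, with chain-vanishing allegedly forcing the assignment to land on a vertex rather than an end. Chain-vanishing forbids a member of $\CC$ from being contained in every term of a decreasing chain; it does \emph{not} forbid a single point from being so contained. Concretely, let $G$ be the line graph on $\Z$ together with one extra vertex $x$ joined to every $n \ge 0$, and let $\CC \defeq \set{C_i, C_i^c : i \ge 1}$ where $C_i \defeq \set{x} \cup \set{n \in \Z : n \ge i}$. Each $C_i$ is a cut, and $\CC$ is self-dual, nested, chain-vanishing, and $G$-complete, i.e. good; here $[C_i]_\CC = \set{C_i, C_{i-1}^c}$ for $i \ge 2$, so the ``blocks'' $\bigcap_{D \in [C_i]_\CC} D^c$ are the singletons $\set{i-1}$ (and $C_1^c$ for $i=1$), while $x$ lies in every $C_i$ and hence in no block: $x$ sits at an end of $\TC_\CC$ and your vertex map is undefined there. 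This is exactly the block-based reasoning that the paper flags as the flaw in Kr\"{o}n's construction (cf.\ \cref{fig:counterexample-for-Kron}). Since your $E_T$, your fibers carrying $E_\CC$, and the reduction $E_\CC \le_B \; \sim_\CC$ are all transported along this map, the argument does not get off the ground; and you yourself leave the freeness verification, which you correctly identify as the real content, without a mechanism.

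For comparison, the paper's proof avoids locating points in the tree altogether. By Luzin--Novikov one chooses an \emph{arbitrary} Borel selection $\pi : X \to \CC$ of a cut in each point's $G$-component (no compatibility with the sides of cuts is required), and one \emph{defines} $E_\CC$ as the $\pi$-pullback of $\sim_\CC$, which makes $E_\CC \le_B \; \sim_\CC$ automatic. The tree $\TC_\CC$ of \cref{subsec:tree} is lifted to an acyclic Borel graph $T_\CC$ on $\CC$ (acyclic because the quotient map is injective on its edges), then the edge-sliding technique of \cite{JKL}*{Proposition 3.3(i)} or \cite{Gaboriau:cost} produces an acyclic Borel graph on $\CC_0 \defeq \pi(X)$ whose projection to $\CC_0 / \sim_\CC$ is a treeing of the quotient; pulling this back through $\pi$ gives a graph $T$ on $X$ whose projection to $X / E_\CC$ is a treeing of $E / E_\CC$, and that is precisely Gaboriau's criterion for $E = E_T \ast E_\CC$. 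So the two missing ingredients in your proposal are a replacement for the nonexistent canonical vertex map (an arbitrary Borel selection suffices) and the edge-sliding/treeing-of-the-quotient argument that actually delivers freeness.
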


Here, by $E = E_T \ast E_\CC$, we mean that $E$ is the free product of $E_T$ and $E_\CC$ as introduced in \cite{Gaboriau:cost}*{Subsection IV-B}, and $E_\CC \le_B \; \sim_\CC$ means that $E_\CC$ is \emph{Borel reducible} to $\sim_\CC$, i.e., there is a Borel map $\pi : X \to \CC$ such that for any $x,y \in X$,
\[
x E_\CC y \iff \pi(x) \sim_\CC \pi(y).
\] 
The precise statement of \cref{intro:Borel_Stallings} is given in \cref{Borel_Stallings}.

\cref{intro:Borel_Stallings} can be used to prove treeability of some equivalence relations $E$ via isolating suitable Borel collections $\CC$, to which the following applies:

\begin{cor}[Condition for treeability]\label{intro:treeability_criterion}
	Let $E$ be a countable Borel equivalence relation on a standard Borel space $X$. If $E$ admits a multi-ended Borel graphing $G$ and a $G$-complete good Borel collection $\CC \subseteq \CC_G$ such that the equivalence relation $\sim_\CC$ is treeable (in particular, if it is smooth or hyperfinite), then $E$ is treeable.
\end{cor}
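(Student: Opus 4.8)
The plan is to deduce \cref{intro:treeability_criterion} directly from \cref{intro:Borel_Stallings} (in its precise form, \cref{Borel_Stallings}) together with two permanence properties of treeability. First I would apply \cref{Borel_Stallings} to the given multi-ended Borel graphing $G$ of $E$ and the given $G$-complete good Borel collection $\CC \subseteq \CC_G$, obtaining a treeable Borel equivalence relation $E_T$ and a Borel equivalence relation $E_\CC \le_B \; \sim_\CC$ with $E = E_T \ast E_\CC$.

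Next I would argue that $E_\CC$ is treeable. Since $\sim_\CC$ is assumed treeable and $E_\CC$ Borel reduces to it, this follows from the fact (Jackson--Kechris--Louveau) that treeability is closed downward under Borel reducibility among countable Borel equivalence relations. The two parenthetical cases are then subsumed: a smooth equivalence relation Borel reduces to equality, which is treed by the empty graphing and hence treeable, while a hyperfinite equivalence relation is classically treeable (e.g.\ realized by a Borel $\Zbb$-action, whose orbit graph is a disjoint union of lines and finite graphs, which can be pruned to a forest). So $E_\CC$ is treeable in every case.

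Finally, both $E_T$ and $E_\CC$ are treeable and $E = E_T \ast E_\CC$ is their free product in the sense of \cite{Gaboriau:cost}, so I would conclude by invoking that a free product of treeable equivalence relations is treeable: the union of an acyclic Borel graphing of $E_T$ with one of $E_\CC$ is a locally countable Borel graphing of the equivalence relation generated by $E_T$ and $E_\CC$, which is exactly $E$, and it is acyclic --- any cycle in it would, after contracting maximal monochromatic subpaths, produce a nontrivial reduced alternating $(E_T, E_\CC)$-sequence returning to its starting vertex, which is precisely what the defining property of the free product forbids. Hence $E$ admits an acyclic Borel graphing, i.e.\ is treeable. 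I expect no real obstacle here: all the genuine content lives in \cref{intro:Borel_Stallings}, and the only point worth a second glance is that both permanence properties used are genuinely \emph{Borel} (closure of treeability under $\le_B$ is proved at the Borel level, and the graphing-union argument above is pointwise), so that $E_T \ast E_\CC$ being a Borel rather than merely measured free product causes no difficulty.
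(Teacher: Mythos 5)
Your proposal is correct and is exactly the route the paper takes: apply \cref{Borel_Stallings} to obtain $E = E_T \ast E_\CC$ with $E_T$ treeable and $E_\CC \le_B \ \sim_\CC$, invoke closure of treeability under Borel reducibility to get $E_\CC$ treeable, and conclude using that a free product of treeable countable Borel equivalence relations is treeable. The paper states this last implication as immediate; your sketch of the graphing-union/acyclicity argument correctly justifies it.
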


\begin{acknowledgements}
	I thank Institut Mittag-Leffler (Sweden) and the organizers of the program ``Classification of Operator Algebras: Complexity, Rigidity, and Dynamics'' in Spring of 2016 as the present research was done within this program at the institute. I am most grateful to Damien Gaboriau for encouraging this line of thought and for his feedback. I also thank Clinton Conley, Andrew Marks, and Robin Tucker-Drob for going through my construction with me, which improved by understanding of it.
\end{acknowledgements}

\stoptocwriting
\subsection*{Comparison with other results and proofs}


Our proof of \cref{intro:action_on_multiended_graph==>action_on_tree} is not, in principle, too different from other proofs existing in the literature, e.g., \cite{Dunwoody:structure_trees}, \cite{Dicks-Dunwoody}, and \cite{Kron:Stallings}, in the sense that it uses some of the common ideas involved in constructions of trees such as nested sets (see \cref{subsec:nested-sets}) and thin cuts (see \cref{subsec:thin-neat_cuts}), as well as the equivalence relation in \cref{defn:sim}. 


\begin{figure}[ht]
	\centering		
	\includegraphics[scale=.5]{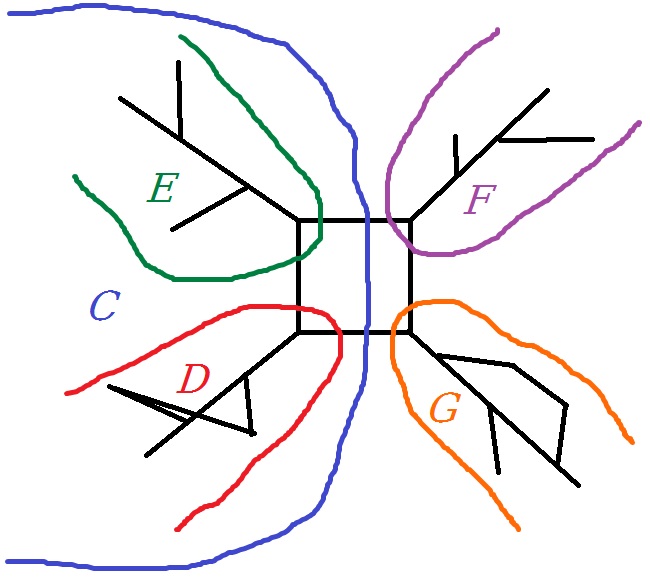}
	\caption{Nested collection $\CC \defeq \set{C,D,E,F,G}$ of thin cuts such that $C \notin \CC(B)$ for every $\CC$-block $B$.}\label{fig:counterexample-for-Kron}
\end{figure}

The shortest proof of a version of \cref{intro:action_on_multiended_graph==>action_on_tree} that the current author is aware of is presented in \cite{Kron:Stallings}, featuring a slick construction of a nested collection of cuts invariant under the action of the group \cite{Kron:Stallings}*{Sections 2 and 3} and a simpler construction of a tree using blocks \cite{Kron:Stallings}*{Section 4}. Trying to understand \cite{Kron:Stallings}*{Section 4} is what initiated the present research because there seem to be issues in the very definition of the tree. More precisely, the existence of a block $B_C$ for each $C \in \CC$ claimed in \cite{Kron:Stallings}*{Lemma 4.1} is false and \cref{fig:counterexample-for-Kron} depicts a counterexample. This invalidates the claim of \cite{Kron:Stallings}*{Theorem 4.2} that the defined graph $T(\CC)$ is a tree, namely, it can be disconnected. Also, even if \cite{Kron:Stallings}*{Lemma 4.1} was true, the proof of \cite{Kron:Stallings}*{Theorem 4.2} (both parts: acyclicity and connectedness) seems oversimplified and does not make sense to the present author. 
The incorrectness of some parts of \cite{Kron:Stallings} is also mentioned in \cite{Hensel-Kielak}*{Footnote 1 on page 4}.

Thus, the proof of \cref{intro:action_on_multiended_graph==>action_on_tree} given in the current paper is the simplest one known to the present author: it combines the construction of the non-nested collection given in \cite{Kron:Stallings}*{Sections 2 and 3} and a construction of a tree that adapts to the setting of countable Borel equivalence relations. To keep the present paper self-contained, \cite{Kron:Stallings}*{Sections 2 and 3} are rewritten in \cref{subsec:minimizing_nonnestedness} in the local terminology.

There are a number of other related results and proofs revolving around similar ideas appear, see, for example, \cites{Dunwoody:accessibility_and_groups,Dicks-Dunwoody,Dunwoody:cutting_graphs,Dunwoody-Kron:vertex_cuts,Evangelidou-Papasoglu:cactus,Kron:quasi-isometries,Moller:ends_of_graphs,Moller:ends_of_graphs_II,Hensel-Kielak}. We refer the reader to \cite{Kron:Stallings}*{Section 5} for a concise description and comparison of some of these results.

As for \cref{intro:Borel_Stallings}, a similar free decomposition result was proven in \cite{Ghys:topologie_feuilles} via different methods. The most relevant statement in the latter paper is that if a graphing of a countable Borel equivalence relation $E$ has infinitely-many ends in each connected component then the equivalence relation decomposes into a free product of a nontrivial hyperfinite subequivalence relation and some other equivalence relation $E'$. It does not, however, give a direct insight into the nature/structure of $E_1$ as \cref{intro:Borel_Stallings} does with $E_\CC$, and the proof is significantly longer.

Ends of Borel graphs have also been extensively studied, from a different angle, in \cite{Miller_thesis}, \cite{Miller:ends_of_graphs_I}, and \cite{Hjorth-Miller:ends_of_graphs_II}.

\stoptocwriting
\subsection*{Organization}

\cref{sec:construction_of_tree} describes a construction of a tree on an abstract collection of sets satisfying certain properties. This is then applied to collections of cuts in multi-ended graphs in \cref{sec:application_to_ends_of_graphs}, yielding \cref{intro:action_on_multiended_graph==>action_on_tree,intro:general_Stallings}. Finally, \cref{sec:Borel-Stallings} is where we discuss the context of countable Borel equivalence relation and prove \cref{intro:Borel_Stallings}.

\resumetocwriting

\section{Constructing a tree on a collection of sets}\label{sec:construction_of_tree}

Fix an ambient set $X \ne \0$ and, henceforth, by a \emph{set} we will mean a subset of $X$. Let $\SC$ always denote a collection of \textbf{nonempty} subsets of $X$.

For a set $A \subseteq X$, we denote by $A^c$ the complement of $A$ within $X$ and for $i \in \set{-1,1}$, we put
\[
A^i \defeq \begin{cases}
A & \text{if } i = 1\\
A^c & \text{if } i = -1.
\end{cases}
\]

\subsection{Orthogonality, domination, and basis}

\begin{defn}
	Call sets $A,B$ \emph{orthogonal} and write $A \perp B$ if $A \cap B = \0$ and $A \ne B^c$. On the other hand, say that $A$ (resp., \emph{exactly}) \emph{dominates} $B$ if $A \supseteq B^i$ for some $i \in \set{-1,1}$ (resp., $A \supseteq B$). We say that a collection $\AC$ of sets (resp., \emph{exactly}) \emph{dominates} $B$ if some set in $\AC$ (resp., exactly) dominates $B$.
\end{defn}

\begin{obs}\label{orthogonal==>not_dominating}
	For sets $A,B$, if $A \perp B$, then neither dominates the other. In particular, $A^i \nperp B^j$ for any $(i,j) \in \set{-1,1}^2$ other than $(i,j) = (1,1)$.
\end{obs}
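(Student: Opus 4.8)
The plan is to unwind the two definitions and run a short case analysis. Recall that $A \perp B$ means $A \cap B = \0$ together with $A \ne B^c$, and that ``$A$ dominates $B$'' means $A \supseteq B^i$ for some $i$, i.e. $B \subseteq A$ or $B^c \subseteq A$. First I would record the trivial reformulations $A \cap B = \0 \iff A \subseteq B^c \iff B \subseteq A^c$, and note that since $A$ and $B$ are nonempty and disjoint, neither equals $X$, so $A^c$ and $B^c$ are nonempty as well.

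For the first assertion, suppose toward a contradiction that $A$ dominates $B$. If $B \subseteq A$, then $B = A \cap B = \0$, a contradiction. If instead $B^c \subseteq A$, then combined with $A \subseteq B^c$ (the reformulation of $A \cap B = \0$) this gives $A = B^c$, contradicting $A \ne B^c$. Hence $A$ does not dominate $B$; and since $\perp$ is symmetric ($A \cap B = B \cap A$, and $A \ne B^c \iff B \ne A^c$ by taking complements), interchanging the roles of $A$ and $B$ shows that $B$ does not dominate $A$ either.

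For the ``in particular'' clause I would invoke the contrapositive of what was just proved: if one of two sets dominates the other, the two are not orthogonal. From $A \perp B$ we have $B \subseteq A^c$, so $A^c$ exactly dominates $B$; hence $A^c \nperp B$, i.e. $A^{-1} \nperp B^{1}$. Dually, $A \subseteq B^c$ shows $B^c$ dominates $A$, so $A \nperp B^c$, i.e. $A^{1} \nperp B^{-1}$. Finally, $B \subseteq A^c$ also reads $(B^c)^{-1} = B \subseteq A^c$, so $A^c$ dominates $B^c$, giving $A^{-1} \nperp B^{-1}$; alternatively this case can be seen directly, since $A^c \cap B^c = (A \cup B)^c$ is empty only when $A \cup B = X$, which together with $A \cap B = \0$ would force $A = B^c$. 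This exhausts all $(i,j) \in \set{-1,1}^2$ with $(i,j) \ne (1,1)$.

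There is no genuine obstacle here — it is pure bookkeeping with complements. The only point worth a moment's attention is the tacit appeal to nonemptiness of $A$ and $B$: the statement actually fails for $A = \0$ (then $B$ exactly dominates $A$), so I would make this hypothesis explicit, which is harmless given the standing convention that the sets in play are elements of $\SC$.
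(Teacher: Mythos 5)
Your proof is correct and is exactly the routine unwinding of the definitions that the paper has in mind: it states this as an observation with no proof at all, so your case analysis supplies precisely the intended (immediate) argument. Your closing caveat about nonemptiness is also apt and harmless, since the paper's standing convention is that $\SC$ consists of nonempty subsets of $X$, so the sets to which the observation is applied are nonempty.
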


\begin{defn}
	A collection $\BC$ of sets is called \emph{orthogonal} if any two distinct sets in $\BC$ are orthogonal. We say that $\BC$ \emph{dominates} a collection of sets $\SC$ if every set in $\SC$ is dominated by some set in $\BC$. We call $\BC \subseteq \SC$ a \emph{basis} for $\SC$ if it is orthogonal and dominates $\SC$.
\end{defn}

Note that a basis can be infinite as well as finite, and we give concrete examples of both in \cref{remark:infinite-and-finite_bases} for collections of cuts in a graph.

Our global goal is to define a special tree on the set $\BF(\SC)$ of all bases for $\SC$. However, at this point, even the existence of a basis is not clear and our local goal is to prove it under some hypotheses on $\SC$.

\medskip

We continue by recording some easy properties of bases.

\begin{obs}\label{basis_no_proper_extension}
	For a collection of sets $\SC$ and $\BC_0 \subseteq \BC_1 \subseteq \SC$, if both $\BC_0, \BC_1$ are bases for $\SC$, then $\BC_0 = \BC_1$.
\end{obs}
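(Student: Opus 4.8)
The plan is to prove this by contradiction, leveraging that a basis is an \emph{orthogonal} subcollection of $\SC$ together with \cref{orthogonal==>not_dominating}, which says that orthogonal sets never dominate one another.

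Suppose toward a contradiction that $\BC_0 \subsetneq \BC_1$, and pick some $B \in \BC_1 \setminus \BC_0$. First I would note that $B \in \SC$, since $\BC_1 \subseteq \SC$. As $\BC_0$ is a basis for $\SC$, it dominates $\SC$, so in particular some $A \in \BC_0$ dominates $B$; that is, $A \supseteq B^i$ for some $i \in \set{-1,1}$. Next, observe that $A \ne B$: indeed $A \in \BC_0$ whereas $B \notin \BC_0$. Hence $A$ and $B$ are two distinct elements of $\BC_1$, and since $\BC_1$ is orthogonal, $A \perp B$. But then \cref{orthogonal==>not_dominating} gives that neither of $A, B$ dominates the other, contradicting the fact that $A$ dominates $B$. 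Therefore no such $B$ exists, i.e. $\BC_0 = \BC_1$.

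I do not expect any genuine obstacle here: the statement is essentially a one-line consequence of \cref{orthogonal==>not_dominating}. The only point worth flagging is that the case $i = 1$ (i.e. $A \supseteq B$) of ``$A$ dominates $B$'' is excluded by orthogonality only because the members of $\SC$ are nonempty --- if $A \perp B$ and $A \supseteq B$, then $B \subseteq A \cap B = \0$ --- but this is already folded into \cref{orthogonal==>not_dominating} via the standing convention on $\SC$, so it costs no extra work.
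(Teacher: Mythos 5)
Your proof is correct and is exactly the argument the paper intends: the paper's proof is simply ``Immediate from \cref{orthogonal==>not_dominating}'', and your write-up (a set of $\BC_1\setminus\BC_0$ would be dominated by some distinct member of $\BC_0\subseteq\BC_1$, contradicting orthogonality of $\BC_1$) is the natural expansion of that one-liner. No issues.
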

\begin{proof}
	Immediate from \cref{orthogonal==>not_dominating}
\end{proof}

\begin{lemma}\label{orthogonal-to-basis==>exactly_dominated}
	For a collection $\SC$ of sets with a basis $\BC \subseteq \SC$, if a set $A \in \SC$ is orthogonal to a set $B \in \BC$, then $A$ is exactly dominated by $\BC$.
\end{lemma}
\begin{proof}
	Since $\BC$ dominates $A$, there is $C \in \BC$ with $C \supseteq A^i$, for some $i \in \set{-1,1}$, and $C \ne B$, by \cref{orthogonal==>not_dominating}. If $i = -1$, then $B \perp A$ gives $B \subseteq A^c \subseteq C$, contradicting $B \perp C$, so $i = 1$.
\end{proof}

\subsection{Maximally orthogonal sets}

Here, we define a tool for building bases.

\begin{defn}
	For a collection $\SC$ of sets and sets $A, B \in \SC$, say that $B$ is \emph{$\SC$-maximally orthogonal to $A$} if it is an inclusion-maximal set in $\SC$ that is orthogonal to $A$, i.e. $B \perp A$ and for any $C \in \SC$, $C \perp A$ and $C \supseteq B$ implies $C = B$. Put 
	\[
	[A]_\SC \defeq \set{A} \cup \set{B \in \SC : B \text{ $\SC$-maximally orthogonal to } A}.
	\]
\end{defn}

\begin{lemma}\label{basis_contains_classes}
	For a collection $\SC$ with a basis $\BC \subseteq \SC$, for any $A \in \BC$, $[A]_\SC \subseteq \BC$.
\end{lemma}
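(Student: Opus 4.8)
We want: if $\BC \subseteq \SC$ is a basis and $A \in \BC$, then every set $B$ that is $\SC$-maximally orthogonal to $A$ lies in $\BC$. So fix such a $B$; in particular $B \perp A$ and $B \in \SC$. The plan is to use the fact that $\BC$ dominates $\SC$ to find a witness $C \in \BC$ dominating $B$, and then squeeze $C$ between $B$ and something that forces $C = B$.

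**Key steps.**

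First, since $B \perp A$ with $B \in \SC$ and $A \in \BC \subseteq \SC$, Lemma \ref{orthogonal-to-basis==>exactly_dominated} applies and gives that $B$ is \emph{exactly} dominated by $\BC$: there is $C \in \BC$ with $C \supseteq B$. I would like to conclude $C = B$, which (since $B \in \SC$ and $C \in \BC$, and $B$ is inclusion-maximal among elements of $\SC$ orthogonal to $A$) will follow as soon as I know $C \perp A$. So the crux is to verify $C \perp A$. Now $C \in \BC$ and $A \in \BC$, and $\BC$ is orthogonal; if $C \ne A$ then $C \perp A$ and we are done by maximality of $B$: $C \in \SC$, $C \perp A$, $C \supseteq B$, hence $C = B \in \BC$. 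It remains to rule out $C = A$. If $C = A$, then $A = C \supseteq B$, so $A \cap B = B$; but $B \perp A$ forces $A \cap B = \0$, whence $B = \0$, contradicting that all members of $\SC$ are nonempty. Therefore $C \ne A$, and the argument closes: $B = C \in \BC$, proving $[A]_\SC \subseteq \BC$ (the point $A \in \BC$ being given).

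**Where the difficulty lies.**

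There is essentially no deep obstacle here; the lemma is a short bookkeeping argument once Lemma \ref{orthogonal-to-basis==>exactly_dominated} is in hand. The one spot that needs genuine care is the exclusion of the degenerate case $C = A$: this is exactly where the standing convention that $\SC$ consists of \textbf{nonempty} sets gets used, and overlooking it would leave a gap. A secondary point of care is making sure the "exact" domination (i.e.\ $C \supseteq B$ rather than merely $C \supseteq B^i$ for some $i$) is what Lemma \ref{orthogonal-to-basis==>exactly_dominated} delivers, since it is precisely the non-complemented containment $C \supseteq B$ that lets inclusion-maximality of $B$ force $C = B$; if we only had $C \supseteq B^c$, the maximality hypothesis would not directly bite. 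Both of these are already handled by the results and conventions stated earlier in the excerpt, so the write-up should be three or four lines.
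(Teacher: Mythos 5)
Your proof is correct and follows essentially the same route as the paper: apply Lemma \ref{orthogonal-to-basis==>exactly_dominated} to obtain $C\in\BC$ with $C\supseteq B$, observe $C\ne A$ (hence $C\perp A$ by orthogonality of $\BC$), and conclude $C=B$ by inclusion-maximality of $B$. The only difference is that you give an explicit, self-contained derivation of $C\ne A$ from the nonemptiness convention, whereas the paper imports this fact as part of what the cited lemma supplies; your version is marginally more careful but not a different argument.
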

\begin{proof}
	Fix $B \in [A]_\SC \setminus \set{A}$, so $B \perp A$. By \cref{orthogonal-to-basis==>exactly_dominated}, there is $C \in \BC$ distinct from $A$ such that $C \supseteq B$. But $C \perp A$, so by the maximality of $B$, $C = B$. 
\end{proof}

We now introduce a condition on $\SC$, which ensures that $[A]_\SC$ contains enough sets.

\begin{defn}
	A sequence $(A_n)_{n \in \N}$ of sets is called a \emph{chain} if it is strictly monotone (i.e. either $\subsetneq$-increasing or $\subsetneq$-decreasing). Call the chain $(A_n^c)_{n \in \N}$ the \emph{dual} of $(A_n)_{n \in \N}$. For a collection $\SC$ of sets, call a decreasing (resp. increasing) chain $(A_n)_{n \in \N}$ \emph{$\SC$-vanishing} if there is no $B \in \SC$ such that $A_n \supseteq B$ (resp. $A_n \cap B = \0$) for every $n \in \N$. A collection of sets $\SC$ is said to be \emph{chain-vanishing} if every chain in it is $\SC$-vanishing.
\end{defn}

In \cref{example:1-edge_cuts} below, we provide an instance of a chain-vanishing collection of sets.

\begin{lemma}\label{chain-vanishing==>exists_maximally_orthogonal}
	For a chain-vanishing collection $\SC$ and any $A,B \in \SC$, if $A \perp B$, then there is $C \in [A]_\SC$ with $C \supseteq B$.
\end{lemma}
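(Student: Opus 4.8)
The plan is to build $C$ by greedily enlarging $B$ inside $\SC$ while preserving orthogonality to $A$. Concretely, I would work with the family
\[
\mathcal{D} \defeq \set{D \in \SC : B \subseteq D \text{ and } D \perp A},
\]
which is nonempty because $B \in \mathcal{D}$ (the hypothesis $A \perp B$ gives $B \perp A$, as $\perp$ is symmetric). The claim I would reduce everything to is that $\mathcal{D}$ has an inclusion-maximal element $C$. Such a $C$ already finishes the proof: if some $D \in \SC$ with $D \perp A$ had $D \supsetneq C$, then $D \supseteq C \supseteq B$, so $D \in \mathcal{D}$, contradicting maximality of $C$ in $\mathcal{D}$; hence $C$ is inclusion-maximal among \emph{all} sets of $\SC$ orthogonal to $A$, i.e. $C$ is $\SC$-maximally orthogonal to $A$, so $C \in [A]_\SC$, and of course $C \supseteq B$.

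To produce a maximal element of $\mathcal{D}$, I would show that $\mathcal{D}$ contains no strictly $\subsetneq$-increasing sequence $(C_n)_{n \in \N}$, and then run the obvious recursion: set $C_0 \defeq B$; given $C_n \in \mathcal{D}$, if it is not maximal in $\mathcal{D}$ pick $C_{n+1} \in \mathcal{D}$ with $C_{n+1} \supsetneq C_n$. Since each stage strictly increases and all the chosen sets lie in $\mathcal{D}$, the absence of an infinite strictly increasing sequence forces this process to halt, and it halts exactly at a maximal element. (This is a routine recursive/dependent-choice construction; note that every $C_n \supseteq B$ automatically, since the sequence is increasing and starts at $B$.)

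The only real content is the no-increasing-sequence claim, and this is precisely where the chain-vanishing hypothesis is used. Suppose toward a contradiction that $(C_n)_{n \in \N}$ is a strictly $\subsetneq$-increasing sequence with every $C_n \in \mathcal{D} \subseteq \SC$. Then $(C_n)$ is an increasing chain in $\SC$, so by chain-vanishing it is $\SC$-vanishing: there is no $B' \in \SC$ with $C_n \cap B' = \0$ for all $n$. But $A \in \SC$, and $C_n \perp A$ gives $C_n \cap A = \0$ for every $n$, so $A$ is exactly such a forbidden witness — a contradiction. I do not expect any serious obstacle beyond this; the rest is bookkeeping, mainly the upgrade from ``maximal in $\mathcal{D}$'' to ``$\SC$-maximally orthogonal to $A$'' carried out in the first paragraph.
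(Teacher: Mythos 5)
Your proof is correct and follows essentially the same route as the paper: greedily enlarge $B$ through sets of $\SC$ orthogonal to $A$, and observe that the process must terminate at an element of $[A]_\SC$ because an infinite strictly increasing chain would have $A \in \SC$ as a witness violating chain-vanishing. The paper phrases this as a direct contradiction (assuming no such $C$ exists and building the infinite chain), but the content is identical.
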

\begin{proof}
	Otherwise, we contradict the chain-vanishing property by recursively building an increasing chain $(B_n)_{n \in \N}$ with $B_0 = B$ and $B_n \perp A$ for every $n \in \N$. Indeed, assume $(B_i)_{i \le n}$ is already defined and $B_n \perp A$. We know that $B_n \notin [A]_\SC$, so it is not maximal in $\SC$ among the sets orthogonal to $A$, and thus, there is $B_{n+1} \in \SC$ with $B_{n+1} \supseteq B_n$ and $B_{n+1} \perp A$.
\end{proof}

\subsection{Nested sets}\label{subsec:nested-sets}

\begin{terminology}
	For sets $A,B$, the sets $A^i \cap B^j$, $i,j = \pm 1$, are called the \emph{corners} of $(A,B)$.
\end{terminology}

\begin{defn}
	Sets $A$ and $B$ are called \emph{nested} if they have an empty corner. A collection of sets $\SC$ is called \emph{nested} if any two sets in $\SC$ are nested.
\end{defn}

\begin{example}\label{example:1-edge_cuts}
	Let $\TC \defeq (X,E)$ be an acyclic graph on $X$ and let $\SC$ be the collection of all subsets of $X$ that are connected components of the graph obtained from $\TC$ by removing a single edge, i.e.
	\[
	\SC \defeq \set{C \subseteq X : C \text{ is a connected component of } \TC - \set{e}, e \in E}.
	\]
	It follows from acyclicity of $\TC$ that $\SC$ is nested. Moreover, if $\TC$ is a tree, then $\SC$ is also chain-vanishing.
\end{example}

Using orthogonality and domination, we rephrase nestedness as a (nonexclusive) alternative.

\begin{obs}[Alternative for nested sets]\label{alternative_for_pair}
	For any nested sets $A,B$, either $A$ dominates $B$ or $A \perp B^j$, for some $j \in \set{-1,1}$.
\end{obs}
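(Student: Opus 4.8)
The plan is a straightforward case analysis on which of the four corners $A^i \cap B^j$, $i,j = \pm 1$, witnesses nestedness of $A$ and $B$. Since the statement is not symmetric in $A$ and $B$, I would organize the cases by the value of $i$, and the only place requiring any care is the clause ``$A \ne B^c$'' in the definition of orthogonality.

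First I would treat the corners with $i = -1$, i.e.\ the case $A^c \cap B^j = \0$ for some $j \in \set{-1,1}$. This says $A^c \subseteq (B^j)^c = B^{-j}$, and taking complements gives $A \supseteq B^j$, so $A$ dominates $B$ and we land in the first disjunct of the alternative.

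Next I would treat the corners with $i = 1$, i.e.\ the case $A \cap B^j = \0$ for some $j \in \set{-1,1}$. Here I would split further: if $A = (B^j)^c = B^{-j}$, then $A \supseteq B^{-j}$, and again $A$ dominates $B$; otherwise $A \ne (B^j)^c$, and combined with $A \cap B^j = \0$ this is exactly the assertion $A \perp B^j$, the second disjunct. Since $A$ and $B$ are nested, at least one corner is empty, so one of these two cases applies and the proof is complete. I do not anticipate any real obstacle here; the only subtlety is that $A \cap B^j = \0$ by itself does not yield orthogonality — one must separately exclude $A = (B^j)^c$ — and this is precisely why the case $i = 1$ bifurcates into ``domination'' versus ``orthogonality''.
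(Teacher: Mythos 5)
Your proof is correct and takes essentially the same approach as the paper: a case analysis on which corner of $(A,B)$ is empty, translating $A^c \cap B^j = \0$ into domination and $A \cap B^j = \0$ into orthogonality. The one place you are more explicit than the paper is in addressing the clause $A \ne (B^j)^c$ in the definition of orthogonality — the paper's proof concludes only ``$A \cap B^j = \0$'' and leaves it to the reader to notice that $A = (B^j)^c$ would already put us in the domination case, which you correctly spell out.
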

\begin{proof}
	$A \supseteq B^i$ is the same as $A^c \cap B^i = \0$. Thus, if $A$ doesn't dominate $B$, then, by nestedness, it must be that $A \cap B^j = \0$ for some $j \in \set{-1,1}$.
\end{proof}

For a set $A \in \SC$, if $A$ belongs to a basis then \cref{basis_contains_classes} implies that $[A]_\SC$ is an orthogonal family. However, we still haven't shown that every $A \in \SC$ belongs to a basis. In fact, we show the orthogonality of $[A]_\SC$ first in order to show the existence of a basis.

\begin{lemma}\label{maximally_orthogonal_to_third==>orthogonal}
	For any nested collection of sets $\SC$ and $A \in \SC$, $[A]_\SC$ is an orthogonal family.
\end{lemma}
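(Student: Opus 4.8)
The plan is to check orthogonality directly for an arbitrary pair of distinct sets in $[A]_\SC$. If one of the two sets is $A$ itself, then the other is $\SC$-maximally orthogonal to $A$ and hence orthogonal to $A$ by definition, so that case is immediate. The substantive case is two distinct sets $B, C \in [A]_\SC \setminus \set{A}$, each $\SC$-maximally orthogonal to $A$, and the goal is to show $B \perp C$.

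First I would rule out that $B$ dominates $C$ (the case that $C$ dominates $B$ being symmetric). Suppose $B \supseteq C^i$ for some $i \in \set{-1,1}$. If $i = 1$, then $B \supsetneq C$ since the two sets are distinct, while $B \perp A$; this contradicts the inclusion-maximality of $C$ among sets in $\SC$ orthogonal to $A$. If $i = -1$, then $B \supseteq C^c$, and since $B \cap A = \0$ we get $A \subseteq B^c \subseteq C$; but $C \cap A = \0$, forcing $A = \0$, which contradicts the standing assumption that members of $\SC$ are nonempty. Hence $B$ does not dominate $C$, and by symmetry $C$ does not dominate $B$.

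Now I would invoke nestedness: since $\SC$ is nested, $B$ and $C$ are nested, so by the alternative for nested sets (\cref{alternative_for_pair}) together with the previous paragraph, $B \perp C^j$ for some $j \in \set{-1,1}$. If $j = -1$, then $B \perp C^c$ gives $B \cap C^c = \0$ and $B \ne (C^c)^c = C$, i.e.\ $B \subsetneq C$; combined with $C \perp A$ this again contradicts the maximality of $B$. Therefore $j = 1$, that is $B \perp C$, completing the proof.

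I do not anticipate a genuine obstacle here: the argument is short, and the only care needed is the bookkeeping with the $A^i$ notation and the empty-corner dichotomy when applying \cref{alternative_for_pair}. The two facts doing all the work are that $A$ is nonempty and that $B$ and $C$ are \emph{maximal} among sets orthogonal to $A$ — precisely what eliminates the ``$B$ dominates $C$'' branch and the ``$j = -1$'' branch.
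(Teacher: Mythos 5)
Your proof is correct and is essentially the paper's own argument: both hinge on the maximality of $B$ and $C$ to rule out either one containing the other, on $A \ne \0$ together with $A \perp B$ and $A \perp C$ to rule out $B \supseteq C^c$ (equivalently $B^c \cap C^c = \0$), and on nestedness to force $B \cap C = \0$. The only difference is packaging: you route the nestedness step through \cref{alternative_for_pair} with a case analysis on domination (and you also spell out the trivial case where one of the two sets is $A$), whereas the paper argues directly that three corners of $(B,C)$ are nonempty so the fourth must be empty.
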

\begin{proof}
	Because $A \perp B$ and $A \perp C$, $B^c \cap C^c \supseteq A \ne \0$, which, in particular, implies that $B^c \ne C$. Also, by the maximality of $B$ and $C$, $B \nsubseteq C$ and $C \nsubseteq B$, or equivalently, $B \cap C^c \ne \0$ and $B^c \cap C \ne \0$. Thus, by nestedness, it must be that $B \cap C = \0$, so $B \perp C$.
\end{proof}

\begin{prop}\label{class_is_basis}
	For any nested, chain-vanishing collection $\SC$ of sets and any $A \in \SC$, $[A]_\SC$ is a basis for $\SC$.
\end{prop}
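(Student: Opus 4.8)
The plan is to show that $[A]_\SC$ is orthogonal and dominates $\SC$; orthogonality is already handed to us by \cref{maximally_orthogonal_to_third==>orthogonal} (using nestedness), so the real content is domination. Fix an arbitrary $D \in \SC$; I want to produce some $C \in [A]_\SC$ with $C \supseteq D^i$ for some $i \in \set{-1,1}$. The starting point is \cref{alternative_for_pair} applied to the nested pair $A, D$: either $A$ already dominates $D$ (in which case $A \in [A]_\SC$ does the job), or $A \perp D^j$ for some $j \in \set{-1,1}$. So assume the latter, and replace $D$ by $D^j$; i.e. without loss of generality $A \perp D$. Now \cref{chain-vanishing==>exists_maximally_orthogonal}, which crucially uses chain-vanishing, gives a set $C \in [A]_\SC$ with $C \supseteq D$, and we are done.

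Carrying it out in order: first dispatch orthogonality of $[A]_\SC$ by citing \cref{maximally_orthogonal_to_third==>orthogonal}. Then, to check $[A]_\SC \subseteq \SC$ (required for it to be a basis in the sense of the definition), note each element of $[A]_\SC$ is by definition either $A \in \SC$ or a member of $\SC$ maximally orthogonal to $A$, hence in $\SC$. Finally, for domination, take $D \in \SC$ arbitrary, apply \cref{alternative_for_pair} to $A$ and $D$: in the first horn, $A$ dominates $D$ and $A \in [A]_\SC$; in the second horn, $A \perp D^j$, so apply \cref{chain-vanishing==>exists_maximally_orthogonal} to the orthogonal pair $A, D^j$ (note $D^j \in \SC$ only if $j = 1$, so a small point must be addressed — see the obstacle below) to get $C \in [A]_\SC$ with $C \supseteq D^j$, i.e. $C$ dominates $D$.

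The one subtlety — and the main obstacle — is that \cref{alternative_for_pair} may hand us $A \perp D^{-1} = D^c$, but $D^c$ need not lie in $\SC$, whereas \cref{chain-vanishing==>exists_maximally_orthogonal} is stated for pairs of sets \emph{in} $\SC$. However, inspecting that lemma, one sees its proof only uses that the set being enlarged (there called $B$, here $D^c$) can be extended within $\SC$ while staying orthogonal to $A$; the recursive construction produces $B_n \in \SC$ for $n \ge 1$ regardless. So either we invoke a trivial strengthening of \cref{chain-vanishing==>exists_maximally_orthogonal} allowing $B \notin \SC$, or — more cleanly — we observe that if $A \perp D^c$ then $A \cap D^c = \0$, i.e. $A \subseteq D$, so $A$ itself dominates $D$ and we are back in the easy case. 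In fact this collapses the whole argument: the second horn of \cref{alternative_for_pair} with $j = -1$ already yields domination by $A$, so the only genuinely new case is $A \perp D$ with $D \in \SC$, exactly the hypothesis of \cref{chain-vanishing==>exists_maximally_orthogonal}. Thus no strengthening is needed; one just has to notice this case split, which is the part most likely to trip up a hasty write-up.
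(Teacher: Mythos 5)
Your route is the same as the paper's---orthogonality of $[A]_\SC$ from \cref{maximally_orthogonal_to_third==>orthogonal}, and domination via the alternative of \cref{alternative_for_pair} followed by \cref{chain-vanishing==>exists_maximally_orthogonal}---and you correctly spot the delicate point, namely the horn $A \perp D^c$, where $D^c$ need not belong to $\SC$. But your resolution of that horn is wrong: $A \perp D^c$ means $A \cap D^c = \0$ and $A \ne D$, i.e.\ $A \subsetneq D$, and this does \emph{not} say that $A$ dominates $D$---domination requires $A \supseteq D$ or $A \supseteq D^c$, so the inclusion points the wrong way. Hence ``we are back in the easy case'' is a non sequitur, and the claim that this horn collapses the whole argument is false. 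Your fallback, a ``trivial strengthening'' of \cref{chain-vanishing==>exists_maximally_orthogonal} allowing $B \notin \SC$, is also unjustified: the very first step of that lemma's recursion uses that a member of $\SC$ orthogonal to $A$ which is not maximally orthogonal admits a proper extension \emph{inside} $\SC$ still orthogonal to $A$; if $B \notin \SC$, there may be no member of $\SC$ containing $B$ at all, and the recursion never starts.

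Moreover, the missing case is substantive, not cosmetic. Take $X = \set{1,2,3}$, $\SC = \set{\set{1}, \set{1,2}}$, $A = \set{1}$, $D = \set{1,2}$: this $\SC$ is nested and (vacuously) chain-vanishing, the horn $A \perp D^c$ occurs, and $[A]_\SC = \set{A}$ dominates neither $D$ nor $D^c$. So in the bare generality of the statement the case $A \subsetneq D$ cannot be dismissed by any local trick; some extra input on $\SC$ is needed. The paper's own proof simply applies \cref{chain-vanishing==>exists_maximally_orthogonal} to $B^j$, tacitly treating $B^j$ as a member of $\SC$; this is harmless wherever the proposition is actually used, because those collections (e.g.\ $\Cthin'$ from \cref{C''_is_nested}, and the self-dual $\SC$ of \cref{T_is_a_tree}) are closed under complements, so that $D^c \in \SC$ and your $j = 1$ argument applies to the pair $A, D^c$ verbatim. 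To repair your write-up, invoke (or assume) self-duality of $\SC$ and handle the $j = -1$ horn by applying the lemma to $A$ and $D^c \in \SC$; as written, the case $A \subsetneq D$ is a genuine gap.
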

\begin{proof}
	By \cref{maximally_orthogonal_to_third==>orthogonal}, we only need to show that $[A]_\SC$ dominates $\SC$, so take $B \in \SC$. By \cref{alternative_for_pair}, either $A$ dominates $B$, in which case we are done, or $B^j \perp A$ for some $j \in \set{-1, 1}$. In the latter case, by the chain-vanishing property (\cref{chain-vanishing==>exists_maximally_orthogonal}), there is $C \in [A]_\SC$ with $C \supseteq B^j$.
\end{proof}

\begin{cor}\label{basis=class}
	For any nested, chain-vanishing collection $\SC$ of sets, the bases for $\SC$ are precisely the sets of the form $[A]_\SC$ for $A \in \BC$. In particular, for any $A,B \in \BC$, $[A]_\SC = [B]_\SC$, and two distinct bases are disjoint.
\end{cor}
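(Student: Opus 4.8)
The plan is to prove the asserted description of bases by two inclusions, the nontrivial one being that \emph{every} basis coincides with one of the classes $[A]_\SC$. The inclusion ``each $[A]_\SC$ is a basis'' is precisely \cref{class_is_basis}, which is where the nestedness and chain-vanishing hypotheses get used; everything else below is purely formal and valid for any $\SC$ admitting a basis. For the converse, I would fix an arbitrary basis $\BC \subseteq \SC$ and an arbitrary $A \in \BC$ and show $\BC = [A]_\SC$. The inclusion $[A]_\SC \subseteq \BC$ is \cref{basis_contains_classes}.

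For the reverse inclusion $\BC \subseteq [A]_\SC$, take $B \in \BC \setminus \set{A}$. Since $\BC$ is orthogonal, $B \perp A$, so it remains to check that $B$ is $\SC$-maximally orthogonal to $A$: suppose $C \in \SC$ satisfies $C \perp A$ and $C \supseteq B$, and I want $C = B$. Applying \cref{orthogonal-to-basis==>exactly_dominated} to the set $C \in \SC$, which is orthogonal to $A \in \BC$, yields a $D \in \BC$ with $D \supseteq C$, hence $D \supseteq C \supseteq B$. If $D \ne B$, then $D \perp B$ by orthogonality of $\BC$, contradicting \cref{orthogonal==>not_dominating} since $D$ dominates $B$; therefore $D = B$, and then $B \supseteq C \supseteq B$ forces $C = B$. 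This shows $B \in [A]_\SC$, completing $\BC = [A]_\SC$.

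Granting this, the ``in particular'' clauses are immediate. If $\BC$ is any basis and $A, B \in \BC$, then by the above $[A]_\SC = \BC = [B]_\SC$. And if $\BC_1, \BC_2$ are bases sharing an element $A$, then $\BC_1 = [A]_\SC = \BC_2$; contrapositively, distinct bases are disjoint.

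I do not anticipate a genuine obstacle, since each step collapses to a lemma already available; the only place needing a little care is the maximality verification in the reverse inclusion — namely, upgrading ``$\BC$ dominates $C$'' to ``some $D \in \BC$ contains $C$'' via \cref{orthogonal-to-basis==>exactly_dominated}, and then using orthogonality of $\BC$ together with \cref{orthogonal==>not_dominating} to identify $D$ with $B$.
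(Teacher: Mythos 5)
Your argument is correct, but it takes a slightly different route from the paper for the key step. The paper's proof is a three-line appeal to \cref{basis_contains_classes,class_is_basis,basis_no_proper_extension}: given a basis $\BC$ and $A \in \BC$, one has $[A]_\SC \subseteq \BC$ by \cref{basis_contains_classes}, $[A]_\SC$ is itself a basis by \cref{class_is_basis}, and then \cref{basis_no_proper_extension} (a basis admits no proper extension to a larger basis) forces $[A]_\SC = \BC$ outright, with no element-wise maximality check. You instead prove the reverse inclusion $\BC \subseteq [A]_\SC$ directly, verifying that each $B \in \BC \setminus \set{A}$ is $\SC$-maximally orthogonal to $A$ via \cref{orthogonal-to-basis==>exactly_dominated} and \cref{orthogonal==>not_dominating}; this is sound, and it buys a small amount of extra generality that your write-up correctly flags: the identity $\BC = [A]_\SC$ for $A \in \BC$ holds for \emph{any} collection $\SC$ admitting a basis, the nestedness and chain-vanishing hypotheses being needed only to guarantee (via \cref{class_is_basis}) that every class $[A]_\SC$ is a basis in the first place. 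The trade-off is that your maximality verification essentially re-proves, in a special case, the content that \cref{basis_no_proper_extension} already packages, so the paper's derivation is shorter given the lemmas at hand.
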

\begin{proof}
	Immediate from \cref{basis_contains_classes,class_is_basis,basis_no_proper_extension}.
\end{proof}

\begin{defn}\label{defn:sim}
	For $A,B \in \SC$, we write $A \sim_\SC B$ if $A \in [B]_\SC$.
\end{defn}

\begin{cor}\label{nested-and-chain-vanishing=>eq-rel}
	For any nested, chain-vanishing collection $\SC$ of sets, $\sim_\SC$ is an equivalence relation on $\SC$.
\end{cor}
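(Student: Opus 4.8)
The plan is to show that $\sim_\SC$ coincides with the relation ``belonging to a common basis for $\SC$'', which is manifestly an equivalence relation: by \cref{basis=class} every set of the form $[A]_\SC$ is a basis, any two distinct bases are disjoint, and every $A \in \SC$ lies in one (namely $[A]_\SC$), so the bases partition $\SC$. Concretely, I would prove, for all $A, B \in \SC$, the equivalence
\[
A \sim_\SC B \iff [A]_\SC = [B]_\SC ,
\]
and then read off the three axioms from the fact that the right-hand side is an equivalence relation.

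For the proof of the displayed equivalence: the backward direction is trivial, since $[A]_\SC = [B]_\SC$ gives $A \in [A]_\SC = [B]_\SC$, i.e. $A \sim_\SC B$ by \cref{defn:sim}. For the forward direction, suppose $A \in [B]_\SC$. Here the hypotheses on $\SC$ enter: by \cref{class_is_basis}, nestedness and chain-vanishing guarantee that $[B]_\SC$ is a basis for $\SC$, so $A$ lies in \emph{some} basis; \cref{basis=class} then forces $[A]_\SC = [B]_\SC$, because every basis containing $A$ equals $[A]_\SC$ (concretely, $[A]_\SC \subseteq [B]_\SC$ by \cref{basis_contains_classes}, $[A]_\SC$ is itself a basis by \cref{class_is_basis}, and no basis properly extends another by \cref{basis_no_proper_extension}).

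Granting the equivalence, reflexivity of $\sim_\SC$ is just $A \in [A]_\SC$, which holds by the very definition of $[A]_\SC$; symmetry and transitivity follow from symmetry and transitivity of equality. I do not anticipate any real obstacle here: all the substantive work — that $[A]_\SC$ is always a basis, and that bases admit no proper extension within $\SC$ — has already been carried out in \cref{class_is_basis,basis=class}, and this corollary is essentially a repackaging of those facts.
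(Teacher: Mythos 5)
Your proof is correct and follows essentially the same route as the paper, which treats this corollary as an immediate consequence of \cref{basis=class}: the sets $[A]_\SC$ are exactly the bases, every $A \in \SC$ lies in $[A]_\SC$, and distinct bases are disjoint, so the $[A]_\SC$ partition $\SC$ and $\sim_\SC$ is the associated equivalence relation. Your explicit derivation of the equivalence $A \sim_\SC B \iff [A]_\SC = [B]_\SC$ (via \cref{class_is_basis}, \cref{basis_contains_classes}, and \cref{basis_no_proper_extension}) is exactly the content that the paper leaves implicit.
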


\subsection{The graph $\TC_\SC$}\label{subsec:tree}

Throughout this subsection, let $\SC$ be a nested chain-vanishing collection of sets. We define an undirected graph $\TC_\SC$ on $\SC / \sim_\SC$ by putting an edge $\edgec{A}$ whenever both $A,A^c \in \SC$. Note that this is an undirected (i.e. symmetric) graph and all of the graph terminology used below is in the sense of undirected graphs.

\begin{obs}\label{no_loops_multi-edges}
	$\TC_\SC$ has no loops or multi-edges, i.e. for any $A,B \in \SC$,
	\begin{enumerate}
		\item\label{item:no_loops} $[A^c]_\SC \ne [A]_\SC$;
		\item\label{item:multi-edges} $[A]_\SC = [B]_\SC$ and $[A^c]_\SC = [B^c]_\SC$ implies $A = B$.
	\end{enumerate}
\end{obs}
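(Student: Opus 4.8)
The plan is to prove both parts by contradiction, exploiting the orthogonality of classes established in Lemma~\ref{maximally_orthogonal_to_third==>orthogonal} together with the basic incompatibility between orthogonality and domination recorded in Observation~\ref{orthogonal==>not_dominating}. First recall that, by Corollary~\ref{basis=class}, for $A,B \in \SC$ we have $[A]_\SC = [B]_\SC$ precisely when $A \sim_\SC B$, i.e. when $A \in [B]_\SC$; and by Lemma~\ref{maximally_orthogonal_to_third==>orthogonal} any two distinct elements of a single class $[A]_\SC$ are orthogonal. These are the only tools I expect to need.

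For part~\eqref{item:no_loops}, suppose toward a contradiction that $[A^c]_\SC = [A]_\SC$. Then $A^c \in [A]_\SC$. Since $A \ne A^c$ (as $A$ is nonempty and $X \ne \0$; if $X$ has only one point this needs a trivial separate remark, but in any case $A$ and $A^c$ cannot both be nonempty and equal), the class $[A]_\SC$ contains the two distinct sets $A$ and $A^c$, so by Lemma~\ref{maximally_orthogonal_to_third==>orthogonal} we would need $A \perp A^c$. But orthogonality of $A$ and $A^c$ requires $A \ne (A^c)^c = A$, which is absurd. Hence $[A^c]_\SC \ne [A]_\SC$.

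For part~\eqref{item:multi-edges}, assume $[A]_\SC = [B]_\SC$ and $[A^c]_\SC = [B^c]_\SC$ but $A \ne B$. From $[A]_\SC = [B]_\SC$ we get $B \in [A]_\SC$, and since $A \ne B$, Lemma~\ref{maximally_orthogonal_to_third==>orthogonal} gives $A \perp B$; in particular $A \cap B = \0$, so $A \subseteq B^c$, i.e. $B^c$ dominates $A$ (exactly, in fact $B^c \supseteq A$). On the other hand, from $[A^c]_\SC = [B^c]_\SC$ we get $B^c \in [A^c]_\SC$; here $A^c \ne B^c$ because $A \ne B$, so again by Lemma~\ref{maximally_orthogonal_to_third==>orthogonal} we have $A^c \perp B^c$, which by Observation~\ref{orthogonal==>not_dominating} means neither of $A^c, B^c$ dominates the other. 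But $B^c \supseteq A$ says $B^c$ dominates $A^c$ (taking $i = -1$ in the definition of domination: $B^c \supseteq (A^c)^{-1} = A$), a contradiction. Therefore $A = B$.

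I do not expect any real obstacle here; the only point requiring a moment's care is the degenerate case where a set equals its own complement, which cannot happen when both are required to be nonempty members of $\SC$ — and indeed $A = A^c$ would force $X = \0$. The substance of the argument is entirely contained in the already-proved fact that distinct sets in a common class are orthogonal, combined with the trivial observation that containment $B^c \supseteq A$ is exactly one of the two domination alternatives for the pair $(B^c, A^c)$, which orthogonality of $A^c$ and $B^c$ forbids.
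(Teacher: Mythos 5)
Your proof is correct and follows essentially the same route as the paper's: part~\labelcref{item:no_loops} reduces to the fact that $A \nperp A^c$, and part~\labelcref{item:multi-edges} derives two orthogonal pairs from the class equalities and then invokes \cref{orthogonal==>not_dominating} to contradict a domination. (Minor remark: for~\labelcref{item:no_loops} you do not actually need \cref{maximally_orthogonal_to_third==>orthogonal} — the definition of $[A]_\SC$ already says any member other than $A$ itself is orthogonal to $A$ — and since $A, A^c$ are both nonempty the case $A = A^c$ is impossible outright, no degenerate-$X$ caveat needed.)
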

\begin{proof}
	\labelcref{item:no_loops} follows from the fact that $A \nperp A^c$ and \labelcref{item:multi-edges} from \cref{orthogonal==>not_dominating}.
\end{proof}

\begin{defn}
	Let $\TC_\SC$ be as above and let $\la \in \N \cup \set{\w}$. We say that a (finite or infinite) sequence $(A_n)_{n < \la} \subseteq \SC$ \emph{represents} a path if $[A_0]_\SC, [A_1]_\SC, [A_2]_\SC, \ldots$ is a path in $\TC_\SC$ and $A_{n-1} \sim_\SC A_n^c$ for each $1 \le n < \la$.
\end{defn}

\begin{lemma}\label{no_backtracking_equivalence}
	A path in $\TC_\SC$ represented by $(A_n)_{n < \la} \subseteq \SC$, $\la \ge 3$, has no backtracking\footnote{A (finite or infinite) path $(v_n)_{n < \la}$ in an undirected graph is said to have \emph{backtracking} if $v_{n-2} = v_n$ for some $2 \le n < \la$.} if and only if $A_{n-1}^c \ne A_n$ for each $1 \le n < \la$.
\end{lemma}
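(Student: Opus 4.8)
The plan is to reduce the whole statement to the ``no multi-edges'' assertion of \cref{no_loops_multi-edges}. Concretely, I would prove that for each $n$ with $2 \le n < \la$,
\[
[A_{n-2}]_\SC = [A_n]_\SC \iff A_{n-1}^c = A_n .
\]
By the footnote defining backtracking, the path $[A_0]_\SC, [A_1]_\SC, [A_2]_\SC, \dots$ has backtracking precisely when the left-hand side holds for some such $n$, so negating the displayed equivalence over all $n$ in this range yields the statement. The hypothesis $\la \ge 3$ guarantees that this range is nonempty, and it is exactly the range for which both the $(n-1)$-st and the $n$-th instances of the defining relation of a representing sequence are at our disposal.

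First I would rewrite the left-hand side. Fix $n$ with $2 \le n < \la$. The $(n-1)$-st instance of the representing relation reads $A_{n-2} \sim_\SC A_{n-1}^c$; this presupposes $A_{n-1}^c \in \SC$ (so that $[A_{n-1}^c]_\SC$ is even defined) and it says exactly $[A_{n-2}]_\SC = [A_{n-1}^c]_\SC$. Consequently
\[
[A_{n-2}]_\SC = [A_n]_\SC \iff [A_{n-1}^c]_\SC = [A_n]_\SC .
\]

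Next I would bring in the $n$-th instance of the representing relation, $A_{n-1} \sim_\SC A_n^c$, i.e. $[A_{n-1}]_\SC = [A_n^c]_\SC$, which is the same as $[(A_{n-1}^c)^c]_\SC = [(A_n)^c]_\SC$. Now suppose $[A_{n-1}^c]_\SC = [A_n]_\SC$. Then the two sets $A_{n-1}^c$ and $A_n$ satisfy both hypotheses of \cref{no_loops_multi-edges}\labelcref{item:multi-edges} (namely $[A_{n-1}^c]_\SC = [A_n]_\SC$ and $[(A_{n-1}^c)^c]_\SC = [A_n^c]_\SC$), whence $A_{n-1}^c = A_n$. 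The reverse implication is immediate, since $A_{n-1}^c = A_n$ trivially forces $[A_{n-1}^c]_\SC = [A_n]_\SC$. Combining this with the previous display gives the displayed equivalence at $n$, and hence the lemma.

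I do not anticipate a genuine obstacle here: the argument is purely formal, built only from the definition of backtracking, the defining relation of a representing sequence, and \cref{no_loops_multi-edges}\labelcref{item:multi-edges}. The one point that requires care is the index bookkeeping — one invokes the representing relation at index $n-1$ to replace $A_{n-2}$ by $A_{n-1}^c$, and at index $n$ to pair $A_{n-1}^c$ with $A_n$ — and this is precisely what pins down why one needs $n \ge 2$ (equivalently $\la \ge 3$) and why $A_{n-1}^c$ is legitimately an element of $\SC$ rather than merely a subset of $X$.
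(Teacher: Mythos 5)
Your argument is correct and essentially identical to the paper's proof: both rewrite backtracking at $n$ as $[A_{n-2}]_\SC = [A_n]_\SC$, use the representing relation $A_{n-2} \sim_\SC A_{n-1}^c$ to convert this to $A_{n-1}^c \sim_\SC A_n$, and then invoke \labelcref{item:multi-edges} of \cref{no_loops_multi-edges} together with $A_{n-1} \sim_\SC A_n^c$ to conclude $A_{n-1}^c = A_n$. The only caveat --- shared verbatim by the paper's own proof --- is that this covers exactly the indices $2 \le n < \la$, whereas the stated condition ranges over $1 \le n < \la$, and the $n=1$ instance is addressed by neither argument.
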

\begin{proof}
	For each $2 \le n < \la$, because $A_{n-1}^c \sim_\SC A_{n-2}$, we have
	\[
	[A_{n-2}]_\SC = [A_n]_\SC \iff A_{n-1}^c \sim_\SC A_n.
	\]
	But we also have $A_{n-1} \sim_\SC A_n^c$, so \labelcref{item:multi-edges} of \cref{no_loops_multi-edges} gives
	\[
	A_{n-1}^c \sim_\SC A_n \iff A_{n-1}^c = A_n.\qedhere
	\]
\end{proof}

\begin{prop}\label{paths_in_T}
	Let $\SC$ be a nested, chain-vanishing collection of sets and let $(A_n)_{n < \la} \subseteq \SC$, $\la \in \N \cup \set{\w}$, represent a path in $\TC_\SC$ with no backtracking. Then $(A_n)_{n < \la}$ is strictly increasing.
\end{prop}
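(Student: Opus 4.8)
The plan is to prove the pointwise claim that $A_{n-1} \subsetneq A_n$ for every $1 \le n < \la$; chaining these inclusions immediately gives that $(A_n)_{n < \la}$ is strictly increasing.

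First I would convert the no-backtracking hypothesis into an inequality between consecutive terms: by \cref{no_backtracking_equivalence} (applied when $\la \ge 3$; the cases $\la \le 2$ are vacuous or handled directly), having no backtracking means $A_{n-1}^c \ne A_n$, equivalently $A_{n-1} \ne A_n^c$, for each $1 \le n < \la$. Now fix such an $n$. Since $(A_m)_{m < \la}$ represents a path, we have $A_{n-1} \sim_\SC A_n^c$, which by \cref{defn:sim} says precisely that $A_{n-1} \in [A_n^c]_\SC$. Unpacking the definition of $[\,\cdot\,]_\SC$, either $A_{n-1} = A_n^c$ or $A_{n-1}$ is $\SC$-maximally orthogonal to $A_n^c$; the former is excluded by the previous sentence, so in particular $A_{n-1} \perp A_n^c$.

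Finally, I unwind orthogonality: $A_{n-1} \perp A_n^c$ means $A_{n-1} \cap A_n^c = \0$ --- equivalently $A_{n-1} \subseteq A_n$ --- and $A_{n-1} \ne (A_n^c)^c = A_n$. Hence $A_{n-1} \subsetneq A_n$, and since $n$ with $1 \le n < \la$ was arbitrary, the sequence is strictly increasing.

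I do not anticipate a real obstacle: the argument is a direct unfolding of the definitions of $\sim_\SC$, $[\,\cdot\,]_\SC$, and $\perp$, and it uses neither nestedness nor chain-vanishing of $\SC$ --- those hypotheses are only needed to make the ambient objects well-behaved ($\TC_\SC$ an honest graph and $\sim_\SC$ an equivalence relation, cf. \cref{nested-and-chain-vanishing=>eq-rel}). The one thing to be careful about is the routine passage between ``$A \cap B^c = \0$'' and ``$A \subseteq B$'' and the bookkeeping of complements and of the indices at the ends of the path.
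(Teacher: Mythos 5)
Your proof is correct and is essentially the paper's own argument: no backtracking gives $A_{n-1} \ne A_n^c$ via \cref{no_backtracking_equivalence}, which combined with $A_{n-1} \sim_\SC A_n^c$ forces $A_{n-1} \perp A_n^c$, i.e.\ $A_{n-1} \subsetneq A_n$. The additional detail you supply (unpacking $[\,\cdot\,]_\SC$ and $\perp$, and the remark about small $\la$) only spells out steps the paper leaves implicit, not a different route.
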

\begin{proof}
	By \cref{no_backtracking_equivalence}, $A_{n-1} \ne A_n^c$, but we also have $A_{n-1} \sim_\SC A_n^c$, so $A_{n-1} \perp A_n^c$, or equivalently, $A_{n-1} \subsetneq A_n$.
\end{proof}

\begin{cor}
	$\TC_\SC$ is acyclic.
\end{cor}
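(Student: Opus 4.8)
The plan is to read acyclicity straight off \cref{paths_in_T}: that proposition says every no‑backtracking path represented by a sequence $(A_n)$ in $\SC$ is strictly $\subsetneq$‑increasing, and a cycle would force such a representing sequence to revisit a $\sim_\SC$‑class, which a strictly increasing sequence of nonempty sets cannot do.

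First I would record the routine observation that \emph{every} path in $\TC_\SC$ is represented by some sequence in $\SC$, in the precise sense of the definition above. Given consecutive vertices $v_{n-1}, v_n$ of the path, the edge between them is $\edgec{D}$ for some $D \in \SC$ with $D^c \in \SC$, so $\set{[D]_\SC, [D^c]_\SC} = \set{v_{n-1}, v_n}$; letting $A_n$ be whichever of $D, D^c$ satisfies $[A_n]_\SC = v_n$ (and $[A_n^c]_\SC = v_{n-1}$), the required relation $A_{n-1} \sim_\SC A_n^c$ holds because $[A_{n-1}]_\SC = v_{n-1} = [A_n^c]_\SC$. I would also recall, via \cref{no_loops_multi-edges}, that $\TC_\SC$ has no loops and no multi‑edges, so any cycle has length $\ell \ge 3$, has $\ell$ distinct vertices, and (being a cycle) contains no backtracking.

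Then, assuming toward a contradiction that $\TC_\SC$ contains a cycle $v_0, v_1, \dots, v_{\ell-1}, v_0$, I would concatenate it with itself to obtain the path $v_0, v_1, \dots, v_{\ell-1}, v_0, v_1, \dots, v_{\ell-1}, v_0$ of length $2\ell \ge 3$. A quick check of the two junction triples $(v_{\ell-2}, v_{\ell-1}, v_0)$ and $(v_{\ell-1}, v_0, v_1)$, using $\ell \ge 3$ and the distinctness of the cycle's vertices, shows this path still has no backtracking. Representing it by a sequence $(A_n)_{n \le 2\ell}$ as above and applying \cref{paths_in_T}, the sequence is strictly $\subsetneq$‑increasing, so in particular $A_0 \subsetneq A_\ell$. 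But the $0$‑th and $\ell$‑th vertices of the path both equal $v_0$, i.e. $[A_0]_\SC = v_0 = [A_\ell]_\SC$, so $A_0 \sim_\SC A_\ell$ with $A_0 \ne A_\ell$; by \cref{defn:sim} this means $A_0$ is $\SC$‑maximally orthogonal to $A_\ell$, hence $A_0 \perp A_\ell$ and so $A_0 \cap A_\ell = \0$. This contradicts $A_0 \subsetneq A_\ell$, since then $A_0 \cap A_\ell = A_0 \ne \0$ (every set in $\SC$ is nonempty). Therefore $\TC_\SC$ has no cycle.

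I expect the only (mild) obstacle to be bookkeeping: pinning down the orientation convention so that a path in $\TC_\SC$ is genuinely ``represented'' by a sequence in the stated sense, and checking that doubling the cycle introduces no backtracking. Both are routine, and all the real content is already carried by \cref{paths_in_T}.
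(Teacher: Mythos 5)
Your proof is correct and takes essentially the same route as the paper: both apply \cref{paths_in_T} to a no-backtracking closed walk and derive a contradiction from strict monotonicity, the paper just shortcutting by replacing $A_0$ with its $\sim_\SC$-equivalent $A_n$ to get $A_n \subsetneq A_n$ directly, whereas you conclude via disjointness of distinct $\sim_\SC$-equivalent sets. (Doubling the cycle is unnecessary, since the single traversal $v_0,\dots,v_{\ell-1},v_0$ already has no backtracking by distinctness of the cycle's vertices.)
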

\begin{proof}
	Let $n \ge 2$ and let $(A_i)_{i \le n}$ represent a path in $\TC_\SC$ with no backtracking. Assuming that $[A_0]_\SC = [A_n]_\SC$, the sequence $(A_n, A_1, A_2, \hdots, A_n)$ still represents a path (the same one). But now \cref{paths_in_T} implies $A_n \subsetneq A_n$, a contradiction.
\end{proof}

\begin{theorem}\label{T_is_a_tree}
	For any self-dual\footnote{A collection of sets is \emph{self-dual} if it is closed under complements.}, nested, chain-vanishing collection of sets $\SC$, the graph $\TC_\SC$ is a tree.
\end{theorem}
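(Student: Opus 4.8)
The plan is to invoke the corollary immediately preceding the theorem, which already gives that $\TC_\SC$ is acyclic, and to prove the remaining assertion that $\TC_\SC$ is connected. Concretely, I will show that for any $A,B\in\SC$ the vertices $[A]_\SC$ and $[B]_\SC$ are joined by a path. This is exactly where the self-duality hypothesis is used: since $A^c,B^c\in\SC$, for every choice of signs $i,j\in\set{-1,1}$ the vertex $[A]_\SC$ equals or is adjacent to $[A^i]_\SC$ (through the edge $\edgec{A}$ when $i=-1$), and likewise $[B]_\SC$ to $[B^j]_\SC$, so it suffices to connect $[A^i]_\SC$ to $[B^j]_\SC$ for some well-chosen $i,j$. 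Since $\SC$ is nested, the pair $(A,B)$ has an empty corner $A^i\cap B^j=\0$; for such $i,j$ either $A^i=(B^j)^c$, in which case $[A^i]_\SC=[(B^j)^c]_\SC\in\set{[B]_\SC,[B^c]_\SC}$ and we are done, or $A^i\perp B^j$. Hence everything reduces to connecting $[C]_\SC$ to $[D]_\SC$ for two orthogonal sets $C,D\in\SC$.

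For orthogonal $C\perp D$, the idea is to descend from $C$ down to $D$. First, \cref{chain-vanishing==>exists_maximally_orthogonal} applied to $C\perp D$ produces $C_0\in[C]_\SC$ with $C_0\supseteq D$. I then recursively build a strictly decreasing sequence $C_0\supsetneq C_1\supsetneq\cdots$, all of whose terms contain $D$: as long as $C_n\ne D$ we have $D\subsetneq C_n$, hence $C_n^c\perp D$ (with $C_n^c\in\SC$ by self-duality), so \cref{chain-vanishing==>exists_maximally_orthogonal} supplies $C_{n+1}\in[C_n^c]_\SC$ with $C_{n+1}\supseteq D$; a one-line check, using that $C_{n+1}\ne C_n^c$ because $D\ne\0$, shows $C_{n+1}\perp C_n^c$ and therefore $C_{n+1}\subsetneq C_n$. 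This sequence must be finite, for an infinite strictly decreasing chain $(C_n)_{n\in\N}$ in $\SC$ every term of which contains the fixed set $D\in\SC$ would contradict chain-vanishing; so $C_m=D$ for some $m$. Finally, $C_{n+1}\sim_\SC C_n^c$ gives $[C_{n+1}]_\SC=[C_n^c]_\SC$, so the edge $\edgec{C_n}$ is precisely the one joining $[C_n]_\SC$ and $[C_{n+1}]_\SC$; therefore $[C]_\SC=[C_0]_\SC,[C_1]_\SC,\dots,[C_m]_\SC=[D]_\SC$ is a path in $\TC_\SC$, as needed.

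I do not expect a genuine obstacle here; apart from the lemmas already established, the argument is bookkeeping with complements. The two points calling for a little care are the reduction in the first paragraph — verifying that an empty corner of $(A,B)$, together with the freedom to pass between $[A]_\SC$ and $[A^c]_\SC$ and between $[B]_\SC$ and $[B^c]_\SC$, really does reduce the problem to the orthogonal case in every configuration — and the claim that the recursively constructed sequence $(C_n)$ is \emph{strictly} decreasing, since that is exactly what lets chain-vanishing force termination at $D$. Combining connectedness with the acyclicity already in hand shows that $\TC_\SC$ is a tree.
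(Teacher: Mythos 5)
Correct, and essentially the same argument as the paper's: both reduce to the orthogonal case via nestedness and self-duality, then iterate \cref{chain-vanishing==>exists_maximally_orthogonal} to produce a monotone sequence of representatives and invoke the chain-vanishing hypothesis. Your version is phrased constructively (build a decreasing chain $C_0\supsetneq C_1\supsetneq\cdots$ forced to terminate at $D$, which exhibits the path) rather than by contradiction (assume no path and derive an infinite increasing chain), and you re-derive the monotonicity step inline rather than citing \cref{paths_in_T}, but the content is identical.
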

\begin{proof}
	By the previous proposition, we only need to show connectedness, so fix distinct $[A]_\SC, [B]_\SC \in \SC / \sim_\SC$. By nestedness, we have $A^i \perp B^j$ for some $i,j \in \set{-1,1}$, and by the definition of $\TC_\SC$, it is enough to show that $[A^i]_\SC$ and $[B^j]_\SC$ are connected, so we may assume without loss of generality that $A \perp B$ to begin with. 
	
	Suppose towards a contradiction that there is no path connecting $[A]_\SC$ and $[B]_\SC$.
	
	\begin{claim*}
		There is an infinite sequence $(A_n)_{n \in \N} \subseteq \SC$ representing a path in $\TC_\SC$ and such that $A_0 = A$ and $A_n \perp B$ for all $n \in \N$.
	\end{claim*}
	\begin{pf}
		Putting $A_0 \defeq A$ we assume by induction that $(A_i)_{i \le n}$, $n \ge 0$, represents a path in $\TC_\SC$ and $A_n \perp B$. By \cref{chain-vanishing==>exists_maximally_orthogonal}, there is $C \in [A_n]_\SC$ with $C \supseteq B$. Thus, $A_{n+1} \defeq C^c$ is disjoint from $B$ and $(A_i)_{i \le n+1}$ represents a path in $\TC_\SC$. Because we assume that there is no path between $[A]_\SC$ and $[B]_\SC$, $[A_{n+1}]_\SC \ne [B]_\SC$, in particular, $A_{n+1} \ne B$, hence, $A_{n+1} \perp B$.
	\end{pf}
	
	For each $n \in \N$, $A_n \perp B \perp A_{n+1}$ implies $A_n^c \supseteq B$ and $A_{n+1} \cap B = \0$, so $A_n^c \ne A_{n+1}$. Therefore, by \cref{no_backtracking_equivalence}, the path $[A_0]_\SC, [A_1]_\SC, [A_2]_\SC, \hdots$ has no backtracking, so \cref{paths_in_T} implies that $(A_n)_{n \in \N}$ is an increasing chain, contradicting the chain-vanishing property.
\end{proof}

We now apply this theorem to an action $\Ga \actson^\al X$ of a group $\Ga$. The latter naturally induces an action of $\Ga$ on $\Pow(X)$ and, for $A \in \Pow(X)$, we denote by $\Stab_\al(A) \le \Ga$ the (setwise) stabilizer of $A$, omitting the subscript when the action is clear from the context.

\begin{cor}\label{action_on_C_gives_action_on_T}
	Let $\Ga \actson^\al X$ be an action of a group $\Ga$ on a set $X$ and let $\SC$ be a self-dual, nested, chain-vanishing collection of nonempty subsets of $X$. If $\SC$ is invariant under the action $\al$, then there is a tree $\TC_\SC$ of cardinality at most $|\SC|$, on which $\Ga$ acts such that the set of all (directed) edge-stabilizers is exactly $\set{\Stab_\al(A) : A \in \SC}$. Moreover, if the action $\al$ is such that for any $A \in \SC$, there is $\ga \in \Ga$ with $\ga \cdot_\al A \ne A$ and $\ga \cdot_\al A \cap A \ne \0$, then the action $\Ga \actson \TC_\SC$ has no fixed points.
\end{cor}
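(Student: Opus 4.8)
The plan is to transport the construction of $\TC_\SC$ through the action $\al$ and then read off the edge-stabilizers and the no-fixed-points statement. First I would check that $\TC_\SC$ is well-defined and genuinely a tree: $\SC$ is self-dual, nested, and chain-vanishing by hypothesis, so \cref{T_is_a_tree} applies directly and $\TC_\SC$ is a tree on the vertex set $\SC/\!\sim_\SC$. Its cardinality is at most $|\SC/\!\sim_\SC| \le |\SC|$, giving the cardinality bound. Next I would observe that because $\SC$ is $\al$-invariant, the equivalence relation $\sim_\SC$ is $\al$-invariant as well: $\sim_\SC$ is defined purely in terms of inclusion, complementation, and membership in $\SC$ (via $\SC$-maximal orthogonality), all of which are preserved by the bijections $x \mapsto \ga \cdot_\al x$ of $X$; hence $\ga$ carries $[A]_\SC$ to $[\ga \cdot_\al A]_\SC$ and $[A^c]_\SC$ to $[(\ga \cdot_\al A)^c]_\SC$. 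Consequently $\ga$ sends the edge $\edgec{A}$ to the edge $\edgec{\ga \cdot_\al A}$, so $\Ga$ acts on $\TC_\SC$ by graph automorphisms. This is the routine but essential bookkeeping step.

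For the edge-stabilizers: a directed edge of $\TC_\SC$ is, by \labelcref{item:multi-edges} of \cref{no_loops_multi-edges}, of the form $\edgec{A}$ for a \emph{unique} $A \in \SC$ with $A, A^c \in \SC$ (self-duality guarantees $A^c \in \SC$ automatically, so directed edges biject with $\set{A \in \SC : A^c \in \SC} = \SC$). Since $\ga$ fixes the directed edge $\edgec{A}$ if and only if $\ga \cdot_\al A$ defines the same directed edge, and the correspondence $A \leftrightarrow \edgec{A}$ is a bijection, $\ga$ fixes $\edgec{A}$ iff $\ga \cdot_\al A = A$, i.e.\ iff $\ga \in \Stab_\al(A)$. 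Ranging over all directed edges gives that the set of directed edge-stabilizers is exactly $\set{\Stab_\al(A) : A \in \SC}$, as claimed.

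Finally, for the no-fixed-points clause: suppose $\Ga$ fixes a point of $\TC_\SC$. A group acting on a tree without inversions and with a fixed point must fix a vertex (and if it inverts an edge it fixes that edge's midpoint, but then it stabilizes the unordered pair $\set{[A]_\SC, [A^c]_\SC}$); either way, there is some $A \in \SC$ with $\Ga$ stabilizing $[A]_\SC$, hence stabilizing the \emph{basis} $[A]_\SC \subseteq \SC$ setwise. I would argue that stabilizing this basis forces every $\ga \in \Ga$ to satisfy, for each $B \in [A]_\SC$, either $\ga \cdot_\al B = B$ or $\ga \cdot_\al B = C$ for some other $C \in [A]_\SC$; since $[A]_\SC$ is an orthogonal family, distinct members are orthogonal, so $\ga \cdot_\al B \ne B$ entails $\ga \cdot_\al B \cap B = \0$ (taking $C = \ga \cdot_\al B \perp B$). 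This contradicts the hypothesis that for $B = A$ there exists $\ga$ with $\ga \cdot_\al A \ne A$ and $\ga \cdot_\al A \cap A \ne \0$. The main obstacle I anticipate is making the ``fixed point on a tree $\Rightarrow$ fixed vertex or fixed edge'' step clean in the case of edge-inversions and checking that a $\Ga$-fixed \emph{midpoint} still yields a $\Ga$-invariant orthogonal family to which the hypothesis applies; I would handle the inversion case by noting it stabilizes the set $\set{A, A^c}$, and since $A \perp$ every other element of $[A]_\SC$ and $A^c$ dominates them, one still derives the same orthogonality dichotomy for a generator moving $A$ nontrivially.
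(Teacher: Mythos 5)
Your proposal follows the paper's proof almost verbatim in its three main steps: the tree and the cardinality bound come from \cref{T_is_a_tree}; the $\al$-invariance of complements, orthogonality, and containment (hence of $\sim_\SC$) gives the action on $\TC_\SC$; and the bijection $A \leftrightarrow \edgec{A}$ provided by \labelcref{item:multi-edges} of \cref{no_loops_multi-edges} identifies the directed edge-stabilizers with $\set{\Stab_\al(A) : A \in \SC}$. Your fixed-vertex argument is also the paper's: if $\Ga$ fixed the vertex $[A]_\SC$, then the $\ga$ provided by the hypothesis would satisfy $\ga \cdot_\al A \in [A]_\SC$, hence $\ga \cdot_\al A = A$ or $\ga \cdot_\al A \perp A$, contradicting $\ga \cdot_\al A \ne A$ and $\ga \cdot_\al A \cap A \ne \0$.

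Where you diverge is the excursion through the fixed-point dichotomy for actions on trees and the edge-inversion/midpoint case. This detour is unnecessary: a ``fixed point'' of the action on the graph $\TC_\SC$ means a fixed vertex, which is exactly what the paper's proof rules out (subdivision to remove inversions is performed only later, in the Bass--Serre application). Moreover, your patch for the inversion case is incorrect as stated: an element $\ga$ that swaps $[A]_\SC$ and $[A^c]_\SC$ need not stabilize $\set{A, A^c}$; it only forces $\ga \cdot_\al A \in [A^c]_\SC$, i.e.\ $\ga \cdot_\al A = A^c$ or $\ga \cdot_\al A \perp A^c$, and in the latter case $\ga \cdot_\al A$ is a nonempty proper subset of $A$, so that $\ga \cdot_\al A \ne A$ and $\ga \cdot_\al A \cap A \ne \0$ both hold and your ``orthogonality dichotomy'' yields no contradiction. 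So that sub-argument genuinely fails; it is harmless only because the case it addresses lies outside the statement being proved. With the midpoint discussion deleted and ``fixed point'' read as ``fixed vertex,'' your proof is correct and coincides with the paper's.
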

\begin{proof}
	Clearly, the action $\Ga \actson \SC$ respects complements, orthogonality, and containment, and hence also the equivalence relation $\sim_\SC$. This naturally induces an action $\Ga \actson \SC / \sim_\SC$ as well as on the set of edges of $\TC_\SC$; in other words, $\Ga$ acts on $\TC_\SC$. 
	
	For the edge-stabilizers, for $A \in \SC$, every $g \in \Stab(A)$ fixes the edge $\edgec{A}$. Conversely, if $\ga \in \Ga$ fixes the edge $\edgec{A}$, i.e. $\edgec{A} = \edgeC{\ga A}$, then \labelcref{item:multi-edges} of \cref{no_loops_multi-edges} applied to $A$ and $\ga A$ implies that $A = \ga A$, and hence, $\ga \in \Stab(A)$.
	
	Now assuming the hypothesis of the ``moreover'' part, let $[A]_\SC$ be a vertex of $\TC_\SC$ and let $\ga \in \Ga$ be as in this hypothesis. Then $\ga \cdot [A]_\SC \ne [A]_\SC$ because otherwise, $\ga \cdot_\al A \in [A]_\SC$, so either $\ga \cdot_\al A = A$ or $\ga \cdot_\al A \perp A$, a contradiction.
\end{proof}

\section{Application to ends of graphs}\label{sec:application_to_ends_of_graphs}

Throughout this section, by a \emph{graph} on a set $X$ we mean an irreflexive symmetric subset of $X^2$. Let $G$ be a connected graph on $X$.

\begin{notation}
	For sets $A,B \subseteq X$, let $\cobd_G (A,B)$ denote the set of edges incident (in $G$) to both $A$ and $B$. We write $\cobd_G A$ to denote $\cobd_G (A, A^c)$ and call this set the \emph{edge-boundary} (or \emph{coboundary}) of $A$. Below, we omit writing the subscript $G$, unless it is not clear from the context.
\end{notation}

\begin{defn}\label{defn:cut}
	A \emph{cut} in a connected graph $G$ is a subset $A \subseteq X$ such that $A$ and $A^c$ are infinite and $\cobd A$ is finite.
\end{defn}

We recall that a connected graph $G$ is said to \emph{have more than one end} if it admits a cut; we also call such a graph \emph{multi-ended}. Henceforth, we assume that $G$ has more than one end and we let $\CC(G)$ (or just $\CC$) denote the set of all cuts of $G$.

Letting $\Aut(G)$ denote the group of automorphisms of $G$, call a set of vertices or edges \emph{invariant}, if it is invariant under (i.e. setwise fixed by) the natural action of $\Aut(G)$. As the action of $\Aut(G)$ on $X$ naturally induces an action on $\Pow(X)$, we also call a collection $\SC \subseteq \Pow(X)$ \emph{invariant} if it is closed under this action of $\Aut(G)$.

Our goal is to find an invariant nested chain-vanishing subcollection of $\CC$ and we do this in two stages: first, we isolate an invariant chain-vanishing collection of cuts, and then, we restrict it further to a nested, but still invariant, subcollection.

\subsection{Thin and neat cuts}\label{subsec:thin-neat_cuts}

As our first restriction, we take the collection $\Cthin$ of all \emph{thin} cuts, which, by definition, are those cuts $C$ that minimize $|\cobd C|$, i.e. 
\[
|\cobd C| = k_0 \defeq \min_{A \in \CC} |\cobd A|.
\]
Below we show that $\Cthin$ is chain-vanishing.

Call a set $A \subseteq X$ \emph{$G$-connected} (or just \emph{connected}) if the induced subgraph $G \rest{A}$ is connected. Call a cut $A$ \emph{neat} if both $A$ and $A^c$ are connected.

\begin{lemma}\label{cuts_connectedness}
	For any cut $C$, $G \rest{C}$ has at most $|\cobd(C)|$-many connected components; in fact, letting $a$ and $b$ denote the numbers of finite and infinite connected components, respectively,
	\[
	a < |\cobd(C)| \text{ and } 1 \le b \le \frac{|\cobd(C)| - a}{k_0}.
	\]
	In particular, if $C$ is thin, then it is connected. Thus, thin cuts are neat.
\end{lemma}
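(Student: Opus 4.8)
The plan is to bound the components of $G\rest{C}$ by a counting argument: each component must "spend" edges of the finite set $\cobd(C)$, and every \emph{infinite} component must spend at least $k_0$ of them.

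First I would record the elementary fact that every connected component $D$ of $G\rest{C}$ has at least one $G$-edge to $C^c$. Since $D$ is a \emph{component} of the induced graph $G\rest{C}$, maximality gives no $G$-edge between $D$ and $C\setminus D$; if there were also no $G$-edge between $D$ and $C^c$, then $D$ would be a connected component of $G$ itself, hence $D = X$ by connectedness of $G$, contradicting $D \subseteq C \subsetneq X$ (as $C^c$ is infinite, hence $C \subsetneq X$). Since distinct components are disjoint and an edge has only two endpoints, the edges picked this way from distinct components are distinct elements of $\cobd(C)$, which already gives at most $|\cobd(C)|$ components.

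Then I would upgrade this for infinite components: if $D$ is infinite, the same "no $G$-edge to $C\setminus D$" fact shows $\cobd_G(D) = \cobd_G(D, C^c) \subseteq \cobd(C)$, so $\cobd_G(D)$ is finite; together with $D$ and $D^c \supseteq C^c$ both infinite, this makes $D$ itself a cut, so $|\cobd_G(D)| \ge k_0$. I would note that for distinct components the sets $\cobd_G(D)$ are pairwise disjoint (an edge lying in two of them would need an endpoint in each of two disjoint subsets of $C$, leaving no endpoint in $C^c$), and also disjoint from the single edge chosen for each finite component. Assigning to each of the $a$ finite components one such edge and to each of the $b$ infinite components its whole coboundary, all inside $\cobd(C)$, gives $a + b\,k_0 \le |\cobd(C)|$. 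Since $C$ is infinite and is a union of finitely many components, $b \ge 1$; rearranging yields $b \le \bigl(|\cobd(C)| - a\bigr)/k_0$, and since a cut in a connected graph has nonempty coboundary we have $k_0 \ge 1$, so $a \le |\cobd(C)| - k_0 < |\cobd(C)|$ and the total is $a + b \le a + b\,k_0 \le |\cobd(C)|$.

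Finally, for the "in particular" part: if $C$ is thin then $|\cobd(C)| = k_0$, so $a + b\,k_0 \le k_0$ with $b \ge 1$ forces $a = 0$ and $b = 1$, i.e. $C$ is connected; applying this to $C^c$ — which is a cut with $\cobd(C^c) = \cobd(C)$, hence also thin — gives $C^c$ connected too, so $C$ is neat. I do not expect a genuine obstacle here; the one point that must be stated carefully is that a component of the induced subgraph $G\rest{C}$ has no $G$-edges to the remainder of $C$, which is exactly what forces $\cobd_G(D) \subseteq \cobd(C)$ and drives both the crude bound and the refined inequality.
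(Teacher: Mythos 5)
Your argument is correct and follows the same strategy as the paper's proof: observe that each connected component $A$ of $G\rest{C}$ has coboundary $\cobd A$ contained in $\cobd C$ (using that $A$ has no edges to $C\setminus A$), that these coboundaries are pairwise disjoint, and that infinite components are themselves cuts so contribute at least $k_0$ edges, then count. You simply spell out the details that the paper leaves implicit (disjointness, $b\geq 1$, $k_0\geq 1$), so there is no substantive difference in method.
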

\begin{proof}
	The main observation is that for each connected component $A \subseteq C$ of $G \rest{C}$, $\cobd A \subseteq \cobd C$, so $\cobd C$ is the (disjoint) union of the $\cobd A$ with $A$ ranging over the connected components of $G \rest{C}$. If $A$ is infinite, then it is a cut, so $|\cobd A| \ge k_0$, and hence the inequality above. Finally, the neatness of thin cuts follows from their closedness under complements.
\end{proof}

For $k \ge 1$, let $\CC_k$ denote the collection of neat cuts, whose edge-boundary has exactly $k$ elements. By \cref{cuts_connectedness}, $\CC_{k_0} = \Cthin$.

\begin{remark}\label{remark:infinite-and-finite_bases}
	It is worth emphasizing that bases for $\Cthin$ can be infinite as well as finite, see \cref{fig:finite-basis,fig:infinite-basis}.
	
\begin{tabular}{@{\hspace{-\parindent}}c@{\hspace{-1.5cm}}c}
	\begin{minipage}{.5\textwidth}
	\begin{figure}[H]
			\includegraphics[scale=.5]{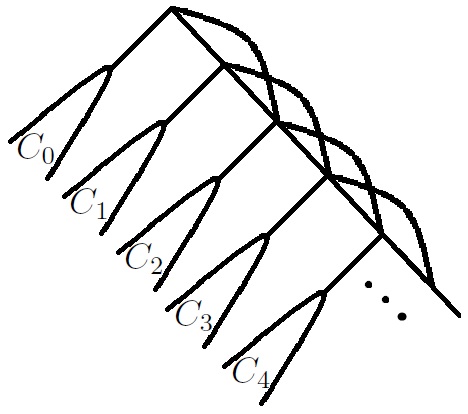}
		\caption{An infinite basis $\set{C_0,C_1,C_2,\hdots}$ for $\Cthin = \CC_{k_0}$ with $k_0 = 1$.}\label{fig:infinite-basis}
	\end{figure}
	\end{minipage}
	&
	\begin{minipage}{.53\textwidth}
		\begin{figure}[H]
			\includegraphics[scale=.5]{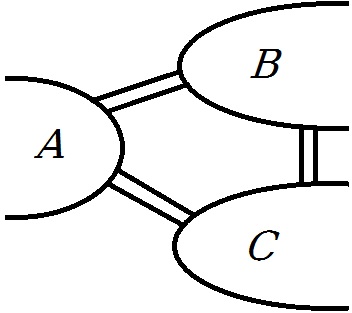}
			\caption{A finite basis $\set{A,B,C}$ for $\Cthin = \CC_{k_0}$ with $k_0 = 2$.}\label{fig:finite-basis}
		\end{figure}
	\end{minipage}
\end{tabular}
\end{remark}

\begin{prop}\label{C_k_has_chain-vanishing}
	For any $k \ge 1$, $\CC_k$ is chain-vanishing. In particular, $\Cthin$ is chain-vanishing.
\end{prop}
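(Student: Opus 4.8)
The plan is to show that if $(C_n)_{n\in\N}$ is a chain in $\CC_k$ then it is $\CC_k$-vanishing, i.e. there is no $B\in\CC_k$ with $C_n\supseteq B$ for all $n$ (in the decreasing case) or $C_n\cap B=\0$ for all $n$ (in the increasing case). By passing to the dual chain $(C_n^c)_{n\in\N}$ — which is still a chain in $\CC_k$ since $\CC_k$ is closed under complements — the two cases are interchanged, so it suffices to treat, say, a $\subsetneq$-decreasing chain $(C_n)$ and rule out a $B\in\CC_k$ with $B\subseteq C_n$ for every $n$.

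The key is to track the edge-boundaries. First I would observe that, since each $C_n$ is neat, $G\rest{C_n}$ and $G\rest{C_n^c}$ are connected, and likewise for $B$. Now consider the corners of the pair $(C_{n+1},C_n)$: since $C_{n+1}\subsetneq C_n$, the corner $C_{n+1}\cap C_n^c$ is empty, so $C_{n+1}$ and $C_n$ are nested. The sets $C_{n+1}$ (inside $C_n$) and $C_n^c$ partition off from $C_n\setminus C_{n+1}$, and the coboundary edges split accordingly: each edge of $\cobd C_{n+1}$ goes either into $C_n\setminus C_{n+1}$ or into $C_n^c$, and each edge of $\cobd C_n$ comes either from $C_{n+1}$ or from $C_n\setminus C_{n+1}$. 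Writing $d_n\defeq |\cobd(C_{n+1}, C_n\setminus C_{n+1})|$ for the number of edges "between consecutive levels", a counting argument of the type in the proof of \cref{cuts_connectedness} gives $k = |\cobd C_{n+1}| = d_n + |\cobd(C_{n+1}, C_n^c)|$ and $k = |\cobd C_n| = d_n + |\cobd(C_n\setminus C_{n+1}, C_n^c)|$... — the upshot I want is that the sequence of "cross sections" $\cobd(C_n, C_n^c)$ cannot keep changing forever: more precisely, the edges separating $B$ from $C_n\setminus B$ form a subset related to $\cobd B$ of size $k$, while $C_n\setminus B$ is a nested "annular" region whose inner and outer boundaries each have at most $k$ edges. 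Since $G$ is locally finite is NOT assumed, I must be careful — but finiteness of all the coboundaries involved is what saves us: the region $C_0\setminus B$ has finite edge-boundary (contained in $\cobd C_0\cup\cobd B$), hence only finitely many edges total can be incident to the "separating" structure, which forces the nested chain to stabilize, contradicting strict monotonicity.

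Let me restructure that into the cleanest route: since $B\subseteq C_n$ for all $n$ and $\bigcap_n C_n\supseteq B$ is infinite, consider the telescoping. I would look at $A_n\defeq C_n\setminus C_{n+1}$. These are pairwise disjoint nonempty sets (nonempty by strict monotonicity), and each edge incident to $A_n$ lies in $\cobd C_n\cup\cobd C_{n+1}$, so $\cobd A_n\subseteq\cobd C_n\sqcup\cobd C_{n+1}$ has at most $2k$ edges. The crucial point: a path in $G$ from $B$ (which lies below all $C_n$) out to $C_0^c$ must cross $\cobd C_n$ for every $n$, hence must use at least one edge of $\cobd C_n$ for each $n$; but actually the sharper statement is that $\cobd C_n$ and $\cobd C_m$ for $n\ne m$ can share edges only in a controlled way. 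I would argue: since each $\cobd C_n$ has exactly $k$ edges and all these $\cobd C_n$ are "parallel cuts" separating the fixed infinite set $B$ from the fixed infinite set $C_0^c$, and each such edge set lies in the finite region... — the honest obstacle here is exactly pinning down *why* infinitely many size-$k$ nested separators between two fixed ends cannot exist without local finiteness.

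The main obstacle, then, is the absence of local finiteness: I cannot simply say "$\cobd C_0$ is finite so only finitely many edges are available." Instead I expect the argument to run: fix a finite edge set $F\defeq\cobd C_0$; a shortest path argument shows that for $n$ large, $\cobd C_n$ must be "far" from $F$... but that again wants local finiteness. The resolution the paper surely intends is connectivity of neat cuts together with the fact that $B$, being a cut with $|\cobd B|=k$ minimal-among-$\CC_k$-shaped... — more likely: if $B\subseteq C_n$ for all $n$, then $C_n^c\subseteq B^c$, and since $B^c$ is connected and each $C_n^c$ is connected with the *same* bound $k$ on $|\cobd C_n^c|$, while $\bigcup_n C_n^c$ is an increasing union exhausting $B^c$, we can find a single $n$ with $\cobd C_n^c\subseteq$ a fixed finite set, and then connectivity of $C_n\setminus B$ (sitting between two boundaries of total size $\le 2k$) pins every vertex of $C_n\setminus B$ within bounded "edge-distance" — and since that region contains the infinite descending tail $\bigcap C_m$... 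I would conclude the tail is empty, contradiction. I will present the argument via this exhaustion-plus-connectivity idea, isolating the finite-combinatorial core as the lemma that a connected graph with an edge-cut of size $k$ separating it into two connected infinite halves cannot be refined infinitely often by nested size-$k$ edge-cuts, and I expect verifying *that* core claim to be where the real work lies.
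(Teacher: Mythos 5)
Your reduction to a strictly decreasing chain via closure under complementation is correct and matches the paper, and you have correctly diagnosed the real obstacle: $G$ is not assumed locally finite, so finiteness of $\cobd C_0$ alone does not bound the total supply of edges. But the proof does not close. You circle through several partial attempts (the ``annular'' region $C_0\setminus B$, the disjoint slabs $A_n=C_n\setminus C_{n+1}$, the exhaustion $\bigcup_n C_n^c$), run back each time into the same missing step, and end by extracting a ``finite-combinatorial core'' lemma that is essentially a restatement of the proposition, explicitly flagging that you have not verified it. Several of the intermediate moves are also unsound without local finiteness: connectivity of a region with small edge-boundary does not ``pin every vertex within bounded edge-distance,'' and $\bigcup_n C_n^c$ need not exhaust $B^c$ (the whole point is to show $\bigcap_n C_n$ is small, so you cannot assume that).

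The paper resolves this with a direct recursive construction that sidesteps local finiteness entirely, and in fact proves the stronger statement that $C_\omega\defeq\bigcap_n C_n=\0$ for any strictly decreasing chain in $\CC_k$. Supposing $C_\omega\ne\0$, one builds pairwise distinct edges $e_0,\dots,e_k$ such that each $e_m$ lies in $\cobd C_n$ for all large $n$; then some single $\cobd C_n$ contains all $k+1$ of them, contradicting $|\cobd C_n|=k$. The recursion step: given $e_0,\dots,e_{\ell-1}$ and $n$ large enough that they all lie in $\cobd C_n$, strictness gives $C_n\setminus C_\omega\ne\0$, and $G$-connectedness of $C_n$ (this is where neatness of $\CC_k$ is used) yields an edge $e_\ell=\{u,v\}$ with $u\in C_n\setminus C_\omega$ and $v\in C_\omega$. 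Both endpoints lie in $C_n$, so $e_\ell\notin\cobd C_n\supseteq\{e_m\}_{m<\ell}$, guaranteeing $e_\ell$ is new; and choosing $m>n$ large enough that $u\notin C_m$ puts $e_\ell\in\cobd C_m$ as required. This is the single idea your proposal never lands on: use connectivity to manufacture a fresh edge crossing from the shrinking part of $C_n$ into the core $C_\omega$, then let the chain descend past one endpoint of that edge so it becomes a boundary edge.
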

\begin{proof}
	Because $\CC_k$ is closed under complements, it is enough to prove that there are no proper decreasing chains. Assuming towards a contradiction that $(C_n)_{n \in \N} \subseteq \CC_k$ is a strictly decreasing chain with $C_\w \defeq \bigcap_{n \in \N} C_n \ne \0$, we recursively build a sequence $(e_m)_{m \in \N}$ of pairwise distinct edges of $G$ such that for each $m \in \N$, $e_m \in \cobd C_n$ for all large enough $n \in \N$. Granted such a sequence, we get $n \in \N$ with $\cobd C_n \supseteq \set{e_0, e_1, \ldots, e_k}$, contradicting $|\cobd C_n| = k$.
	
	Suppose by induction that a desired sequence $(e_m)_{m < \ell}$ of length $\ell \ge 0$ has already been constructed. Thus, there is $n$ large enough such that $\cobd C_n \supseteq \set{e_m}_{m < \ell}$. By the strictness of our chain, $C_n \setminus C_\w \ne \0$. Because $C_n$ is connected, there is a path connecting the set $C_n \setminus C_\w$  to $C_\w$ (i.e. a vertex in one to a vertex in the other). Consequently, there exists $e_\ell \in \cobd(C_n \setminus C_\w, C_\w)$; in particular, $e_\ell \notin \cobd C_n \supseteq \set{e_m}_{m < \ell}$. Let $\set{u,v} = e_\ell$ with $u \in C_n \setminus C_\w$ and $v \in C_\w$, and let $m > n$ be large enough such that $u \notin C_m$. Because $v \in C_m$ and $u \in C_m^c$, $e_\ell \in \cobd C_m$, concluding the recursive construction.
\end{proof}

\subsection{Minimizing the degree of non-nestedness}\label{subsec:minimizing_nonnestedness}

This subsection is almost entirely taken from \cite{Kron:Stallings}*{Sections 2 and 3} and simply rewritten here in our terminology for the sake of keeping the paper self-contained.

Note that for any $k \ge 1$, $\CC_k$ is invariant, so, by \cref{C_k_has_chain-vanishing}, we could take any $\CC_k$ as our first restriction, as long as it is nonempty. Now consider the non-nestedness graph $\NNG$ on $\Pow(X)$, i.e.
\[
\set{A,B} \in \NNG \defequiv \text{$A$ and $B$ are not nested}.
\]
The nested subcollections of $\CC_k$ are exactly the $\NNG$-independent ones, and our goal is to find one that is invariant. \cref{non-nestedness_is_locally_finite} below implies that $\NNGk \defeq \NNG \rest{\CC_k}$ is locally finite, and we take as a candidate the subcollection $\CC_k' \subseteq \CC_k$ of all vertices with minimum $\NNGk$-degree. This is clearly invariant, but may not be $\NNG$-independent in general. However, when $k$ is minimum (i.e. $k = k_0$, so $\CC_k = \Cthin$), $\CC_k'$ turns out to indeed be $\NNG$-independent. The requirement of $k$ being minimum is used through \cref{opposite_corners:thin_cuts} below and is essential for the argument.

\subsubsection{The local finiteness of $\NNGk$}

\begin{defn}
	Call a set of edges $F \subseteq G$ an \emph{edge-cut} if $F = \cobd A$ for some cut $A$.
\end{defn}

\begin{obs}\label{neat_cut=minimal_sep}
	A cut $A$ is neat if and only if $\cobd A$ is a minimal edge-cut, i.e. no proper subset of $\cobd A$ is an edge-cut.
\end{obs}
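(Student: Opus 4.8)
The plan is to prove the two implications separately, using \cref{cuts_connectedness} for the ``only if'' direction and the connectedness of $A$ and $A^c$ directly for the ``if'' direction.

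\textbf{Minimality $\Rightarrow$ neatness (contrapositive).} Suppose $A$ is a cut that is not neat, so one of $A$, $A^c$ is disconnected; since $\cobd A = \cobd A^c$, replacing $A$ by $A^c$ if necessary, we may assume $A$ itself is disconnected. By \cref{cuts_connectedness}, $G \rest A$ has an infinite connected component $A_0$, and since $A$ is disconnected it also has at least one further, nonempty component $A_1$. Now $A_0$ is a cut: it is infinite, and $A_0^c \supseteq A^c$ is infinite. Moreover, from the proof of \cref{cuts_connectedness}, $\cobd A$ is the disjoint union of the coboundaries of the components of $G\rest A$, and $\cobd A_1 \ne \0$ because $G$ is connected; hence $\cobd A_0 \subsetneq \cobd A$, so $\cobd A$ is not a minimal edge-cut.

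\textbf{Neatness $\Rightarrow$ minimality.} Suppose $A$ is neat and, towards a contradiction, that $B$ is a cut with $\cobd B \subsetneq \cobd A$. The key point is that every edge of $\cobd B$ lies in $\cobd A$, i.e.\ runs between $A$ and $A^c$. Consequently there is no edge of $G$ between the corners $A \cap B$ and $A \cap B^c$ — such an edge would lie in $\cobd B \subseteq \cobd A$ yet have both endpoints in $A$ — and likewise none between $A^c \cap B$ and $A^c \cap B^c$. Since $A$ and $A^c$ are connected and each is partitioned by $B$ into these two corners, one of $A \cap B$, $A \cap B^c$ must be empty and one of $A^c \cap B$, $A^c \cap B^c$ must be empty. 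A short case analysis over the four combinations shows $B \in \set{\0, X, A, A^c}$: two of the cases force $B = \0$ or $B = X$, contradicting that $B$ is a cut, while the other two force $B = A$ or $B = A^c$, contradicting $\cobd B \subsetneq \cobd A$.

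The only mildly delicate point, which I would treat as the main obstacle, is this final case analysis in the second direction — making sure all four combinations of vanishing corners are handled and that each genuinely collapses $B$ to one of $\0, X, A, A^c$. Everything else follows immediately from the definitions and \cref{cuts_connectedness}.
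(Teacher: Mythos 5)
Your proof is correct: the forward direction correctly extracts an infinite component $A_0$ of $G \rest{A}$ via \cref{cuts_connectedness} and uses the nonempty coboundary of a second component to get $\cobd A_0 \subsetneq \cobd A$, and the backward direction's empty-corner case analysis (using $\cobd A = \cobd A^c$ to dispose of the $B = A^c$ case) is sound. The paper states this observation without proof, and your argument is exactly the routine verification it leaves to the reader.
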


\begin{lemma}\label{fin-many_separators_containing_edge}
	For every $k \ge 1$, each edge $e \in G$ belongs to only finitely many minimal edge-cuts of size $k$.
\end{lemma}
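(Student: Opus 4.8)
The plan is to fix an edge $e = \set{u, v}$ and show that only finitely many minimal edge-cuts of size $k$ contain $e$. The key idea is that a minimal edge-cut $F = \cobd A$ of size $k$ is, by \cref{neat_cut=minimal_sep}, the coboundary of a \emph{neat} cut $A$, so both $A$ and $A^c$ are connected. Using the edge $e$ as an ``anchor'', I would argue that $A$ cannot reach too far from $e$ without using up the budget of $k$ boundary edges — more precisely, that $A$ (and symmetrically $A^c$) must be contained in a bounded-radius ball around one of the endpoints of $e$, after removing the (at most $k$) edges of $F$. Since there are only finitely many edges of $G$ within bounded distance of $e$ (here one needs $G$ to be locally finite, which should be available in this section — cuts in the relevant graphs have finite coboundary, and for Stallings the Cayley graph is locally finite; in any case the statement as used presumably presupposes local finiteness), there are only finitely many possibilities for $F$, and then only finitely many possibilities for $A$ given $F$ because $A$ is a union of connected components of $G - F$.

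The main steps, in order, are as follows. First, given a minimal edge-cut $F$ of size $k$ containing $e$, take the neat cut $A$ with $\cobd A = F$ (exists by \cref{neat_cut=minimal_sep}, after possibly replacing $A$ by $A^c$ we may assume $u \in A$; if $v \in A$ too then reselect so that $e$ is genuinely a boundary edge, i.e. $u \in A$, $v \in A^c$). Second, I would bound the ``depth'' of $A$: consider the graph $G - F$; both $A$ and $A^c$ are connected in it (neatness), and I want to show $A$ is within distance $\le f(k)$ of $u$ in $G$ for some function $f$ depending only on $k$. The mechanism: if some vertex of $A$ were very far from $u$, one builds many edge-disjoint paths or a long ``frontier'' forcing $|\cobd A| > k$ — this is where minimality/neatness of the cut is doing work, since for a neat cut the coboundary is a minimal separator and cannot have redundant edges. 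Third, once $A \subseteq B(u, f(k))$, the boundary $F = \cobd A$ consists of edges incident to this finite ball, of which there are finitely many; hence finitely many choices of $F$. Fourth, for each such $F$, the cut $A$ is a union of connected components of $G - F$, and $G - F$ has finitely many components meeting the finite ball, so finitely many choices of $A$, hence of $F$ — actually $F$ is what we are counting, so this last step just confirms finiteness.

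The step I expect to be the main obstacle is the second one: quantitatively bounding how far a neat cut of coboundary size $k$ can extend from a given boundary edge. The clean way is probably a Menger-type / flow argument: if $A$ contains a vertex $w$, then there is no way to separate $u$ from $w$ inside $A$ using fewer than... hmm — rather, one should argue contrapositively that a neat cut with small coboundary that contains points far from $e$ would have to ``fold back'', creating extra boundary edges or violating minimality of $F$ as a separator. Alternatively — and this may be the intended route given the flavour of the surrounding arguments — one avoids an explicit radius bound and instead argues directly by a pigeonhole/compactness recursion: if infinitely many distinct minimal edge-cuts of size $k$ contained $e$, one could extract from them a sequence of neat cuts whose coboundaries all contain $e$ but which ``escape to infinity'', and then (much as in the proof of \cref{C_k_has_chain-vanishing}) produce $k+1$ pairwise distinct edges simultaneously lying in the coboundary of one of them, a contradiction. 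I would try the compactness/recursion approach first, mirroring the structure of the proof of \cref{C_k_has_chain-vanishing}, since it fits the paper's established toolkit; the explicit-radius argument is the fallback.
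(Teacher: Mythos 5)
Your primary approach has a fatal flaw: in step two you claim that the neat cut $A$ (with $u \in A$) is contained in a bounded ball $B(u, f(k))$, but $A$ is by definition \emph{infinite} (both $A$ and $A^c$ are infinite in every cut), so this cannot be true. Even if one reinterprets your claim as bounding only the vertex-boundary $\bd A$ or the edge set $\cobd A$, you have not offered any argument, and a Menger-type bound of the kind you gesture at is nontrivial here. Moreover, your whole strategy leans on local finiteness of $G$ (``only finitely many edges within bounded distance of $e$''), which is \emph{not} assumed in this section of the paper --- and is not needed for the lemma. So your plan as written does not close.

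Your fallback (``compactness/recursion, as in the proof of \cref{C_k_has_chain-vanishing}'') is also not a proof: the mechanism there crucially exploits that the sequence of cuts forms a strictly decreasing chain, while the hypothetical infinitely many minimal edge-cuts through $e$ need have no monotone structure, and ``escape to infinity'' is not an argument.

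The paper's proof is a clean induction on $k$ which sidesteps all geometric bounds. Base case $k=1$ is trivial (the only minimal edge-cut of size $1$ containing $e$ is $\set{e}$ itself). For the inductive step, suppose some minimal edge-cut of size $k+1$ contains $e = \set{u,v}$. One first shows there is a path $P$ in $G - \set{e}$ connecting $u$ to $v$: take a neat cut $A$ with $\cobd A \supsetneq \set{e}$, $u \in A$, pick another boundary edge $e' = \set{u',v'}$ with $u' \in A$, and splice a path in $G\rest A$ from $u$ to $u'$, the edge $e'$, and a path in $G\rest{A^c}$ from $v'$ to $v$. Now any minimal edge-cut $F$ of size $k+1$ containing $e$ has $F \setminus \set{e}$ a minimal edge-cut of size $k$ in $G - \set{e}$ (removing $e$ preserves the connectivity of $A$ and $A^c$), and since $F \setminus \set{e}$ still separates $u$ from $v$, it must contain an edge of $P$. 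By the induction hypothesis applied in $G - \set{e}$, each edge of the finite path $P$ lies in only finitely many minimal size-$k$ edge-cuts, and we are done. Note this argument never uses local finiteness.
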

\begin{proof}
	We prove this by induction on $k$. The base case $k=1$ is obvious, so assume the statement is true for $k \ge 1$. If $e$ does not belong to any minimal edge-cut of size $k+1$, we are done, so suppose it does. Let $u,v$ be the vertices incident to $e$.
	
	\begin{claim*}
		There is a path $P$ in $G - \set{e}$ connecting $u$ to $v$; in particular, $G \setminus \set{e}$ is connected.
	\end{claim*}
	\begin{pf}
		By \cref{neat_cut=minimal_sep} and because $k+1 \ge 2$, there is a neat cut $A$ with $\cobd A \supsetneq \set{e}$ and $u \in A$. Let $e' \defeq \set{u',v'}$ be another edge in $\cobd A$ with $u' \in A$. Because $A$ is $G$-connected, there is a path $P_A$ in $G \rest{A}$ connecting $u$ to $u'$. Similarly, there is a path $P_{A^c}$ in $G \rest{A^c}$ connecting $v'$ and $v$. Thus, the path $P_A \conc e' \conc P_{A^c}$ lies in $G \setminus \set{e}$ and connects $u$ to $v$.
	\end{pf}
	
	Every minimal edge-cut of size $k+1$ containing $e$ becomes a minimal edge-cut of size $k$ in $G - \set{e}$ and must contain at least one edge lying on the path $P$. But by the induction hypothesis applied to $G - \set{e}$, every edge lying on $P$ belongs to only finitely many minimal edge-cuts of size $k$ in $G - \set{e}$, so the fact that $P$ is finite concludes the proof.
\end{proof}

\begin{notation}
For a set $A \subseteq X$, let $N_k^G(A)$ (or just $N_k(A)$) denote the set of all cuts in $\CC_k$ that are not nested with $A$; call it the \emph{$k$-neighborhood} of $A$.
\end{notation}

\begin{notation}
	For a set $A \subseteq X$, let $\bd_G A$ (or just $\bd A$) denote the set of vertices in $A$ that are adjacent to vertices in $A^c$ (equivalently, incident to edges in $\cobd A$); call $\bd A$ the \emph{vertex-boundary} (or just \emph{boundary}) of $A$.
\end{notation}

\begin{prop}\label{non-nestedness_is_locally_finite}
	For every $k \ge 1$ and set $A \subseteq X$ with finite edge-boundary, $N_k(A)$ is finite.
\end{prop}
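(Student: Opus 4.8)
The plan is to reduce the statement to \cref{fin-many_separators_containing_edge} by showing that every $C \in N_k(A)$ carries an edge of its coboundary $\cobd C$ inside a single finite set of edges determined by $A$. To set this up, first note that since $\cobd A$ is finite, so is the vertex-boundary $\bd A$, as each edge of $\cobd A$ puts at most one vertex into $\bd A$. Using that $G$ is connected, fix for every pair of vertices $u,v \in \bd A$ a finite path $P_{u,v}$ in $G$ from $u$ to $v$, and let $E_0$ be the (finite) set of all edges appearing on these paths.

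The key step is the observation that if a neat cut $C \in \CC_k$ is not nested with $A$, then $\bd A$ meets both $C$ and $C^c$. Indeed, non-nestedness means all four corners $A^i \cap C^j$ are nonempty; since $C$ is neat, $G\rest C$ is connected, so any path in $G\rest C$ from a vertex of $A \cap C$ to a vertex of $A^c \cap C$ must traverse an edge of $\cobd A$, and the endpoint of that edge lying in $A$ is then a vertex of $\bd A \cap C$. Running the same argument inside $C^c$ (connected as well, by neatness of $C$) produces a vertex of $\bd A \cap C^c$. Picking $u \in \bd A \cap C$ and $v \in \bd A \cap C^c$, the fixed path $P_{u,v}$ starts in $C$ and ends in $C^c$, hence traverses an edge of $\cobd C$; thus $\cobd C \cap E_0 \ne \0$.

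This gives $N_k(A) \subseteq \bigcup_{e \in E_0} \set{C \in \CC_k : e \in \cobd C}$, so it suffices to see that for each fixed edge $e$ only finitely many $C \in \CC_k$ have $e \in \cobd C$. The map $C \mapsto \cobd C$ sends such $C$ into the collection of minimal edge-cuts of size $k$ containing $e$ (minimality by \cref{neat_cut=minimal_sep}), which is finite by \cref{fin-many_separators_containing_edge}; moreover this map is at most two-to-one, since if $\cobd C = \cobd C' =: F$ for neat cuts $C, C'$, then deleting $F$ from $G$ leaves exactly the two connected components $C$ and $C^c$ (no edge of $\cobd C$ is internal to $C$ or to $C^c$, and $G\rest C$, $G\rest{C^c}$ are connected), forcing $C' \in \set{C, C^c}$. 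Hence each $\set{C \in \CC_k : e \in \cobd C}$ is finite and $N_k(A)$ is a finite union of finite sets. I expect the boundary observation of the second paragraph — extracting a vertex of $\bd A$ inside each of $C$ and $C^c$, which is exactly where connectedness of both $C$ and $C^c$ (i.e. neatness) is used — to be the main point; the two-to-one count and the passage to the finite localizing set $E_0$ are routine once that is in hand.
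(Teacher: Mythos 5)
Your proof is correct and follows essentially the same route as the paper's: extract a vertex of $\bd A$ in each of $C$ and $C^c$ using neatness and non-nestedness, force $\cobd C$ to meet a fixed finite set of edges coming from chosen paths between boundary vertices, and finish with \cref{neat_cut=minimal_sep} and \cref{fin-many_separators_containing_edge}. Your explicit two-to-one count for $C \mapsto \cobd C$ is a small extra precision the paper leaves implicit, but it does not change the argument.
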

\begin{proof}
	First, we relate the sets in $N_k(A)$ to vertices in $\bd A$.
	
	\begin{claim*}
		For any $B \in N_k(A)$, there are distinct vertices $v_-, v_+ \in \bd A$ such that for every path $P$ connecting them, $\cobd B$ contains an edge lying on $P$.
	\end{claim*}
	\begin{pf}
		For each $i \in \set{-1,1}$, because $B^i$ is connected and the corners $A \cap B^i$ and $A^c \cap B^i$ are nonempty, $\cobd (A \cap B^i, A^c \cap B^i) \ne \0$, so there is a vertex $v_i \in A \cap B^i$ that is incident to an edge in $(A \cap B^i, A^c \cap B^i)$. In particular, $v_i \in \bd (A) \cap B^i$.
		
		Now, $v_1 \in B$ and $v_{-1} \in B^c$, so they are distinct. Moreover, any path connecting them has to intersect $\cobd B$.
	\end{pf}
	
	Thus, for each distinct pair $u, v \in \bd A$, we fix a path $P_{u,v}$ connecting them and we let $F$ be the set of edges that lie on at least one of these paths. Because $\bd A$ is finite and for each $u, v \in \bd A$, the number of edges lying on $P_{u,v}$ is finite, $F$ is finite. By the claim, for each $B \in N_k(A)$, $\cobd B$ contains an edge from $F$. But $\cobd B$ is a minimal edge-cut of size $k$ and, by \cref{fin-many_separators_containing_edge}, each edge is contained in only finitely many such edge-cuts, so the finiteness of $F$ concludes the proof.
\end{proof}

\begin{notation}
	For a set $A \subseteq X$, let $\nd^G(A)$ (or just $\nd(A)$) denote $|N_k(A)|$ and call it the \emph{$k$-degree} of $A$.
\end{notation}

\subsubsection{Moving from $\NNG$-neighbors to their opposite corners}

\begin{terminology}
	For sets $A,B$, corners $A^{i_0} \cap B^{j_0}$ and $A^{i_1} \cap B^{j_1}$ are said to be \emph{opposite} if $i_0 i_1 = j_0 j_1 = -1$. For a corner $C$ of $(A,B)$, we denote its opposite corner by $-C$.
\end{terminology}

Our local goal is to show that moving from $\NNG$-neighbors to their opposite corners lowers the $k$-degree, provided all of the sets involved are in $\CC_k$. This will imply that the thin cuts with minimum $k_0$-degree are $\NNG$-independent, i.e. nested.

\begin{lemma}\label{N(A_cap_B)_subset_N(A)_cup_N(B)}
	For sets $A,B,C$, if $A^c \cap B^c \ne \0$ and $C$ is not nested with $A \cap B$, then $C$ is not nested with either $A$ or $B$.
\end{lemma}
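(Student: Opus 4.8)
The plan is to argue by contradiction. Suppose $C$ were nested with $A$ \emph{and} with $B$; I will deduce that this violates one of the two hypotheses, namely $A^c \cap B^c \ne \0$ and ``$C$ is not nested with $A \cap B$''.

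First I would extract from the non-nestedness of $C$ and $A \cap B$ the part of the information that does not involve the complement $(A \cap B)^c$: the corners $C \cap (A \cap B)$ and $C^c \cap (A \cap B)$ are nonempty, and hence so are $C \cap A$, $C \cap B$, $C^c \cap A$, and $C^c \cap B$. Now, $C$ being nested with $A$ means one of the four corners of $(C,A)$ is empty; since $C \cap A \ne \0$ and $C^c \cap A \ne \0$, the empty one must be $C \cap A^c$ or $C^c \cap A^c$, i.e.\ $C \subseteq A$ or $A^c \subseteq C$. Symmetrically, nestedness of $C$ with $B$ gives $C \subseteq B$ or $B^c \subseteq C$. (This is essentially \cref{alternative_for_pair} applied to the nested pairs $(C,A)$ and $(C,B)$, with the orthogonality branch ruled out by the nonempty corners just found.)

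This leaves four cases. If $C \subseteq A$ and $C \subseteq B$, then $C \subseteq A \cap B$, so the corner $C \cap (A \cap B)^c$ is empty, contradicting non-nestedness of $C$ with $A \cap B$. If $A^c \subseteq C$ and $B^c \subseteq C$, then $(A \cap B)^c = A^c \cup B^c \subseteq C$, i.e.\ $C^c \subseteq A \cap B$, so the corner $C^c \cap (A \cap B)^c$ is empty, again a contradiction. In the two remaining mixed cases the hypothesis $A^c \cap B^c \ne \0$ enters: if $C \subseteq A$ and $B^c \subseteq C$, then $B^c \subseteq A$, so $A^c \cap B^c = \0$; and symmetrically, if $A^c \subseteq C$ and $C \subseteq B$, then $A^c \subseteq B$, so again $A^c \cap B^c = \0$. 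Each case contradicts a hypothesis, so $C$ cannot be nested with both $A$ and $B$.

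I do not expect a genuine obstacle here; the only care needed is the bookkeeping — noticing that the two corners $C \cap (A \cap B)$ and $C^c \cap (A \cap B)$ are precisely what is needed to collapse each nestedness alternative into a containment, and that $A^c \cap B^c \ne \0$ is exactly what rules out the two ``diagonal'' configurations $\{C \subseteq A,\ B^c \subseteq C\}$ and $\{A^c \subseteq C,\ C \subseteq B\}$.
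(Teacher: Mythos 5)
Your proof is correct: the key step of collapsing nestedness of $(C,A)$ and $(C,B)$ into the containments $C \subseteq A$ or $A^c \subseteq C$ (respectively $C \subseteq B$ or $B^c \subseteq C$) is justified by the nonempty corners $C \cap A \cap B$ and $C^c \cap A \cap B$, and each of the four resulting cases does contradict one of the hypotheses. This is essentially the paper's argument run contrapositively — the paper argues directly, picking the side $C^i$ that meets $A^c \cap B^c$ and then exhibiting all four corners of $(C,A)$ or of $(C,B)$ — so both proofs amount to the same elementary corner bookkeeping.
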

\begin{proof}
	For some $i \in \set{1, -1}$, $C^i$ intersects $A^c \cap B^c$. Also, $C^{-i}$ intersects $A^c \cup B^c$. If $C^{-i}$ intersects $A^c$, then $C$ is not nested with $A$; otherwise, it is not nested with $B$.
\end{proof}

\begin{lemma}\label{opposite_corners:nestedness}
	For sets $A,B,C \subseteq X$, if there are opposite corners of $(A,B)$ such that $C$ is not nested with either of them, then $C$ is not nested with either of $A,B$.
\end{lemma}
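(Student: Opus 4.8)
The plan is to unwind ``not nested'' into the statement that all four corners are nonempty, and then to observe that non-emptiness of the appropriate corners of $(C, A \cap B)$ and of $(C, A^c \cap B^c)$ directly forces non-emptiness of all corners of $(C,A)$ and of $(C,B)$. Recall that $C$ is not nested with a set $D$ precisely when the four corners $C^i \cap D^j$, $i,j \in \set{-1,1}$, are all nonempty.

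First I would normalize the opposite pair appearing in the hypothesis. The two opposite pairs of corners of $(A,B)$ are $\set{A \cap B,\ A^c \cap B^c}$ and $\set{A \cap B^c,\ A^c \cap B}$. Since $(C,B)$ and $(C,B^c)$ have the same set of corners, $C$ is not nested with $B$ iff $C$ is not nested with $B^c$; hence, replacing $B$ by $B^c$ if necessary, I may assume the opposite pair in the hypothesis is $\set{A \cap B,\ A^c \cap B^c}$. Thus $C$ is not nested with $A \cap B$ and with $A^c \cap B^c$; in particular $C^i \cap A \cap B \ne \0$ and $C^i \cap A^c \cap B^c \ne \0$ for each $i \in \set{-1,1}$.

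Now, for each $i \in \set{-1,1}$, from $C^i \cap A \cap B \ne \0$ I read off $C^i \cap A \ne \0$ and $C^i \cap B \ne \0$, and from $C^i \cap A^c \cap B^c \ne \0$ I read off $C^i \cap A^c \ne \0$ and $C^i \cap B^c \ne \0$. Letting $i$ range over $\set{-1,1}$, this says exactly that all four corners of $(C,A)$ are nonempty and all four corners of $(C,B)$ are nonempty, i.e.\ $C$ is not nested with either of $A,B$. I do not expect any genuine obstacle here: the argument is a short bookkeeping with corners, the only points to keep in mind being the reduction to a single opposite pair via the $B \leftrightarrow B^c$ symmetry and the fact that ``nested'' asks merely for one empty corner (unlike $\perp$, there is no extra side condition to verify). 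As a sanity check one notes that the hypothesis also forces $A \cap B$ and $A^c \cap B^c$ to be nonempty, since a set is vacuously nested with $\0$, though this is not needed for the argument.
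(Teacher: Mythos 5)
Your proof is correct and follows essentially the same route as the paper's: reduce to the pair $\set{A \cap B,\ A^c \cap B^c}$ using the fact that nestedness is unaffected by complementation, then note that nonemptiness of $C^i \cap A \cap B$ and $C^i \cap A^c \cap B^c$ for each $i$ forces all corners of $(C,A)$ and $(C,B)$ to be nonempty. No gaps; your write-up just spells out the bookkeeping slightly more explicitly than the paper does.
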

\begin{proof}
	Because nestedness is immune to taking complements, we may assume that $C$ is not nested with $A \cap B$ and $A^c \cap B^c$. In particular, the corners $C^i \cap A$ and $C^i \cap A^c$ are nonempty for each $i \in \set{-1,1}$, so $C$ is not nested with $A$. Similarly, $C$ is not nested with $B$.
\end{proof}

\begin{lemma}\label{opposite_corners:lower_degree}
	Let $A,B \in \CC_k$. If $A \cap B$ and $A^c \cap B^c$ are also in $\CC_k$, then
	\[
	\nd(A \cap B) + \nd(A^c \cap B^c) \le \nd(A) + \nd(B),
	\]
	and the inequality is strict if $A$ and $B$ are not nested; in fact,
	\[
	\nd(A \cap B) + \nd(A^c \cap B^c) \le \nd(A) + \nd(B) - 2.
	\]		
\end{lemma}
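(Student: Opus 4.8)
The plan is to establish an injection between the $k$-neighborhoods of the two "diagonal" corners $A\cap B$ and $A^c\cap B^c$ and the union of the $k$-neighborhoods $N_k(A)\cup N_k(B)$, losing at least two elements in the non-nested case. First, I would observe that $N_k(A\cap B)$ and $N_k(A^c\cap B^c)$ are disjoint: if a single cut $C\in\CC_k$ were not nested with both $A\cap B$ and $A^c\cap B^c$, then \cref{opposite_corners:nestedness} would force $C$ to be not nested with $A$ (and with $B$), but more to the point I want to route these sets into $N_k(A)\cup N_k(B)$ in a way that is two-to-one on an overlap or simply misses two specific witnesses. The clean route is: by \cref{N(A_cap_B)_subset_N(A)_cup_N(B)} (applied with the hypothesis $A^c\cap B^c\ne\0$, which holds since $A^c\cap B^c\in\CC_k$ is in particular nonempty), every $C\in N_k(A\cap B)$ lies in $N_k(A)\cup N_k(B)$; and symmetrically (applying the same lemma to $A^c,B^c$, using $A\cap B\ne\0$), every $C\in N_k(A^c\cap B^c)$ lies in $N_k(A^c)\cup N_k(B^c) = N_k(A)\cup N_k(B)$, since non-nestedness is invariant under complementation. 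So both neighborhoods inject into $N_k(A)\cup N_k(B)$.

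The inequality $\nd(A\cap B)+\nd(A^c\cap B^c)\le \nd(A)+\nd(B)$ would then follow once I show $N_k(A\cap B)\cap N_k(A^c\cap B^c)=\0$, because then the left side counts the size of a disjoint union sitting inside $N_k(A)\cup N_k(B)$, whose size is at most $\nd(A)+\nd(B)$. The disjointness is exactly \cref{opposite_corners:nestedness} read contrapositively in a mild form: a cut not nested with both $A\cap B$ and its opposite corner $A^c\cap B^c$ — wait, that lemma's conclusion is about $A$ and $B$, not a contradiction; the actual disjointness I need is that no $C\in\CC_k$ can fail nestedness with both of a pair of *opposite* corners while still being a genuine cut. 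I would instead argue directly: if $C$ is not nested with $A\cap B$ and not nested with $A^c\cap B^c$, then all four corners $C^i\cap(A\cap B)$ and $C^i\cap(A^c\cap B^c)$ are nonempty, which forces (as in the proof of \cref{opposite_corners:nestedness}) $C$ not nested with $A$ and with $B$; this doesn't yet give disjointness, so the real mechanism must be that one of $A\cap B$, $A^c\cap B^c$ is *not* a cut unless... — here I need the hypothesis that all four sets $A,B,A\cap B,A^c\cap B^c$ are in $\CC_k$, and I expect the argument to use that $\cobd(A\cap B)$ and $\cobd(A^c\cap B^c)$ partition (a subset of) $\cobd A\sqcup\cobd B$, so each edge of $\cobd(A\cap B)$ "comes from" $\cobd A$ or $\cobd B$; tracking which neighborhood witnesses go where is what yields the count.

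For the strict inequality with a drop of $2$ when $A,B$ are not nested: the point is that $A\in N_k(B)$ and $B\in N_k(A)$ (since $A,B\in\CC_k$ are not nested, and $A\ne B^c$ as both are cuts so $A\perp$-considerations apply), so $A$ and $B$ are two elements of $N_k(A)\cup N_k(B)$ — more precisely $B\in N_k(A)$ and $A\in N_k(B)$, and these are genuinely "extra". I would show that neither $A$ nor $B$ lies in $N_k(A\cap B)\cup N_k(A^c\cap B^c)$: indeed $A$ is nested with $A\cap B$ (one corner, $A^c\cap(A\cap B)$, is empty) and $A$ is nested with $A^c\cap B^c$ (the corner $A\cap(A^c\cap B^c)$ is empty), and likewise for $B$. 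Hence $N_k(A\cap B)\sqcup N_k(A^c\cap B^c)$ injects into $\big(N_k(A)\cup N_k(B)\big)\setminus\{A,B\}$, giving the $-2$. The main obstacle I anticipate is bookkeeping the first (non-strict) inequality cleanly — specifically, proving $N_k(A\cap B)\cap N_k(A^c\cap B^c)=\0$ rigorously, since the naive application of the lemmas only gives containment, not disjointness, and one must genuinely exploit that a cut $C$ not nested with both opposite corners would, via the coboundary partition and the minimality of $k$, produce a forbidden configuration.
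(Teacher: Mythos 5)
Your proposal hinges on a claim that is neither true in general nor needed: that $N_k(A\cap B)$ and $N_k(A^c\cap B^c)$ are disjoint. You flag this yourself as the ``main obstacle'' and gesture at a coboundary-partition argument to patch it, but that is not where the argument lives. In fact \cref{opposite_corners:nestedness} tells you what happens when a cut $C$ lies in \emph{both} $N_k(A\cap B)$ and $N_k(A^c\cap B^c)$: such a $C$ must lie in \emph{both} $N_k(A)$ and $N_k(B)$. So the overlap on the left is matched by the overlap on the right, and no disjointness is required.

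The paper's proof is a per-element contribution count rather than a set-level injection. For each $C \in \CC_k$, compare the number of sets among $\set{A\cap B,\ A^c\cap B^c}$ that $C$ is not nested with (its contribution to the left-hand side, an integer in $\set{0,1,2}$) against the analogous count for $\set{A,B}$. If $C$ contributes $1$ on the left, \cref{N(A_cap_B)_subset_N(A)_cup_N(B)} (applied either directly, using $A^c\cap B^c\ne\0$, or to $A^c,B^c$, using $A\cap B\ne\0$) gives that $C$ contributes at least $1$ on the right. If $C$ contributes $2$ on the left, \cref{opposite_corners:nestedness} gives that $C$ contributes exactly $2$ on the right. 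Summing over $C$ yields the non-strict inequality with no disjointness assumption. Your handling of the $-2$ correction is correct and matches the paper: when $A,B$ are not nested, $A$ and $B$ each contribute $1$ to the right (being in $N_k(B)$ and $N_k(A)$ respectively) and $0$ to the left (each is nested with every corner of $(A,B)$). So the right ingredients are in your draft --- the two auxiliary lemmas and the strict-case witnesses --- but the set-theoretic injection-plus-disjointness framing should be replaced by this element-wise double count, and the speculative coboundary-partition machinery dropped entirely.
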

\begin{proof}
	\cref{N(A_cap_B)_subset_N(A)_cup_N(B)} implies that any cut in $\CC_k$ that contributes exactly $1$ to the left-hand side, contributes at least $1$ to the right-hand side. Furthermore, by \cref{opposite_corners:nestedness}, any cut $C \in \CC_k$ that contributes $2$ to the left-hand side, also contributes $2$ to right-hand side, so the first inequality follows.
	
	If $A$ and $B$ are not nested, then $A \in N_k(B)$ and $B \in N_k(A)$, so they each contribute $1$ to the right-hand side. On the other hand, both $A$ and $B$ are nested with any of their corners, so $A,B$ contribute nothing to the left-hand side.
\end{proof}

Now we show that when $k = k_0$, the hypothesis of \cref{opposite_corners:lower_degree} is met, modulo taking complements.

\begin{lemma}\label{opposite_corners:thin_cuts}
	For any two thin $A,B$ and any infinite corner $C$ of $(A,B)$, if the opposite corner $-C$ is also infinite, then both $C$ and $-C$ are thin cuts.
\end{lemma}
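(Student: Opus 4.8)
The plan is to reduce to the case $C = A \cap B$ and $-C = A^c \cap B^c$, then show that both of these sets are cuts, and finally pin down the sizes of their coboundaries using a submodularity-type corner count together with the minimality of $k_0$.

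\emph{Reduction.} Since $\cobd(B^c) = \cobd B$ while $B^c$ and $(B^c)^c = B$ are infinite, $B^c$ is also thin, and the four corners of $(A,B^c)$ are the same four sets as the corners of $(A,B)$, with the two opposite pairs unchanged. So, replacing $B$ by $B^c$ if necessary, I may assume $C = A \cap B$ and $-C = A^c \cap B^c$.

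\emph{$C$ and $-C$ are cuts.} They are infinite by hypothesis, and their complements are infinite too, since $C^c = A^c \cup B^c \supseteq A^c \cap B^c = -C$ and $(-C)^c = A \cup B \supseteq A \cap B = C$. For the coboundary, every edge incident to both $A \cap B$ and its complement is incident to both $A$ and $A^c$ or to both $B$ and $B^c$, i.e. $\cobd(A \cap B) \subseteq \cobd A \cup \cobd B$; likewise $\cobd(A^c \cap B^c) \subseteq \cobd A \cup \cobd B$. As $A,B$ are cuts these are finite, so $C$ and $-C$ are cuts; in particular $|\cobd C| \ge k_0$ and $|\cobd(-C)| \ge k_0$.

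\emph{The key inequality.} I claim
\[
|\cobd(A \cap B)| + |\cobd(A^c \cap B^c)| \le |\cobd A| + |\cobd B|,
\]
which I would prove edge by edge: for a fixed edge $e$, let $\alpha(e), \beta(e), \gamma(e), \delta(e) \in \set{0,1}$ record whether $e$ lies in $\cobd A$, $\cobd B$, $\cobd(A\cap B)$, $\cobd(A^c \cap B^c)$ respectively, and verify $\gamma(e) + \delta(e) \le \alpha(e) + \beta(e)$; summing over all edges then gives the inequality. If $\gamma(e)+\delta(e) = 0$ this is trivial. If it equals $1$, say $\gamma(e) = 1$, then $e$ has exactly one endpoint in $A \cap B$; the other endpoint lies in $A^c$ or in $B^c$, so $\alpha(e) = 1$ or $\beta(e) = 1$. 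If it equals $2$, then one endpoint of $e$ lies in $A \cap B$ and the other, being outside $A \cap B$ and also being the unique endpoint outside $A^c \cap B^c$, must itself lie in $A^c \cap B^c$, whence $\alpha(e) = \beta(e) = 1$. (One can check the difference equals twice the number of edges between $A \cap B^c$ and $A^c \cap B$, but only the inequality is needed.) This case check is the one place where a little care is warranted; everything else is routine.

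\emph{Conclusion.} By thinness of $A$ and $B$, the right-hand side is $2k_0$, so $|\cobd C| + |\cobd(-C)| \le 2k_0$; combined with $|\cobd C| \ge k_0$ and $|\cobd(-C)| \ge k_0$, this forces $|\cobd C| = |\cobd(-C)| = k_0$, so $C$ and $-C$ are thin. Note that the minimality of $k_0$ is used precisely in this final squeeze — the rest of the argument only uses that $A$ and $B$ are cuts.
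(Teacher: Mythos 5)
Your proof is correct and rests on the same key mechanism as the paper's: reduce to $C = A \cap B$, $-C = A^c \cap B^c$ by complementing $B$, establish the submodularity inequality $\lvert\cobd(A\cap B)\rvert + \lvert\cobd(A^c\cap B^c)\rvert \le \lvert\cobd A\rvert + \lvert\cobd B\rvert$, and combine it with minimality of $k_0$. Where you differ is in execution: you prove the submodularity directly by an edge-by-edge indicator count (correctly noting the deficit is $2\lvert\cobd(A\cap B^c, A^c\cap B)\rvert$) and then close with a clean squeeze $k_0 + k_0 \le \lvert\cobd C\rvert + \lvert\cobd(-C)\rvert \le 2k_0$, whereas the paper introduces six explicit corner-boundary counts $e_A, e_B, e_{A^c}, e_{B^c}, d_{AB}, d_{AB^c}$, derives $d_{AB^c} = 0$ plus the equalities $e_A = e_{B^c}$ and $e_{A^c} = e_B$, and then reassembles $\lvert\cobd(A\cap B)\rvert = \lvert\cobd A\rvert$. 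Your version is shorter and isolates the submodularity statement cleanly; the paper's bookkeeping yields strictly more structural information about the boundary edges (such as $d_{AB^c}=0$) as a by-product, but that extra information is not used for this lemma. One small thing worth making explicit, which you handle implicitly in the reduction step: taking complements preserves thinness because $\cobd(B^c) = \cobd B$ and the corners of $(A,B^c)$ are the same four sets with opposite pairs preserved — you do say this, and it is exactly the reduction the paper also uses.
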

\begin{proof}
	By taking complements if necessary, we may assume that $C = A \cap B$ and we suppose that $-C = A^c \cap B^c$ is infinite and hence a cut. Putting
	\begin{align*}
		e_B &\defeq |\cobd (A \cap B, A^c \cap B)| \\
		e_A &\defeq |\cobd (A \cap B, A \cap B^c)| \\
		e_{B^c} &\defeq |\cobd (A \cap B^c, A^c \cap B^c)| \\
		e_{A^c} &\defeq |\cobd (A^c \cap B, A^c \cap B^c)| \\
		d_{A B} &\defeq |\cobd (A \cap B, A^c \cap B^c)| \\
		d_{A B^c} &\defeq |\cobd (A \cap B^c, A^c \cap B)|,
	\end{align*}
	observe that 
	\begin{equation}\label{eq:bdries_of_A_and_B}
		e_B + e_{B^c} + d_{A B} + d_{A B^c} = |\cobd A| = |\cobd B| = e_A + e_{A^c} + d_{A B} + d_{A B^c}.
	\end{equation}
	Also, by the minimality of $|\cobd A|$,
	\begin{equation}\label{eq:corner_bdry}
			\begin{aligned}
			e_B + e_A + d_{A B} &= |\cobd (A \cap B)| \ge e_B + e_{B^c} + d_{A B} + d_{A B^c} 
			\\
			e_{B^c} + e_{A^c} + d_{A B} &= |\cobd (A^c \cap B^c)| \ge e_B + e_{B^c} + d_{A B} + d_{A B^c},
		\end{aligned}
	\end{equation}
	so $e_A \ge e_{B^c} + d_{A B^c}$ and $e_{A^c} \ge e_B + d_{A B^c}$. Plugging this into the right-hand side of \labelcref{eq:bdries_of_A_and_B} gives
	\begin{align*}
		e_B + e_{B^c} + d_{A B} + d_{A B^c} 
		&=
		e_A + e_{A^c} + d_{A B} + d_{A B^c}
		\\
		&\ge 
		e_{B^c} + d_{A B^c} + e_B + d_{A B^c} + d_{A B} + d_{A B^c} 
		\\
		&= 
		e_{B^c} + e_B + d_{A B} + 3 d_{A B^c},
	\end{align*}
	so $d_{A B^c} = 0$ and the inequality has to be equality, which, in the light of $e_A \ge e_{B^c}$ and $e_{A^c} \ge e_B$, implies $e_A = e_{B^c}$ and $e_{A^c} = e_B$. But now \labelcref{eq:bdries_of_A_and_B,eq:corner_bdry} together give $|\cobd (A \cap B)| = |\cobd A|$ and $|\cobd (A^c \cap B^c)| = |\cobd B|$, so $A \cap B$ and $A^c \cap B^c$ are also thin.
\end{proof}

Recalling that we denote by $\CC_k' \subseteq \CC_k$ the set of cuts of minimum $\NNGk$-degree, we take $\Cthin' \defeq \CC_{k_0}'$. Clearly, $\Cthin'$ is invariant, so it remains to show the following.

\begin{prop}\label{C''_is_nested}
	$\Cthin'$ is nested (i.e. $\NNG$-independent).
\end{prop}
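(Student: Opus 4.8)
The plan is a proof by contradiction that feeds the two preceding lemmas. Suppose $A, B \in \Cthin'$ are not nested. Since $\Cthin' = \CC_{k_0}'$ consists of the cuts of $\CC_{k_0}$ of minimum $k_0$-degree (and $\CC_{k_0} = \Cthin$ by \cref{cuts_connectedness}), we have $\nd[k_0](A) = \nd[k_0](B) = d_0$, where $d_0 \defeq \min_{C \in \CC_{k_0}} \nd[k_0](C)$. The aim is to produce a thin cut of $k_0$-degree strictly below $d_0$ — an absurdity.

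The first step is to locate a pair of opposite corners of $(A,B)$ that are both infinite. Non-nestedness of $A, B$ means all four corners are nonempty. Each of $A, A^c, B, B^c$ is infinite (as $A, B$ are cuts) and is the disjoint union of the two corners it contains. If neither of the two opposite-pairs of corners were both infinite, then picking a finite corner from each pair would yield two distinct finite corners; but any two corners taken from different opposite-pairs together exhaust one of $A, A^c, B, B^c$, forcing that set to be finite — a contradiction. Hence, after replacing $B$ by $B^c$ if necessary (legitimate, since $\CC_{k_0}$ and the $k_0$-degree are complement-invariant, so $B^c \in \Cthin'$, and nestedness is complement-invariant, so $A, B$ are still not nested), we may assume $A \cap B$ and $A^c \cap B^c$ are both infinite.

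With this in place, \cref{opposite_corners:thin_cuts}, applied to the thin cuts $A, B$ and the corner $C \defeq A \cap B$ whose opposite $-C = A^c \cap B^c$ is infinite, tells us that $A \cap B$ and $A^c \cap B^c$ are themselves thin cuts, so they lie in $\CC_{k_0}$. Then \cref{opposite_corners:lower_degree}, applied to $A, B \in \CC_{k_0}$ with $A \cap B, A^c \cap B^c \in \CC_{k_0}$ and using that $A, B$ are not nested, gives
\[
\nd[k_0](A \cap B) + \nd[k_0](A^c \cap B^c) \le \nd[k_0](A) + \nd[k_0](B) - 2 = 2 d_0 - 2,
\]
so at least one of the thin cuts $A \cap B$, $A^c \cap B^c$ has $k_0$-degree at most $d_0 - 1$, contradicting the minimality of $d_0$ and finishing the proof.

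I expect essentially all the difficulty to have been front-loaded into \cref{opposite_corners:thin_cuts,opposite_corners:lower_degree} (and the local finiteness of the non-nestedness graph, which is what makes the $k_0$-degree a finite number worth minimizing in the first place). The only genuinely new ingredient is the short combinatorial observation in the second paragraph — that two non-nested cuts always have an infinite opposite pair of corners — together with the minor normalization allowing us to take that pair to be $A \cap B$ and $A^c \cap B^c$; that bookkeeping is the sole, and quite mild, obstacle.
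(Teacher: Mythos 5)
Your proof is correct and follows essentially the same route as the paper's: pigeonhole to get an infinite pair of opposite corners, normalize by complementation to make them $A \cap B$ and $A^c \cap B^c$, promote them to thin cuts via \cref{opposite_corners:thin_cuts}, and then contradict minimality of the $k_0$-degree via the strict inequality of \cref{opposite_corners:lower_degree}. You only spell out more explicitly the pigeonhole step and the complement-invariance of $\CC_{k_0}$ and of the $k_0$-degree (and you cite the degree-inequality lemma where the paper's text has a cross-reference slip), so there is nothing to add.
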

\begin{proof}
	The invariance is clear. For nestedness, fix $A,B \in \Cthin'$. Because the sets $A,A^c,B,B^c$ are infinite, the Pigeonhole Principle implies that there is a pair of oppositve corners $C$ and $-C$ of $(A,B)$ that are both infinite. By \cref{opposite_corners:thin_cuts}, both $C$ and $-C$ are thin cuts, and by replacing one or both of $A,B$ with their complements, we may assume without loss of generality that $C = A \cap B$ and $-C = A^c \cap B^c$. Thus, \cref{opposite_corners:nestedness} applies, so if $A$ and $B$ were not nested, then the inequality in \cref{opposite_corners:nestedness} would be strict, contradicting the minimality of $\nd(A)$ and $\nd(B)$.
\end{proof}

\subsection{Stallings' theorem on ends of groups}

\begin{lemma}\label{stabilizers_of_finite_sets}
	Let $\Ga \actson^\al X$ be an action of a group $\Ga$ on a set $X$ with all point-stabilizers in the same commensurability class $\CF$ (e.g. $\CF \defeq \text{finite subgroups of } \Ga$). Then the set-stabilizers of finite subsets of $X$ also belong to $\CF$.
\end{lemma}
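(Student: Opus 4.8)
The plan is to reduce the setwise stabilizer of a finite set to its \emph{pointwise} stabilizer --- which is a finite intersection of point-stabilizers --- and then invoke two soft facts about commensurability: that a finite intersection of pairwise-commensurable subgroups stays in the common commensurability class, and that a finite overgroup of a subgroup in $\CF$ is again in $\CF$.

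More precisely, fix a nonempty finite $S = \set{x_1, \dots, x_n} \subseteq X$ (this is the case of interest; for $S = \0$ one has $\Stab_\al(\0) = \Ga$) and write $H \defeq \Stab_\al(S)$ for its setwise stabilizer and $K \defeq \bigcap_{i=1}^n \Stab_\al(x_i)$ for its pointwise stabilizer, noting $K \le H$. Since each $\ga \in H$ permutes the finite set $S$, there is a homomorphism $H \to \mathrm{Sym}(S)$ whose kernel is exactly $K$, whence $[H : K] \le n!$ is finite. So it suffices to establish (i) $K \in \CF$, and (ii) whenever $K \in \CF$ and $K \le H$ with $[H:K] < \infty$, also $H \in \CF$.

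For (i), I would recall that commensurability is an equivalence relation on subgroups of $\Ga$, and that $A,B$ being commensurable means $A \cap B$ has finite index in each of $A,B$; then induct on $n$. The case $n=1$ is the hypothesis. For the step, put $B \defeq \bigcap_{i=1}^{n-1} \Stab_\al(x_i) \in \CF$; since $B$ and $\Stab_\al(x_n)$ both lie in the single commensurability class $\CF$, they are commensurable, so $B \cap \Stab_\al(x_n) = K$ has finite index in both $B$ and $\Stab_\al(x_n)$, hence is commensurable with $B$ and thus lies in $\CF$. For (ii), since $K \le H$ and $[H:K] < \infty$, the subgroup $H \cap K = K$ has finite index in both $H$ and $K$, so $H$ is commensurable with $K \in \CF$, i.e. $H \in \CF$.

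I do not expect a genuine obstacle: the argument is essentially bookkeeping with indices. The only points requiring a line of care are the transitivity of commensurability (a short index computation, $[A : A \cap C] \le [A : A \cap B \cap C] = [A : A \cap B]\,[A \cap B : A \cap B \cap C] \le [A : A \cap B]\,[B : B \cap C]$, and symmetrically for $C$) and the implicit restriction to nonempty $S$ indicated above.
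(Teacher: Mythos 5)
Your proposal is correct and follows essentially the same route as the paper: pass to the homomorphism $H \to \Sigma(S)$ whose kernel is the pointwise stabilizer, note the finite index, and observe that the kernel, being a finite intersection of point-stabilizers, lies in $\CF$. You simply spell out the commensurability bookkeeping (transitivity and the induction on $n$) that the paper leaves implicit.
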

\begin{proof}
	Let $F \subseteq X$ be finite and let $H \defeq \Stab_\al(F)$. The action of $H$ on $F$ gives a homomorphism $\phi : H \to \Si(F)$. Then $[H : \ker(\phi)] < \w$, so $H$ is in the same commensurability class as $\ker(\phi)$. But $\ker(\phi) = \bigcap_{x \in F} \Stab_\al(x)$, so $\ker(\phi) \in \CF$ because $F$ is finite.
\end{proof}

\begin{cor}\label{stabilizers_of_cuts}
	Let $\Ga \actson^\al G$ be an action of a group $\Ga$ on a graph $G$ with finite vertex-stabilizers. Then the set-stabilizers of cuts are also finite.
\end{cor}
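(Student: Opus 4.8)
The plan is to reduce the statement to \cref{stabilizers_of_finite_sets} by associating to each cut a canonical \emph{finite} set of vertices that every element stabilizing the cut must also stabilize. Let $A$ be a cut of $G$, so that by \cref{defn:cut} the coboundary $\cobd_G A$ is a finite set of edges, and it is nonempty since $G$ is connected and both $A, A^c$ are infinite (hence nonempty), forcing at least one edge between $A$ and $A^c$. First I would observe that any $\ga \in \Stab_\al(A)$ also fixes $A^c$ setwise and, being a graph automorphism of $G$, carries $\cobd_G A = \cobd_G(A, A^c)$ onto $\cobd_G(\ga\cdot_\al A, \ga\cdot_\al A^c) = \cobd_G(A, A^c) = \cobd_G A$; thus $\Stab_\al(A)$ stabilizes the finite edge set $\cobd_G A$ setwise. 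Consequently it stabilizes setwise the finite, nonempty vertex set $V_0 \subseteq X$ consisting of all endpoints of edges in $\cobd_G A$ (note $1 \le |V_0| \le 2\,|\cobd_G A| < \w$), i.e. $\Stab_\al(A) \le \Stab_\al(V_0)$.

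Second, I would invoke \cref{stabilizers_of_finite_sets} applied to the action $\Ga \actson^\al X$ on vertices, with $\CF$ the commensurability class of finite subgroups of $\Ga$: by hypothesis all point-stabilizers of this action are finite, and all finite subgroups of $\Ga$ lie in a single commensurability class (each is commensurable with the trivial subgroup), so the hypothesis of the lemma is met. The lemma then yields that the set-stabilizer $\Stab_\al(V_0)$ of the finite set $V_0$ is finite. Since $\Stab_\al(A)$ is a subgroup of $\Stab_\al(V_0)$, it is finite as well, which is the desired conclusion.

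I do not anticipate a genuine obstacle here: the argument is essentially a bookkeeping reduction. The only points worth checking carefully are that the $\Ga$-action on vertices does induce a $\Ga$-action on edges compatible with $\cobd_G$ (immediate, since $G$ is an irreflexive symmetric subset of $X^2$ preserved by $\Ga$), and that "finite subgroups of $\Ga$" really does constitute one commensurability class, so that \cref{stabilizers_of_finite_sets} applies verbatim with this choice of $\CF$; the nonemptiness of $V_0$ is what makes the lemma's conclusion nonvacuous.
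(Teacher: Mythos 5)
Your proof is correct and takes essentially the same route as the paper: the paper observes $\ga(\bd A) = \bd(\ga A)$, so $\Stab(A) \subseteq \Stab(\bd A)$ with $\bd A$ finite, and then applies \cref{stabilizers_of_finite_sets} with $\CF$ the commensurability class of finite subgroups. Your only deviation is using the set of all endpoints of $\cobd A$ instead of the vertex-boundary $\bd A$, which is an immaterial variant of the same finite invariant set.
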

\begin{proof}
	For any $\ga \in \Ga$ and a cut $A$ of $G$, $\ga A$ is also a cut and $\ga (\bd A) = \bd (\ga A)$, so $\Stab(A) \subseteq \Stab(\bd A)$ and \cref{stabilizers_of_finite_sets} gives the conclusion.
\end{proof}

We are now ready to prove \cref{intro:action_on_multiended_graph==>action_on_tree}, which we state again here for the reader's convenience.

\begin{namedthm*}{\cref{intro:action_on_multiended_graph==>action_on_tree}}
	If a group $\Ga$ admits a transitive action on a connected multi-ended graph $G$ with all vertex-stabilizers being finite, then it also admits an action on a tree with finite edge-stabilizers and without fixed points.
\end{namedthm*}

\begin{proof}
	By \cref{C''_is_nested}, we can apply \cref{action_on_C_gives_action_on_T} the collection $\Cthin'$ as above and get an action on the tree $\TC_{\Cthin'}$. The edges of this tree are exactly the cuts in $\Cthin'$, so their stabilizers are exactly the set-stabilizers of cuts and \cref{stabilizers_of_cuts} concludes the proof. As for fixed points, let $[A]_{\Cthin'}$ be a vertex of $\TC_{\Cthin'}$, so $A$ is infinite but $\del A$ is finite. Take $x \in \del A$ and $y \in A \setminus \del A$ and let $\ga \in \Ga$ be such that $\ga \cdot x = y$. Then $\del (\ga \cdot A) \ne \del A$, so $\ga \cdot A \ne A$, and also, $\ga \cdot A \cap A \ni y$, so the hypothesis of the ``moreover'' part of \cref{action_on_C_gives_action_on_T} is satisfied, and thus, the action $\Ga \actson \TC_{\Cthin'}$ has no fixed points.
\end{proof}

Finally, combined with Bass--Serre theory \cite{Serre:Trees}, the last theorem gives the following famous corollary, which states the nontrivial implication of \cref{intro:general_Stallings}.

\begin{cor}[Stallings theorem]
	If a group $\Ga$ admits a transitive action $\Ga \actson^\al$ on a multi-ended graph $G$ with all vertex-stabilizers being finite and without fixed points, then $\Ga$ splits over a finite subgroup $\De$.
\end{cor}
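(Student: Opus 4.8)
The plan is to deduce this corollary from \cref{intro:action_on_multiended_graph==>action_on_tree} together with the fundamental theorem of Bass--Serre theory, via the classical dictionary between tree actions and splittings. Feeding the hypotheses of the corollary into \cref{intro:action_on_multiended_graph==>action_on_tree} yields an action $\Ga \actson T$ on a tree with all edge-stabilizers finite and with no global fixed vertex. (The ``without fixed points'' clause on $\al$ is in fact automatic, since a transitive action on a graph with more than one vertex cannot fix a vertex; what we actually use is the no-fixed-point property of the \emph{tree} action, supplied by \cref{intro:action_on_multiended_graph==>action_on_tree}, and the fact that $\Ga$ is infinite, being transitive on an infinite set.)

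Before invoking Bass--Serre theory I would perform two harmless normalizations. First, pass to a minimal $\Ga$-invariant subtree $T_0 \subseteq T$, which exists because $\Ga$ is finitely generated and fixes no vertex of $T$; its edge-stabilizers are among those of $T$, hence still finite. Second, replace $T_0$ by its barycentric subdivision so that $\Ga$ acts \emph{without inversion}; the new edge-stabilizers are intersections of finite stabilizers of $T_0$, hence finite, and the action remains fixed-point-free, since a vertex of the subdivision fixed by all of $\Ga$ would be either an old $\Ga$-fixed vertex (none exist) or the midpoint of an old edge setwise fixed by $\Ga$, which would force $\Ga$ to equal a finite edge-stabilizer, contradicting that $\Ga$ is infinite. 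Now the fundamental theorem of Bass--Serre theory \cite{Serre:Trees} realizes $\Ga$ as the fundamental group of the graph of groups $T_0/\Ga$, with vertex groups the vertex-stabilizers and edge groups the finite edge-stabilizers; since $\Ga$ fixes no vertex, this graph of groups has at least one edge. Splitting $\Ga$ off along such an edge $e$ — as an HNN extension $\Ga = \ast_\De K$ when $e$ is non-separating (e.g. a loop of $T_0/\Ga$), and as an amalgam $\Ga = K \ast_\De \La$ when $e$ is a bridge — with $\De$ the finite stabilizer of a lift of $e$, produces a splitting of $\Ga$ over a finite subgroup.

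The one point requiring genuine care, and which I expect to be the main (and essentially only non-formal) obstacle, is verifying that the resulting splitting is \emph{non-trivial}: in the amalgam case one must check that $\De$ is a proper subgroup of both $K$ and $\La$ (the HNN case carries no such condition). This is precisely where passing to a minimal subtree is used: a minimal $\Ga$-tree gives a reduced graph of groups, so none of the amalgamation/HNN operations building up $\pi_1(T_0/\Ga)$ is degenerate; equivalently, a trivial splitting would force $\Ga$ to coincide with a vertex group and hence to fix that vertex of $T_0$, contradicting minimality. I would therefore state the minimal-subtree reduction explicitly before reading off the edge, after which the conclusion is immediate from the definition of ``$\Ga$ splits over $\De$'' as an amalgamated product or HNN extension over $\De$.
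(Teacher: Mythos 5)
Your proposal follows the same route as the paper: apply \cref{intro:action_on_multiended_graph==>action_on_tree} to obtain a fixed-point-free tree action with finite edge-stabilizers, subdivide to remove inversions, invoke the Bass--Serre fundamental theorem to realize $\Ga$ as the fundamental group of the quotient graph of groups, and read off a splitting by deleting an edge. Your observation that the ``without fixed points'' clause on $\al$ is automatic is also correct, and the subdivision step and HNN/amalgam dichotomy are exactly as in the paper.

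Where you genuinely diverge is in adding the minimal-subtree reduction and the explicit non-triviality check, neither of which appears in the paper's proof. This extra care is actually well-placed. As the paper's argument is written, it takes an \emph{arbitrary} edge $e$ of the graph of groups $\YC$ and removes it; if $\YC$ is not reduced (some edge group equals an incident vertex group), that particular edge could yield a degenerate amalgam $K \ast_\De \La$ with $\De = K$. The no-fixed-point hypothesis rules out $\YC$ collapsing to a single vertex (i.e., $\Ga$ being a vertex group), but it does not by itself prevent a poor choice of $e$ from producing a trivial splitting. Passing to a minimal $\Ga$-invariant subtree, as you do, forces the quotient graph of groups to be reduced, after which \emph{every} edge gives a proper splitting. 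Your approach thus fills a small gap that the paper glosses over while otherwise matching it step for step; the trade-off is a slightly longer argument, since you also have to observe that subdivision preserves minimality and that the stabilizer of a midpoint (an index-at-most-two extension of a finite directed-edge stabilizer) is still finite and hence proper in the infinite group $\Ga$.
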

\begin{proof}
	By \cref{intro:action_on_multiended_graph==>action_on_tree}, $\Ga$ admits an action on a tree with finite edge-stabilizers and without fixed points. By adding midpoints to the edges, if necessary, we may assume that the action is without inversions in the sense of \cite{Serre:Trees}*{Section 3.1, first paragraph}. By the fundamental theorem of Bass--Serre theory \cite{Serre:Trees}*{Theorem 13}, $\Ga$ is the fundamental group of a connected graph of groups $\YC$, where the groups are the vertex and edge stabilizers of the action $\al$.
	
	Now take an (undirected) edge $e$ of $\YC$ and denote its stabilizer by $\De$. Removal of $e$ gives a new graph of groups $\ZC \defeq \YC - \set{e}$ and the associativity of the construction of the fundamental group gives the following: if $\ZC$ is still connected, then $\Ga = \ast_\De K$, where $K$ is the fundamental group of $\ZC$; otherwise, $G = K \ast_\De \La$, where $K,\La$ are the fundamental groups of the two connected components of $\ZC$.
\end{proof}

\section{A Stallings theorem for Borel equivalence relations}\label{sec:Borel-Stallings}

Throughout this section, let $X$ be a standard Borel space and let $E$ be a countable\footnote{This means that each $E$-class is countable.} Borel equivalence relation on $X$. Recall that a Borel graphing $G$ of $E$ is a Borel graph\footnote{That is: an irreflexive and symmetric Borel subset of $X^2$} on $X$, whose connectedness equivalence relation $E_G$ is exactly $E$. 

All graph theoretic notions we have considered so far, such as multi-ended, cuts, basis, etc., were defined for a connected graph $G$. Given an arbitrary graph on $X$, we extend all these notions to $G$ by restricting our attention to considering only $E_G$-related subsets of $X$, i.e. each set we consider will be contained in one $G$-connected component. More precisely, in the definitions of these notions, we replace 

\begin{itemize}
	\item $\Pow(X)$ with $\Pow_{E_G}(X)$---the collection of all nonempty $E_G$-related subsets of $X$,
	
	\item a complement of a set $A \in \Pow_{E_G}(X)$ with $A^* \defeq [A]_{E_G} \setminus A$.
\end{itemize}

For example, we call $G$ \emph{multi-ended} if each of its connected components has more than one end. Furthermore, call $C \in \Pow_{E_G}(X)$ a \emph{cut} of $G$ if $\bd C$ is finite and $C, C^*$ are both infinite. 

Next, we identify each cut $C$ of $G$ with 
\[
(\cobd C, \cobd C^*)_G \defeq \set{(x,y) \in X^2 : x \in C \text{ and } y \in C^*},
\] 
so the set $\CC_G$ is just a Borel subset of $[G]^{<\w}$, hence a standard Borel space. The equivalence relation $E_G$ naturally extends to that on $[G]^{<\w}$ and we denote this extension by $\tilde{E_G}$. Note that if $G$ is multi-ended, then $\CC_G$ is a complete $\tilde{E_G}$-section.

We call a cut $C \in \CC_G$ \emph{thin} if it is thin for its $G$-connected component. We also define the non-nestedness graph $\NC_G$ on $\CC_G$ by putting an edge between $A,B \in \Pow_G(X)$ if $A$ and $B$ are in the same $G$-connected component and are non-nested.

Letting
\begin{itemize}
	\item $\Cthin_G$ be the set of thin cuts of $G$,
	
	\item $\Cthin_G' \subseteq \Cthin_G$ be the set of those cuts $C$ with minimum nestedness degree in the graph $\NC$ restricted to the set of thin cuts in the $G$-connected component of $C$,
\end{itemize}
it easily follows by the Luzin--Novikov uniformization theorem, these subsets are Borel. 

Furthermore, given a Borel set $\CC \subseteq \CC_G$, the binary relation $\sim_\CC$ on $\CC$, as in \cref{defn:sim}, is also Borel, again by the Luzin--Novikov uniformization theorem. Recall that if $\CC$ is nested and chain-vanishing, then \cref{nested-and-chain-vanishing=>eq-rel} says that $\sim_\CC$ is an equivalence relation on $\CC$.

\cref{intro:Borel_Stallings} is now easily derived from the constructions described above and is our main application. We restate it more precisely here, recalling that a set $\CC \subseteq \CC_G$ is called \emph{$G$-complete} if it contains at least one cut from every connected component of $G$.

\begin{theorem}[Stallings for equivalence relations]\label{Borel_Stallings}
	Let $E$ be a countable Borel equivalence relation on a standard Borel space $X$ and let $G$ be a multi-ended Borel graphing of $E$. For any $G$-complete self-dual nested chain-vanishing Borel collection $\CC \subseteq \CC_G$ (e.g. $\CC \defeq \Cthin'_G$), there is a treeable equivalence relation $E_T$ and a Borel equivalence relation $E_\CC \le_B \; \sim_\CC$ such that $E = E_T \ast E_\CC$.
\end{theorem}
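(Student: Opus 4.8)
The plan is to assemble the Borel version of Stallings from the pieces already built in the paper. Starting from the tree $\TC_\CC$ constructed in Section~\ref{sec:construction_of_tree}, I would first observe that the map $A \mapsto [A]_\CC$ and the edge relation $\edgec{A}$ are Borel (by Luzin--Novikov, exactly as noted in the excerpt for $\sim_\CC$), so $\TC_\CC$ gives a Borel acyclic graph on the standard Borel space $\CC / \sim_\CC$ of $\sim_\CC$-classes, fiberwise over the $E$-classes. Since $\CC$ is $G$-complete, this Borel forest meets every $E$-class, and its connectedness equivalence relation --- call it $E_T$ on (a Borel transversal-free model of) $\CC / \sim_\CC$ --- is treeable by construction, being graphed by the acyclic Borel graph $\TC_\CC$. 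The content of Theorem~\ref{T_is_a_tree} is precisely that each fiber is a tree, so $E_T$ is the intended treeable factor.

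Next I would set up the free product decomposition in the sense of Gaboriau, \cite{Gaboriau:cost}*{Subsection IV-B}. The idea is that $E$ acts on its own "tree of $\sim_\CC$-classes": each point $x \in X$ should be assigned, in a Borel way, the $\sim_\CC$-class of some canonically chosen cut in its component, but since there is no canonical choice, one instead works with the full Borel bundle whose fiber over $x$ is the vertex set $\CC_{[x]_E} / \sim_\CC$ together with the tree structure, and realizes $E$ as acting on this bundle with treeable "orbit equivalence relation on vertices" $E_T$ and "edge-stabilizer" relation $E_\CC$. Concretely, I expect $E_\CC$ to arise as follows: fix a Borel way (again Luzin--Novikov) of listing, for each component, the cuts in $\CC$; then the relation "$x$ and $y$ lie in the same component and the distinguished enumerations of their local trees agree up to an automorphism fixing a chosen edge" reduces to $\sim_\CC$ via the map $\pi$ sending $x$ to the relevant $\sim_\CC$-class, giving $E_\CC \le_B\; \sim_\CC$. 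The equation $E = E_T \ast E_\CC$ should then be a formal consequence of Bass--Serre-style decomposition of the $E$-action on the fiberwise tree, transcribed into the Borel/measured-groupoid language of \cite{Gaboriau:cost}.

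I would organize the write-up as: (1) Borelness of $\TC_\CC$, $\sim_\CC$, and the vertex bundle; (2) identification of $E_T$ as the treeable connectedness relation of $\TC_\CC$ and verification via Theorem~\ref{T_is_a_tree} that its fibers are trees meeting every $E$-class; (3) construction of the Borel reduction $\pi$ witnessing $E_\CC \le_B\; \sim_\CC$; (4) verification that $E$ is generated by $E_T$ and $E_\CC$ with the freeness/amalgamation condition defining the free product. Step~(4) is where I would lean hardest on \cite{Gaboriau:cost}*{Subsection IV-B}, checking that the combinatorial group-action argument used for Theorem~\ref{intro:action_on_multiended_graph==>action_on_tree} and its Bass--Serre corollary is "parametrized" correctly so that no global choices are needed.

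The main obstacle I anticipate is step~(4): making the free-product claim $E = E_T \ast E_\CC$ precise and correct in the Borel category. Over a single group the decomposition is Bass--Serre theory applied to the action on the tree $\TC_{\Cthin'}$, but for equivalence relations one must produce the free product structure uniformly over all classes at once, which requires choosing, Borel-measurably, enough bookkeeping data (orientations of edges, fundamental domains of the fiberwise tree action, coset representatives) without ever selecting a point of $X$. The delicate point is that $E_\CC$ must literally be the "vertex-group-like" piece --- the relation recording the local $\sim_\CC$-classes --- and one must show that every $E$-step factors uniquely as an alternating product of an $E_T$-step and an $E_\CC$-step; this normal-form argument is exactly the Borel analogue of the normal form theorem for amalgams and HNN extensions, and verifying it cleanly is the real work. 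Everything else (Borelness, treeability of $E_T$, the reduction $E_\CC \le_B\; \sim_\CC$) I expect to be routine given the machinery already in place.
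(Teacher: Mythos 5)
There is a genuine gap: everything you label ``routine'' is indeed routine and matches the paper (Luzin--Novikov gives the Borel selection $\pi : X \to \CC$ of a cut in each component, and $E_\CC$ is simply the $\pi$-pullback of $\sim_\CC$, with no need for your auxiliary ``enumerations agreeing up to an automorphism fixing a chosen edge''), but the step you defer to the end --- establishing $E = E_T \ast E_\CC$ --- is the entire content of the proof, and the route you sketch for it does not work as stated. First, you place $E_T$ on ``the standard Borel space $\CC / \sim_\CC$''; but $\sim_\CC$ is a countable Borel equivalence relation that need not be smooth, so its quotient is not a standard Borel space in general, and in any case the theorem requires $E_T$ to be a subequivalence relation of $E$ on $X$ (in the paper it is generated by a graph pulled back to $X$ via $\pi$), not a relation on a bundle of cut-classes. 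The paper systematically avoids quotients by working with the lift $T_\CC$ on $\CC$ itself, whose edges are the pairs $(C,C^*)$, and which is acyclic because the projection to $\TC_\CC$ is injective on edges.

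Second, your plan to prove freeness by a Borel Bass--Serre normal-form argument misses the two devices that make the paper's proof short. The obstruction is that $\pi(X) = \CC_0$ selects only some cuts, so the canonical acyclic graph $T_\CC$ is not supported on $\CC_0$ and cannot be pulled back to $X$ directly; the paper resolves this with the edge-sliding argument (\cite{JKL}*{Proposition 3.3(i)} or \cite{Gaboriau:cost}) to produce an acyclic Borel graph $T_{\CC_0}$ on $\CC_0$ whose projection to $\CC_0 / \sim_\CC$ is a treeing of $\tilde{E} \rest{\CC_0} / \sim_\CC$. Then, with $T$ the $\pi$-pullback of $T_{\CC_0}$ and $E_T$ the relation it generates, the free product $E = E_T \ast E_\CC$ follows directly from Gaboriau's formulation: exhibiting a graphing whose projection modulo $E_\CC$ is a treeing of $E / E_\CC$ \emph{is} the verification of freeness in the sense of \cite{Gaboriau:cost}*{Subsection IV-B}, so no parametrized normal-form theorem for amalgams needs to be proved. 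Without the edge-sliding step and this characterization, your step (4) remains an unproved expectation rather than an argument, and it is precisely the part that cannot be waved through.
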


Because free product of treeable equivalence relations is treeable, this immediately gives:

\begin{cor}
	If a countable Borel equivalence relation $E$ admits a multi-ended Borel graphing $G$ such that $\sim_{\Cthin'_G}$ is treeable (e.g. finite, smooth, hyperfinite), then $E$ is treeable.
\end{cor}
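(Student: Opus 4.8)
The plan is to read this off \cref{Borel_Stallings}, so there is essentially nothing to do beyond bookkeeping. First I would check that $\CC \defeq \Cthin'_G$ is a legitimate instance of \cref{Borel_Stallings}: it is a Borel subset of $\CC_G$ and $G$-complete by the discussion preceding that theorem; it is self-dual because the set of thin cuts is closed under complements and the non-nestedness degree is complement-invariant (the corners of $(A,B)$ and of $(A,B^*)$ coincide, so $N_k(C) = N_k(C^*)$ for every cut $C$); it is nested by \cref{C''_is_nested}; and it is chain-vanishing by \cref{C_k_has_chain-vanishing}. In particular \cref{nested-and-chain-vanishing=>eq-rel} applies, so $\sim_{\Cthin'_G}$ is genuinely an equivalence relation and the hypothesis of the corollary is meaningful.

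Then I would apply \cref{Borel_Stallings} with this $\CC$ to obtain a treeable equivalence relation $E_T$ and a Borel equivalence relation $E_\CC$ with $E_\CC \le_B \sim_{\Cthin'_G}$ and $E = E_T \ast E_\CC$. Since $\sim_{\Cthin'_G}$ is treeable by hypothesis, and a countable Borel equivalence relation that is Borel reducible to a treeable one is itself treeable, $E_\CC$ is treeable. Finally, invoking the fact that a free product of treeable countable Borel equivalence relations is treeable (assemble acyclic Borel graphings of the factors along the free-product structure; cf.\ \cite{Gaboriau:cost}), we conclude that $E = E_T \ast E_\CC$ is treeable. For the parenthetical cases, I would note only that every finite or smooth countable Borel equivalence relation is hyperfinite, and every hyperfinite equivalence relation is treeable (realize it by a Borel $\Z$-action and take a line graph, or a path on the finite orbits, giving an acyclic graphing).

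There is no real obstacle at this level: the entire substance lies in \cref{Borel_Stallings}. If I had to single out the one step that is not pure bookkeeping, it is the appeal to the two structural facts about treeability --- closure under Borel reducibility and closure under free products --- both of which I would cite rather than reprove, as they are standard in the theory of countable Borel equivalence relations.
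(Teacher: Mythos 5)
Your proposal is correct and follows the paper's own route: the paper derives this corollary immediately from \cref{Borel_Stallings} applied to $\CC = \Cthin'_G$ (whose hypotheses are verified exactly as you indicate), together with the standard facts that treeability passes down under Borel reducibility and is preserved under free products. Your write-up simply makes explicit the bookkeeping the paper leaves implicit.
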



\begin{remark}\label{remark:maximal_nonnested_collection}
	Because the non-nestedness graph $\NNG$ on the set $\Cthin_G$ of thin cuts is locally finite by \cref{non-nestedness_is_locally_finite}, there is a Borel maximal $\NNG$-independent set $\CC \subseteq \Cthin_G$; indeed, by \cite{KST}*{Proposition 4.5} this graph admits a countable Borel coloring, using which one easily constructs a Borel maximal independent set. By maximality, $\CC$ must be self-dual and it is chain-vanishing by \cref{C_k_has_chain-vanishing}, so \cref{Borel_Stallings} applies to $\CC$.
\end{remark}


We devote the rest of this section to the proof of this theorem, so let $X$, $E$, $G$, and $\CC$ be as in its hypothesis.

The Luzin--Novikov uniformization theorem, again, gives a Borel function $\pi : X \to \CC$ that is a reduction of $E$ to $\tilde{E}$, i.e. for each point $x$, $\pi(x)$ is a cut $C \in \CC$ with $x \in [C]_E$. Let $E_\CC$ be the $\pi$-pullback of the equivalence relation $\sim_\CC$.

We now define the graph $T_\CC$ similarly to \cref{subsec:tree}, more precisely, for each $C \in \CC$, we put the edge $(C, C^*) \in \TC_\CC$ as well as its inverse. The tree $\TC_\CC$ described in \cref{subsec:tree} is actually defined on $\CC / \sim_\CC$, but it is easy to see that $T_\CC$ is a lift of $\TC_\CC$ to $\CC$. Moreover, the natural projection map $\CC \onto \CC / \sim_\CC$ induces a graph homomorphism $T_\CC \to \TC_\CC$ that is injective on (the edges of) $T_\CC$, so $T_\CC$ is acyclic. Let $\CC_0 \defeq \pi(X)$.

Next, we apply the edge sliding argument as in either \cite{JKL}*{Proposition 3.3(i)} or \cite{Gaboriau:cost} and get an acyclic graph $T_{\CC_0}$ whose projection to $\CC_0 / \sim_\CC$ is a treeing of $\tilde{E} \rest{\CC_0} / \sim_\CC$.

Let $T$ be the $\pi$-pullback of $T_{\CC_0}$, i.e. $x T y \defequiv \pi(x) T_{\CC_0} \pi(y)$. It follows from its definition that the projection $T_{E_\CC}$ of $T$ under quotient map $X \mapsto X / E_\CC$ is a treeing of $E/E_\CC$. This implies that $E = E_T \ast E_\CC$, finishing the proof of the theorem. \hfill \qed(\cref{Borel_Stallings})


\bigskip

\begin{bibdiv}
	\begin{biblist}
		\bibselect{"\LatexDef/refs"}
	\end{biblist}
\end{bibdiv}

\end{document}